\documentclass[12pt,reqno]{amsart}

\usepackage{amssymb,mathrsfs}
\usepackage{enumitem}
\usepackage[colorlinks=true,linkcolor=blue,citecolor=blue,urlcolor=blue]{hyperref}
\usepackage[margin=1in]{geometry}

\numberwithin{equation}{section}
\newtheorem{theorem}{Theorem}[section]
\newtheorem{proposition}[theorem]{Proposition}
\newtheorem{lemma}[theorem]{Lemma}
\newtheorem{remark}[theorem]{Remark}
\theoremstyle{definition}
\newtheorem{setup}[theorem]{Setup}

\newcommand{\Sc}{\operatorname{Scal}}
\newcommand{\Ahat}{\widehat A}
\newcommand{\ch}{\operatorname{ch}}
\newcommand{\Str}{\operatorname{Str}}
\newcommand{\Tr}{\operatorname{Tr}}
\newcommand{\tr}{\operatorname{tr}}
\newcommand{\str}{\operatorname{str}}
\newcommand{\supp}{\operatorname{supp}}
\newcommand{\dist}{\operatorname{dist}}
\newcommand{\Deck}{\operatorname{Deck}}
\newcommand{\cR}{\mathcal R}
\newcommand{\cS}{\mathcal S}
\newcommand{\cW}{\mathscr W}

\title[Nonexistence of complete metrics with positive scalar curvature]
{Nonexistence of complete metrics with uniformly positive scalar curvature}
\author{Tsz-Kiu Aaron Chow}
\address{Department of Mathematics, Hong Kong University of Science and Technology, Hong Kong S.A.R., China}
\email{\href{chowtka@ust.hk}{chowtka@ust.hk}}
\thanks{T.-K. A. C. is supported by the Croucher Foundation Start-up Grant and the HKUST New Faculty Start-up Grant.}
\keywords{Positive scalar curvature, Dirac operators, covering spaces,
relative characteristic forms}

\begin{document}

\begin{abstract}
Let $X$ be a connected oriented even-dimensional manifold whose
universal cover is spin.  We prove that $X$ admits no complete metric
with uniformly positive scalar curvature when either infinite relative
$K$-area or a relative cohomological condition holds along compact sets
escaping to infinity in a fixed open subset with compact complement.
The proof compares twisted Dirac operators on spin covers of compact
manifolds with boundary and combines a heat-kernel argument on
fundamental domains with a long-neck estimate.
\end{abstract}

\maketitle

\section{Introduction}

In \emph{Four lectures on scalar curvature}, Gromov formulates two
obstructions to complete metrics of positive scalar curvature in terms
of relative $K$-area and relative cohomology
\cite[\S4.7]{GromovFourLectures}.  His Theorems~5a and~5b concern data
supported in a fixed compact set
\cite[Theorems~5a and~5b]{GromovFourLectures}.  In Conjecture~6a,
Gromov allows the support to escape to infinity and conjectures the
nonexistence of complete metrics with uniformly positive scalar
curvature \cite[Conjecture~6a]{GromovFourLectures}.  In this paper,
we prove this conjecture.

Let $X^n$ be a connected oriented manifold of even dimension
$n=2k\geq2$, let $X^\circ\subset X$ be open with compact complement,
and let $X_i^\circ\subset X^\circ$ be compact sets which escape every
compact subset of $X$.

\begin{theorem}\label{thm:main}
Suppose that the universal cover of $X$ is spin and that one of the
following alternatives holds.
\begin{enumerate}[leftmargin=2.8em]
\renewcommand{\labelenumi}{\textup{(K)}}
\renewcommand{\theenumi}{\textup{(K)}}
\item\label{alt:K}
The relative $K$-areas of $X^\circ$ with respect to
$X^\circ\setminus X_i^\circ$ are infinite for every $i$.

\renewcommand{\labelenumi}{\textup{(H)}}
\renewcommand{\theenumi}{\textup{(H)}}
\item\label{alt:H}
There are classes
\[
 h_i\in H^2(X^\circ,X^\circ\setminus X_i^\circ;\mathbb R),
 \qquad h_i^k\ne0,
\]
whose pullbacks to the universal cover
$p^\circ:\widetilde X^\circ\to X^\circ$ vanish in
\[
 H^2\!\left(\widetilde X^\circ,
 \widetilde X^\circ\setminus(p^\circ)^{-1}(X_i^\circ);\mathbb R\right).
\]
\end{enumerate}
Then $X$ admits no complete metric satisfying $\Sc\geq\sigma>0$.
\end{theorem}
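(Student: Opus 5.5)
We argue by contradiction: fix a complete metric $g$ on $X$ with $\Sc\geq\sigma>0$ and derive a contradiction from either alternative. The plan is to reduce everything to an index computation on a compact manifold with boundary, $M_i\subset X^\circ$, containing $X_i^\circ$. We choose $M_i$ so that its boundary lies at distance at least $L$ from $X_i^\circ$, with $L$ large compared with $\sigma^{-1/2}$. This is possible because $X_i^\circ$ escapes to infinity, the complement of $X^\circ$ is compact, and $g$ is complete, so such a neck of length $L$ around $X_i^\circ$ inside $X^\circ$ can be arranged for $i$ large.

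\emph{Twisting bundles.} Under \ref{alt:K}, infinite relative $K$-area provides, for every $\epsilon>0$, a pair of Hermitian bundles $E_0,E_1$ on $X^\circ$ with connections. They are identified with compatible connections outside a compact subset of $X_i^\circ$, they have curvature of norm less than $\epsilon$, and their relative Chern character is nonzero. Under \ref{alt:H}, we represent $h_i$ by a closed $2$-form $\omega$ supported in a compact subset of $X_i^\circ$. We use the vanishing of the pullback to write $p^*\omega=d\eta$ with $\eta$ vanishing away from $p^{-1}(X_i^\circ)$. We then form the trivial line bundle $\widetilde L_t$ on $\widetilde M_i$ with connection $d+\sqrt{-1}\,t\eta$, whose curvature is $t\,p^*\omega$, compared against the trivial flat bundle. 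Scaling $\omega$ by a small factor makes the curvature small, while $\ch(L_t)-1$ has top part a nonzero multiple of $t^k\omega^k$, which is nonzero because $h_i^k\neq0$.

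\emph{Index on covers.} Since $X$ need not be spin, we work on the spin universal cover $\widetilde M_i$ with its deck group $\Gamma$. We consider the twisted Dirac operators $D_{E_0}$ and $D_{E_1}$ (respectively $D_{L_t}$ and $D$) on $\widetilde M_i$. Away from the support, the two operators agree near $\partial\widetilde M_i$. We compare them via a relative (super)trace of heat kernels restricted to a fundamental domain $F$,
\[
\int_F \str\bigl(e^{-tD_1^2}(x,x)-e^{-tD_0^2}(x,x)\bigr)\,dx .
\]
As $t\to0$, local index theory shows this tends to $\int_{M_i}\Ahat(X)\wedge(\ch E_1-\ch E_0)$. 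This is a relative characteristic number of the compact pair. In case \ref{alt:H} it reduces to the top term $c\,t^k\int\omega^k$; here we need to arrange pairing against a relative cycle so that it is nonzero, and we use that $\Ahat$ has degree-zero part $1$ together with small curvature to dominate the other terms. As $t\to\infty$, Lichnerowicz together with small curvature give $D_j^2\geq\sigma/4-C\epsilon>0$ away from the boundary. The boundary effects are controlled by finite propagation speed and the long neck: the heat kernels of $D_0$ and $D_1$ agree up to errors of size $e^{-cL^2/t}$ near the boundary. This gives exponential decay in $t$ of the difference, up to $O(e^{-cL})$ errors. The derivative in $t$ of the relative trace is $-\int_F\str(D_1^2e^{-tD_1^2}-D_0^2e^{-tD_0^2})$. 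We bound this using the spectral gap and $\Gamma$-invariance, in the style of Atiyah's $L^2$-index. The conclusion is that the small-time limit is at most $O(\epsilon)+O(e^{-cL})$ in absolute value, contradicting the fact that it is a fixed nonzero number once $\epsilon\to0$ and $L\to\infty$.

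\emph{Main obstacle.} The hardest step is the long-neck estimate. We need a uniform bound on the relative heat trace over the fundamental domain, independent of $\Gamma$ and of the geometry of $\partial M_i$. The approach is to replace $M_i$ by the complete manifold $\widetilde X$ itself, with bundles extended trivially, and to use finite propagation speed (via the wave equation representation of $e^{-tD^2}$) to localize. We must also make sure the nonzero relative characteristic number is not destroyed by normalization. In case \ref{alt:K} this means choosing the pair with $\epsilon$ small while keeping $\int\Ahat\wedge\ch$ bounded below. In case \ref{alt:H} we scale $t$ against $\epsilon$, using that $t^k\int\omega^k$ grows polynomially while the curvature grows linearly. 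The first thing we would try is to work with the normalized bundle $\epsilon^{-1}$-rescaled forms and track constants.
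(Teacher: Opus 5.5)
\textbf{The gap is in the analytic step that should turn the relative heat supertrace into a contradiction.}

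You never impose a boundary condition on $\partial\widetilde M_i$. Without a self-adjoint realization of $D_1\oplus D_0$ that is odd for the grading, $\int_F\str(e^{-\tau D_1^2}-e^{-\tau D_0^2})$ is not independent of $\tau$. Bounding its derivative "by the spectral gap" also has nothing to work with, because there is no global gap: the integrated Weitzenb\"ock formula on a manifold with boundary contains $\int_{\partial}\frac H2|u|^2$, and the mean curvature of $\partial M_i$ is uncontrolled.

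Finite propagation speed does not rescue this. It only localizes when $\tau\ll L^2$, since the error $e^{-cL^2/\tau}\to1$ as $\tau\to\infty$, while local positivity only gives decay when $\tau\gg1$. Balancing at $\tau\sim L$ gives at best $|\int_{M_i}\Ahat(\ch E_1-\ch E_0)|\le C_ie^{-cL_i}$. The constant $C_i$ involves $\operatorname{vol}(M_i)$, the rank of the bundles, and the local geometry. None of these is controlled: $g$ has no bounded geometry, and $\operatorname{vol}(M_i)$ can grow arbitrarily fast in $L_i$.

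The left side is not a fixed nonzero number either. It changes with $i$ and with the almost-flat bundles. In case (H) it is $\int_{M_i}\Ahat(e^{t\omega}-1)$, where the curvature bound forces $t\|\omega\|_\infty\lesssim\sigma$, so it can be arbitrarily small. Rescaling cannot help, because small curvature forces small $t$. Also, for fixed $i$ you cannot let $L\to\infty$ when $X^\circ\ne X$, since $M_i\subset X^\circ$ gives $L\le\dist(X_i^\circ,X\setminus X^\circ)$.

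Your fallback of working on $\widetilde X$ fails too. The bundles in (K) live on $X^\circ$ and need not extend over the compact set $X\setminus X^\circ$. In (H) the primitive lives on $\widetilde X^\circ$, which need not be the preimage of $X^\circ$ in $\widetilde X$.

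\textbf{What is missing is an exact vanishing theorem that does not depend on volume, rank, or boundary geometry.}

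The paper first couples the two operators by the transmission condition $u_0=\chi u_1$ with $\chi=c_0(\nu_0)U^{-1}\varepsilon_1$. This is equivalent to doubling $M$ and gluing $E_1$ to $E_0$. It makes $D_1\oplus D_0$ self-adjoint and odd, so $\Str_F(e^{-\tau D^2})$ is exactly constant in $\tau$, with small-time limit $\int_M\Ahat(\ch E_1-\ch E_0)$.

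It then deforms by a Callias term $r\Sigma^\psi$, built from the identification $U$ over $M\setminus C$. The function $\psi$ is constructed on the neck so that $|d\psi|\le\psi^2+\sigma/16$ and $\psi>-H/2$ on $\partial M$. This absorbs the mean curvature and gives $B_1^2\ge\sigma/16$ on the whole cover as soon as $\dist(C,\partial M)>2\pi/\sqrt\sigma$. Independence in $r$ then forces exact vanishing.

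Such a coupling needs a $\Gamma$-equivariant identification of the two twisted bundles outside $C$, and this is where (H) is hard. The lifts of your connection $d+\sqrt{-1}\,t\eta$ form only a projective action. On $q^{-1}(M\setminus C)$ they differ from the flat lifts by constant phases that depend on the component. The paper resolves this with the Bass--Serre common cover.

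Finally, in (K) infinite relative $K$-area gives a nonzero relative Chern number, not a nonzero $\Ahat$-twisted integral. The paper passes from one to the other using Adams operations as in B\"ar--Hanke, and your proposal omits this step.
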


Related Dirac-operator arguments for complete manifolds have been
developed in several settings.
In the $K$-area setting, Cecchini--Zeidler obtain an obstruction on complete
spin manifolds from almost-flat pairs with nonzero relative index
\cite[Corollary~3.10]{CecchiniZeidlerCallias}, while Zhang proves a
corresponding result for bundles which are trivial with trivial
connection outside a compact set
\cite[Theorem~2.3]{ZhangDeformedDirac}. 
Dirac-operator proofs of
long-neck and band-width estimates were developed by Zeidler and
Cecchini \cite{ZeidlerBandWidth,CecchiniLongNeck} and sharpened by
Cecchini--Zeidler \cite{CecchiniZeidlerComparison}.  A different
approach to such distance estimates, based on quantitative $K$-theory
and higher index theory, was developed by Guo--Xie--Yu
\cite{GuoXieYuQuantitative}.
There are also cohomological versions when only the universal cover is assumed to
be spin.  Gromov proves a closed-manifold result for degree-one and
degree-two classes whose pullbacks to the universal cover vanish and
states a complete-manifold analogue using compactly supported classes
\cite[\S10]{GromovMetricInequalities}.  Zhang treats a
noncompact case arising from a map to $\mathbb{CP}^k$ whose lifted
K\"ahler form is exact
\cite[\S2.2]{ZhangDeformedDirac}.

There are two points which prevent a direct reduction of
Theorem~\ref{thm:main} to these fixed-support results.  First,
$X^\circ$ need not be complete, and the universal cover
$\widetilde X^\circ$ in assumption~\ref{alt:H} need not agree with
the inverse image of $X^\circ$ in the universal cover of $X$.
Second, even when two twisting data agree outside a compact subset
$C$ downstairs, their lifts agree outside $\pi^{-1}(C)$, which is
generally noncompact.  Thus the relative index theorem for operators
which agree outside a compact set
\cite{GromovLawsonComplete} does not directly apply.

We reduce both assumptions to compact domains
$M_i\subset X^\circ$ containing compact subsets
$C_i\Subset M_i^\circ$ with
\[
 \dist(C_i,\partial M_i)\longrightarrow\infty.
\]
Under assumption~\ref{alt:K}, relative $K$-area gives almost-flat
Hermitian bundles on $M_i$, parallel-isomorphic outside $C_i$, with
a nonzero characteristic integral.  Under
assumption~\ref{alt:H}, relative de Rham theory gives a compactly
supported closed two-form $\omega_i$ on $M_i$ and a one-form
$\alpha_i$ on its universal cover satisfying
\[
 d\alpha_i=q_i^*\omega_i,\qquad
 \alpha_i=0\quad\text{on }q_i^{-1}(M_i\setminus C_i),\qquad
 \int_{M_i}\omega_i^k\ne0.
\]
Small multiples of $\alpha_i$ then give line bundle connections with
arbitrarily small curvature.  A further covering construction is
needed because the maps preserving these connections may differ from
those for the trivial connection by constants depending on the
component of $q_i^{-1}(M_i\setminus C_i)$. This is carried out using Bass--Serre theory in Appendix~\ref{app:common-cover}.

The common analytic input is the vanishing theorem proved in
Section~\ref{sec:analytic}.  It compares two twisted Dirac operators
on spin covers of a compact manifold with boundary whose twisting
data agree outside a compact subset of the interior.  If the twisting
curvatures are sufficiently small and this subset is sufficiently far
from the boundary, the associated characteristic integral vanishes.
Applied to the comparison domains above, this contradicts the
nonvanishing obtained from either assumption.

Section~\ref{sec:analytic} proves the vanishing theorem for twisted
Dirac operators.  Sections~\ref{sec:K} and~\ref{sec:H} prove
Theorem~\ref{thm:main} under assumptions~\ref{alt:K} and~\ref{alt:H},
respectively.  Appendix~\ref{app:localization} contains the elementary
constructions of comparison domains and smooth functions, while
Appendix~\ref{app:common-cover} gives the covering and bundle
construction needed in the relative cohomology case.

\textit{Disclosure of AI use.}
The author used ChatGPT (GPT-5.6 Sol) and Claude (Fable 5) as supporting
tools in the preparation of this manuscript, including for literature
searches, exploration of ideas, and symbolic calculations.
In particular, the use of Bass--Serre theory in Appendix~\ref{app:common-cover}
for the covering construction under assumption~\ref{alt:H} was suggested
by ChatGPT.  All mathematical arguments and calculations
were independently verified by the author, who takes full responsibility
for the mathematical content of the paper.


\vspace{0.5cm}

\section{A vanishing theorem for twisted Dirac operators}\label{sec:analytic}

In this section, we prove the vanishing theorem used in
Sections~\ref{sec:K} and~\ref{sec:H}.  We compare two twisted Dirac
operators on spin covers of a compact manifold $M$ with boundary whose
twisting data are identified over $M\setminus C$.  If the twisting
curvatures are sufficiently small and $C$ is sufficiently far from
$\partial M$, the corresponding characteristic integral vanishes.

We use the conventions
\[
 \ch(E)=\Tr\exp\!\left(\frac{\sqrt{-1}\,R^E}{2\pi}\right),\qquad
 \Ahat(TM)=\det{}^{1/2}\!\left(
 \frac{R^{TM}/(4\pi\sqrt{-1})}
 {\sinh\bigl(R^{TM}/(4\pi\sqrt{-1})\bigr)}
 \right).
\]
For a Hermitian bundle $E$ with unitary connection, we use $\|R^E\|_g =\sup_{x,\,|v|=|w|=1} \|R_x^E(v,w)\|_{\mathrm{op}}$ as in \cite[Lemma~5]{BaerHankeKCowaist}.

\subsection{Setup and statement}

\begin{setup}\label{setup:data}
Let $M^n$ be a compact connected oriented Riemannian manifold of even
dimension with $\partial M\ne\varnothing$, and let
$C\Subset M^\circ$ be compact.  Let $\Gamma$ be a countable discrete
group.  We assume the following data throughout this section.
\begin{enumerate}[label=\textup{(S\arabic*)},ref=\textup{(S\arabic*)},
                  leftmargin=3.4em]
\item\label{setup:covering}
For $a=1,0$, let $\pi_a:Z_a\to M$ be a principal
$\Gamma$-bundle with the pulled-back metric and orientation.  The
possibly disconnected manifold $Z_a$ is spin, and $E_a\to Z_a$ is a
finite-rank Hermitian bundle with unitary connection.  Let $S_{Z_a}$
be the spinor bundle of $Z_a$.  We write
\[
 \cS_a=S_{Z_a}\otimes E_a=\cS_a^+\oplus\cS_a^- ,
 \qquad D_a=c_a\circ\nabla^a,
\]
and let $\varepsilon_a$ be the chirality operator on $\cS_a$.

\item\label{setup:lifts}
For every $\gamma\in\Gamma$, there is a smooth unitary lift
$T^a_\gamma:\cS_a\to\cS_a$ of the deck transformation $\gamma:Z_a\to Z_a$.
For every smooth vector field $v$ on $Z_a$ and smooth section $u$,
\begin{equation}\label{eq:T-properties}
 \begin{gathered}
 T^a_\gamma\varepsilon_a=\varepsilon_aT^a_\gamma,\qquad
 T^a_\gamma c_a(v)=c_a(\gamma_*v)T^a_\gamma,\qquad
 \nabla^a_{\gamma_*v}(T^a_\gamma u)
 =T^a_\gamma(\nabla^a_vu).
 \end{gathered}
\end{equation}

\item\label{setup:exterior}
There is a $\Gamma$-equivariant, orientation-preserving isometry
\[
 \Phi:\pi_0^{-1}(M\setminus C)\longrightarrow
 \pi_1^{-1}(M\setminus C),\qquad \pi_1\circ\Phi=\pi_0.
\]
On these regions there are smooth unitary bundle maps
$\widehat\Phi:S_{Z_0}\to S_{Z_1}$ and $U^E:E_0\to E_1$ satisfying
\[
 \operatorname{pr}_{S_{Z_1}}\circ\widehat\Phi
 =\Phi\circ\operatorname{pr}_{S_{Z_0}},\qquad
 \operatorname{pr}_{E_1}\circ U^E
 =\Phi\circ\operatorname{pr}_{E_0}.
\]
Here $\widehat\Phi$ preserves chirality, Clifford multiplication,
and the spin connection, and $U^E$ is parallel.

\item\label{setup:compatibility}
For every $\gamma\in\Gamma$, the map $U=\widehat\Phi\otimes U^E$ satisfies
\begin{equation}\label{eq:exterior-compatibility}
 T^1_\gamma U=UT^0_\gamma.
\end{equation}
\end{enumerate}
\end{setup}

The identities in \eqref{eq:T-properties} give $D_aT^a_\gamma=T^a_\gamma D_a$.
We recall the Weitzenb\"ock formula
\cite[Theorem~II.8.17]{LawsonMichelsohn} 
\begin{equation}\label{eq:SL}
 D_a^2=(\nabla^a)^*\nabla^a+\frac{\Sc}{4}+\cR^{E_a},
\end{equation}
where, for a local orthonormal frame $e_1,\ldots,e_n$,
\begin{equation}\label{eq:twisting-curvature-endomorphism}
 \cR^{E_a}=\sum_{i<j}c_a(e_i)c_a(e_j)R^{E_a}(e_i,e_j).
\end{equation}
We use the norm $\|\cR^{E_a}\|=\sup_{z\in Z_a}
\|\cR^{E_a}_z\|_{\mathrm{op}}$.
For smooth vector fields $v,w$, the identities in
\eqref{eq:T-properties} imply
\[
\begin{aligned}
 T^a_\gamma R^{\cS_a}(v,w)
 &=R^{\cS_a}(\gamma_*v,\gamma_*w)T^a_\gamma,\\
 T^a_\gamma\bigl(R^{S_{Z_a}}(v,w)\otimes1\bigr)
 &=\bigl(R^{S_{Z_a}}(\gamma_*v,\gamma_*w)\otimes1\bigr)T^a_\gamma.
\end{aligned}
\]
Since $R^{\cS_a}=R^{S_{Z_a}}\otimes1+1\otimes R^{E_a}$, the above two identities imply
\begin{equation}\label{eq:twisting-curvature-under-gamma}
 T^a_\gamma\bigl(1\otimes R^{E_a}(v,w)\bigr)
 =\bigl(1\otimes R^{E_a}(\gamma_*v,\gamma_*w)\bigr)T^a_\gamma.
\end{equation}
Taking traces of its powers and using
$\tr_{\cS_a}(1\otimes(R^{E_a})^k)
=\operatorname{rk}(S_{Z_a})\tr_{E_a}((R^{E_a})^k)$ shows that
$\gamma^*\ch(E_a)=\ch(E_a)$.  Thus $\ch(E_a)$ descends to $M$.
The descended forms agree on $M\setminus C$, since $U^E$ is parallel.

\begin{theorem}\label{thm:vanishing}
Assume Setup~\ref{setup:data}. Suppose that
\[
 \Sc_M\geq\sigma>0,\qquad
 \|\cR^{E_a}\|\leq\frac{\sigma}{8}\quad(a=1,0),\qquad
 \dist(C,\partial M)>\frac{2\pi}{\sqrt\sigma}.
\]
Then
\begin{equation}\label{eq:vanishing}
 \int_M\Ahat(TM)\bigl(\ch(E_1)-\ch(E_0)\bigr)=0.
\end{equation}
\end{theorem}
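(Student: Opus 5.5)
We combine two ingredients: a local-index computation on fundamental domains, and a long-neck localization that kills the boundary contributions. First, cut off away from the boundary. Choose a smooth function $\phi:M\to\mathbb R$ with $\phi\equiv0$ near $C$ (or one of the level-set conventions from Appendix~\ref{app:localization}), $\phi$ large near $\partial M$, and $|d\phi|<\sqrt\sigma/2$ roughly. This is possible because $\dist(C,\partial M)>2\pi/\sqrt\sigma$. Concretely, use the Cecchini--Zeidler long-neck potential $f=\psi(\dist(\cdot,C))$ with $\psi$ solving the Riccati-type equation $\frac{n}{n-1}\cdot\frac{\sigma'}{4}-\psi^2-|\psi'|\ge0$ (up to constants), so that $\psi$ blows up before distance $2\pi/\sqrt\sigma$. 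Lift $f$ to $Z_a$. On $\cS_a$ form the Callias-type operators $B_a=D_a+f\varepsilon_a'$; here one needs an odd involution commuting suitably, which I would obtain by doubling $\cS_a\oplus\cS_a$ with a $\mathbb Z/2$-grading trick so that $f$ acts as a potential. Impose a local boundary condition (chirality-type) at $\partial M$. The Weitzenböck formula \eqref{eq:SL}, with $\Sc/4-\|\cR^{E_a}\|\ge\sigma/8$, and the choice of $f$ give
\[
 \|B_au\|^2\ \ge\ \int\Bigl(\tfrac{\sigma}{8}+f^2-|df|\Bigr)|u|^2+\text{boundary terms}\ \ge\ c\|u\|^2 ,
\]
uniformly. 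This gives a spectral gap for $B_a^2$ on the $\Gamma$-cover, invariant under the lifts $T^a_\gamma$.

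Next, use a $\Gamma$-trace and heat kernel. Since $B_a$ commutes with $T^a_\gamma$, define the $\Gamma$-supertrace $\Str_\Gamma(e^{-tB_a^2})=\int_{F_a}\str\, k_t^a(z,z)\,dz$ over a fundamental domain $F_a$. The spectral gap forces $\Str_\Gamma(e^{-tB_a^2})\to0$ as $t\to\infty$, and it is exactly $0$ for each $t$ by the gap and McKean--Singer for the von Neumann $\Gamma$-index, which vanishes. Consider the difference of the two sides. Via $\Phi$ and $U$, compatible with $T^a_\gamma$ by \eqref{eq:exterior-compatibility}, we may choose fundamental domains with $\Phi(F_0\cap\pi_0^{-1}(M\setminus C))=F_1\cap\pi_1^{-1}(M\setminus C)$, and the operators agree there. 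As $t\to0$, the local index density of each heat kernel converges to $\Ahat(TM)\ch(E_a)$ on interior points. Near the boundary, where $f$ is huge, the two pointwise supertraces coincide up to $O(e^{-c/t})$ by finite propagation speed. Indeed the data there are identified and $f$ appears identically. The integrated boundary contribution therefore cancels in the difference, giving
\[
 0=\lim_{t\to0}\bigl(\Str_\Gamma e^{-tB_1^2}-\Str_\Gamma e^{-tB_0^2}\bigr)=\int_M\Ahat(TM)\bigl(\ch(E_1)-\ch(E_0)\bigr),
\]
after checking that the $f$-terms contribute only exact/identical pieces to the local densities on the region where they differ. They differ only near $C$, where $f\equiv0$.

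The main obstacle is the cancellation in the small-time step. We need the pointwise heat-kernel supertraces to agree up to negligible error on the identified region uniformly on the noncompact cover, and to be $t$-independent in the difference. I would handle it with the standard Duhamel / finite-propagation comparison. Write $e^{-tB^2}$ via $\cos(s\sqrt{B^2})$ functional calculus with compactly supported Fourier transform, so that kernels at points of distance $>R$ from $\pi^{-1}(C)$ agree exactly via $U$. The remaining tail is Gaussian-small, and $\Gamma$-invariance plus compactness of $M$ gives uniform bounds. I would also have to verify that $t$-independence holds for the relative difference. That follows from the derivative being a $\Gamma$-trace of a supercommutator, which vanishes given the boundary condition and the cutoff decay.
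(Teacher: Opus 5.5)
There is a genuine gap at the first step. The per-system Callias operators $B_a=D_a+f\varepsilon_a'$ with a local boundary condition do not exist in a form that carries the information you need.

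For $D_a+f\sigma$ to satisfy $B_a^2=D_a^2+f^2+c_a(df)\sigma$ and to be odd, $\sigma$ must be a parallel self-adjoint involution that anticommutes both with Clifford multiplication and with $\varepsilon_a$. On $\cS_a=S_{Z_a}\otimes E_a$ no such $\sigma$ exists. Indeed, $\sigma\varepsilon_a$ would commute with all Clifford multiplications, hence equal $1\otimes A$, since Clifford products span the spinor endomorphisms in even dimension. Then $\sigma=(1\otimes A)\varepsilon_a$ commutes with $\varepsilon_a$.

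Your doubling $\cS_a\oplus\cS_a$ with grading $\varepsilon_a\oplus(-\varepsilon_a)$ does supply an odd involution, but it is the relative construction applied to the trivial pair $(E_a,E_a)$. Near $\pi_a^{-1}(C)$, where $f=0$, the pointwise supertrace is $\str k^a_t-\str k^a_t=0$, and the small-time density is $\Ahat(TM)(\ch(E_a)-\ch(E_a))=0$. So each side vanishes for trivial reasons, the difference of the two sides gives $0=0$, and $\int_M\Ahat(TM)(\ch(E_1)-\ch(E_0))$ never appears.

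The boundary condition has the same defect. The condition $\varepsilon_ac_a(\nu_a)u=\pm u$ is mapped by $\varepsilon_a$ onto its orthogonal complement, so it is not graded and McKean--Singer does not apply to it. More generally, a single graded twisted Dirac operator in even dimension admits no local elliptic boundary condition (the Atiyah--Bott obstruction). Only the pair $(\cS_1,\cS_0)$, identified near $\partial M$, removes this obstruction.

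The missing idea is that both the potential and the boundary condition must couple the two systems. The paper works on $\cW=\cS_1\sqcup\cS_0$ with grading $\varepsilon_1\oplus(-\varepsilon_0)$.
\begin{itemize}
\item The potential is $\Sigma^\psi(u_1,u_0)=(\psi U\varepsilon_0u_0,\psi U^{-1}\varepsilon_1u_1)$. It swaps the two systems through $U$ and exists only over $\pi^{-1}(M\setminus C)$, which is why $\psi=0$ near $C$.
\item The boundary condition is the transmission condition $u_0=\chi u_1$, which glues $Z_1$ to $-Z_0$ along $\partial Z$.
\item A single fundamental-domain supertrace $H_r(t)=\Str_F(e^{-tB_r^2})$ is shown to be independent of $r$ and $t$.
\item At $r=0$ it is evaluated after a collar deformation on the boundaryless double $Z_1\cup_\Phi(-Z_0)$. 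No boundary terms arise there, and the local density is $\Ahat(TM)(\ch(E_1)-\ch(E_0))$.
\item At $r=1$ the long-neck gap forces the supertrace to zero.
\end{itemize}

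If you replace your $B_a$ by this coupled operator, your outline (gap, McKean--Singer, small-time limit) becomes the paper's argument. A Cecchini--Zeidler relative Dirac bundle, whose sign boundary condition is built from the swap involution, would also work. Also, the potential should be bounded with $H/2+\psi>0$ on $\partial M$ rather than blow up. This keeps $\Sigma^\psi$ a bounded perturbation on the fixed domain $\mathcal D_\chi$ while still controlling the mean-curvature boundary term.
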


We briefly describe the proof strategy.  We impose a self-adjoint
boundary condition on the pair of Dirac operators and consider a family of
Callias operators $B_r=D+r\Sigma^\psi$, $0\leq r\leq1$,
where $\Sigma^\psi$ is a zeroth-order term depending on a function
$\psi$ to be chosen later.  For a fundamental domain $F$, we consider
\[
 H_r(t)=\Str_F(e^{-tB_r^2}).
\]
We prove that $H_r(t)$ is independent of both $r$ and $t$.  After
deforming the metric and connections to product form near the boundary
and doubling, the local index theorem gives
\[
 H_0(t)=\int_M\Ahat(TM)\bigl(\ch(E_1)-\ch(E_0)\bigr).
\]
Finally, the long-neck assumption allows us to choose $\psi$ so that $(B_1^\psi)^2\geq\frac{\sigma}{16}$.
Hence $H_1(t)\to0$ as $t\to\infty$, and the vanishing follows from the
independence of $H_r(t)$ from $r$ and $t$.


\subsection{The boundary condition}\label{subsec:boundary}

Let $Z=Z_1\sqcup Z_0$, $\cW=\cS_1\sqcup\cS_0$,
$\pi=\pi_1\sqcup\pi_0$, and $T_\gamma=T^1_\gamma\sqcup T^0_\gamma$.
The bundle $\cW$ splits as
\[
 \cW^+=\cS_1^+\sqcup\cS_0^-,\qquad
 \cW^-=\cS_1^-\sqcup\cS_0^+.
\]
The chirality operator $\varepsilon$ equals $\varepsilon_1$ on
$Z_1$ and $-\varepsilon_0$ on $Z_0$.
An operator $R$ is called even if $\varepsilon R=R\varepsilon$ and
odd if $\varepsilon R=-R\varepsilon$.  In particular,
$D=D_1\oplus D_0$ is odd. 

Let $\nu_M$ be the inward unit normal of $\partial M$ and $\nu_a$
its lift to $\partial Z_a$.  Then $d\Phi(\nu_0)=\nu_1$.
Define a unitary boundary map by
\begin{equation}\label{eq:chi-definition}
 \chi=c_0(\nu_0)U^{-1}\varepsilon_1:
 \cS_1|_{\partial Z_1}\longrightarrow\cS_0|_{\partial Z_0}.
\end{equation}
Clifford multiplication gives
\begin{equation}\label{eq:chi-boundary-identities}
 \chi^*\chi=1,\qquad
 \chi^*c_0(\nu_0)\chi=-c_1(\nu_1),\qquad
 \chi\varepsilon_1=-\varepsilon_0\chi.
\end{equation}
We impose the boundary condition
\begin{equation}\label{eq:matching-condition}
 u_0=\chi u_1.
\end{equation}

With $\mathrm{II}(X,Y)=-g(\nabla_X\nu_M,Y)$ and
$H=\operatorname{tr}_{T\partial M}\mathrm{II}$, we define the adapted boundary operator to be
\[
 A_a=D_a^\partial
 =-\left(c_a(\nu_a)D_a+\nabla^a_{\nu_a}-\frac H2\right).
\]
Let $A=A_1\oplus A_0$ and
$c(\nu)=c_1(\nu_1)\oplus c_0(\nu_0)$.
For a tangential orthonormal frame $e_1,\ldots,e_{n-1}$ on the boundary,
\[
 \left\{-c_a(\nu_a)\sum_{j=1}^{n-1}c_a(e_j)\nabla^a_{e_j},
 c_a(\nu_a)\right\}=-Hc_a(\nu_a),
 \qquad A_ac_a(\nu_a)=-c_a(\nu_a)A_a.
\]
The covers $Z_a$ are complete as metric spaces and have a uniform collar.
The metrics on them are given by pullbacks from the compact base, and the maps $T^a_\gamma$
transport the connection data between translated charts.  Thus the
geometric hypotheses of \cite[Proposition~9.2]{BaerBandara} hold,
and each $A_a$ has a self-adjoint closure, denoted again by $A_a$.

\begin{lemma}\label{lem:matching-boundary}
We have $A_0\chi=-\chi A_1$.  Moreover,
\[
 \mathcal B_\chi
 =\{(u_1,\chi u_1):u_1\in\operatorname{Dom}(|A_1|^{1/2})\}
 \subset\operatorname{Dom}(|A_1|^{1/2})\oplus
 \operatorname{Dom}(|A_0|^{1/2})
\]
is $A$-elliptically regular and is its own adjoint boundary condition.
\end{lemma}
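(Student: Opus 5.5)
We first verify the intertwining identity $A_0\chi=-\chi A_1$ by a direct computation, and then identify $\mathcal B_\chi$ with the standard ``transmission'' boundary condition. Near the boundary, $U$ intertwines $D_0$ with $D_1$, $\nabla^0_{\nu_0}$ with $\nabla^1_{\nu_1}$, and $c_0(\nu_0)$ with $c_1(\nu_1)$. Since $\Phi$ is an isometry covering the identity on $M$, the mean curvatures also agree. Hence $U A_0 = A_1 U$ on the boundary. Because $A_1$ anticommutes with $c_1(\nu_1)$, it also anticommutes with $\varepsilon_1$: $D_1$ and $c_1(\nu_1)$ are odd and $\nabla^1_{\nu_1}$ is even, so $c_1(\nu_1)D_1$ and $\nabla^1_{\nu_1}$ are both even. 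It follows that $A_1$ commutes with $\varepsilon_1$ and anticommutes with $c_1(\nu_1)$. Therefore
\[
A_0\chi=A_0c_0(\nu_0)U^{-1}\varepsilon_1=-c_0(\nu_0)A_0U^{-1}\varepsilon_1=-c_0(\nu_0)U^{-1}A_1\varepsilon_1=-c_0(\nu_0)U^{-1}\varepsilon_1A_1=-\chi A_1 .
\]
We also record that $\chi$ is unitary and that, as an odd isomorphism with respect to $\varepsilon$, it maps $\operatorname{Dom}(|A_1|^{s})$ onto $\operatorname{Dom}(|A_0|^{s})$ for all $s$; this follows because $\chi|A_1|=|A_0|\chi$.

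For ellipticity and self-adjointness we use the framework of B\"ar--Bandara. Spectrally, $\chi$ maps $L^2_{[0,\infty)}(A_1)$ to $L^2_{(-\infty,0]}(A_0)$, and more precisely maps the $\lambda$-eigenspace of $A_1$ to the $-\lambda$-eigenspace of $A_0$. Write $u_1=u_1^{\ge}+u_1^{<}$ for the spectral splitting. The boundary condition $u_0=\chi u_1$ can then be written as a graph $\{v+gv\}$, where $v=(u_1^{<},\chi u_1^{\geq})$ lies in the negative spectral subspace $L^2_{(-\infty,0)}(A)\oplus(\text{zero part})$ and $g=\chi$ or $\chi^{-1}$ maps it into the complementary half. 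Concretely, $g(u_1^{<},\ \chi u_1^{\ge}) = (u_1^{\ge},\ \chi u_1^{<})$, with the kernel handled by including it in the domain side. Since $g$ is an isometry that exchanges spectral halves $\lambda\leftrightarrow-\lambda$ and so preserves $|A|^{1/2}$-regularity, B\"ar--Bandara's characterization of elliptically regular boundary conditions as graphs of bounded maps between $\check H$ spaces with $H^{1/2}$ control applies. This gives $A$-elliptic regularity.

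To show that $\mathcal B_\chi$ is its own adjoint, we compute the adjoint boundary condition with respect to the Green pairing $\omega(u,w)=\langle c(\nu)u,w\rangle_{L^2(\partial Z)}$; the adjoint condition is the annihilator of $c(\nu)\mathcal B_\chi$. For $u=(u_1,\chi u_1)$ and $w=(w_1,\chi w_1)$, the identity $\chi^*c_0(\nu_0)\chi=-c_1(\nu_1)$ gives
\[
\langle c_1u_1,w_1\rangle+\langle c_0\chi u_1,\chi w_1\rangle=\langle c_1u_1,w_1\rangle-\langle c_1u_1,w_1\rangle=0,
\]
so $\mathcal B_\chi\subset\mathcal B_\chi^{\mathrm{ad}}$. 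Conversely, if $(w_1,w_0)$ annihilates $c(\nu)\mathcal B_\chi$, then $\langle c_1u_1, w_1-\chi^*w_0\rangle=0$ for every $u_1$ in a dense subspace. Since $c_1$ is unitary, this forces $w_0=\chi w_1$. To conclude, we need the adjoint condition to lie in $\check H$-type spaces so that the regularity $w_1\in\operatorname{Dom}(|A_1|^{1/2})$ follows; this in turn follows from the elliptic regularity already established.

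The main obstacle is the precise matching with the abstract hypotheses of B\"ar--Bandara on noncompact boundaries (Proposition~9.2 was already invoked for self-adjointness of $A_a$), and in particular the treatment of the kernel of $A$ and the $\check H$ versus $H^{1/2}$ spaces in the graph description. We would handle this as in the standard transmission-condition example: gluing $Z_1$ and $Z_0$ along their boundaries via $\Phi$ shows that $\mathcal B_\chi$ is exactly the condition giving the Dirac operator on the doubled manifold.
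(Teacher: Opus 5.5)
Your proof of $A_0\chi=-\chi A_1$ is the paper's computation. One wording slip: you say $A_1$ ``also anticommutes with $\varepsilon_1$'', but it commutes, as you state next and as the chain uses. You should also note that the identity, proved on smooth sections, passes to the self-adjoint closures. This holds because $\chi$ is a smooth unitary bundle map, and it is what justifies $\chi|A_1|=|A_0|\chi$. Your self-adjointness argument is also the paper's. Green's formula with $\chi^*c_0(\nu_0)\chi=-c_1(\nu_1)$ gives $\mathcal B_\chi\subset\mathcal B_\chi^*$, and the converse pairing gives $v_0=\chi v_1$. The needed $|A|^{1/2}$-regularity of elements of $\mathcal B_\chi^*$ comes from elliptic regularity. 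The paper gets $c(\nu)^*(v_1,v_0)\in\operatorname{Dom}(|A|^{1/2})$ and removes $c(\nu)^*$ using $A_ac_a(\nu_a)=-c_a(\nu_a)A_a$.

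The step that is not yet a proof is elliptic regularity.

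\emph{The kernel is misplaced.} Your $v=(u_1^{<},\chi u_1^{\ge})$ contains $\chi$ applied to the $\ker A_1$-component of $u_1$. That lies in $\ker A_0\subset L^2_{[0,\infty)}(A)$. So $v$ is not in the negative spectral subspace on which the graph map of a B\"ar--Ballmann-type criterion must be defined, and ``including the kernel in the domain side'' does not repair this. A correct version is $\mathcal B_\chi=W_+\oplus\{v+gv: v\in L^2_{(-\infty,0)}(A)\cap\operatorname{Dom}(|A|^{1/2})\}$, with $W_+=\{(k,\chi k):k\in\ker A_1\}$ and $g(a,b)=(\chi^{-1}b,\chi a)$. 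On a noncompact boundary, however, $\ker A_1$ may be infinite-dimensional. You would still have to check that the criterion you cite allows an infinite-dimensional $W_+$.

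\emph{The doubling fallback is not available here.} Gluing $Z_1$ to $Z_0$ along $\Phi$ and reading off the doubled Dirac operator fails at this stage. Without the product-collar deformation, which the paper performs only later, the glued metric is not smooth across the seam: its normal derivative jumps by the second fundamental form.

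\emph{The paper's fix.} The paper avoids both issues with one observation you already have the ingredients for. The unitary change of variables $(u_1,u_0)\mapsto(u_1,\chi^{-1}u_0)$ conjugates $A=A_1\oplus A_0$ to $A_1\oplus(-A_1)$, because $\chi^{-1}A_0\chi=-A_1$. It also carries $\mathcal B_\chi$ to the matching condition $\{(v,v)\}$, whose elliptic regularity is \cite[Proposition~7.35]{BaerBandara}. This is the precise form of the ``standard transmission-condition example'' you invoke, and it needs neither kernel bookkeeping nor a smooth double.
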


\begin{proof}
Since $U$ preserves the connection and Clifford multiplication,
$A_0U^{-1}=U^{-1}A_1$.  We also have $A_a\varepsilon_a=\varepsilon_aA_a$.  Hence
\[
 A_0\chi
 =-c_0(\nu_0)A_0U^{-1}\varepsilon_1
 =-c_0(\nu_0)U^{-1}\varepsilon_1A_1
 =-\chi A_1.
\]
This identity extends to the self-adjoint closures.  The spectral
theorem gives
\[
 |A_0|^{1/2}\chi=\chi|A_1|^{1/2},\qquad
 \chi\operatorname{Dom}(|A_1|^{1/2})
 =\operatorname{Dom}(|A_0|^{1/2}).
\]
Under $(u_1,u_0)\mapsto(u_1,\chi^{-1}u_0)$, the adapted operator
becomes $A_1\oplus(-A_1)$ and the boundary condition becomes the
matching boundary condition. Hence
\cite[Proposition~7.35]{BaerBandara} implies that it is $A$-elliptic regular.

For $u,v$ whose boundary values lie in $\mathcal B_\chi$,
Green's formula \cite[Theorem~2.4(iv)]{BaerBandara} gives
\[
 \langle Du,v\rangle-\langle u,Dv\rangle
 =-\sum_{a=0,1}\int_{\partial Z_a}
       \langle c_a(\nu_a)u_a,v_a\rangle
 =-\int_{\partial Z_1}
 \langle(c_1(\nu_1)+\chi^*c_0(\nu_0)\chi)u_1,v_1\rangle=0.
\]
Thus $\mathcal B_\chi\subset\mathcal B_\chi^*$.

Conversely, let $(v_1,v_0)\in\mathcal B_\chi^*$. 
$A$-elliptic regularity gives $c(\nu)^*(v_1,v_0)\in\operatorname{Dom}(|A|^{1/2})$.
Since $A_ac_a(\nu_a)=-c_a(\nu_a)A_a$, the spectral theorem gives
\[
 c_a(\nu_a)\operatorname{Dom}(|A_a|^{1/2})
 =\operatorname{Dom}(|A_a|^{1/2}),
\]
and hence $(v_1,v_0)\in\operatorname{Dom}(|A|^{1/2})$.

By the definition of the adjoint boundary condition, the right-hand
side of Green's formula vanishes for every
$(u_1,\chi u_1)\in\mathcal B_\chi$.  Taking
$u_1\in C_c^\infty(\partial Z_1,S_1)$ and changing variables by
$\Phi$, we obtain
\[
 0
 =-\int_{\partial Z_1}\langle c_1(\nu_1)u_1,v_1\rangle
   -\int_{\partial Z_0}\langle c_0(\nu_0)\chi u_1,v_0\rangle
 =-\int_{\partial Z_1}
   \langle c_1(\nu_1)u_1,v_1-\chi^*v_0\rangle.
\]
Since $c_1(\nu_1)$ is invertible and $u_1$ is arbitrary,
$v_1=\chi^*v_0$, or equivalently $v_0=\chi v_1$.  Therefore
\begin{equation}\label{eq:adjoint-boundary-condition}
 \mathcal B_\chi^*=\mathcal B_\chi.
\end{equation}
\end{proof}

Let
\[
 \mathcal D_\chi
 =\{u\in H^1(Z,\cW):u|_{\partial Z}\in\mathcal B_\chi\}.
\]
Lemma~\ref{lem:matching-boundary},
\cite[Corollaries~7.7, 7.10, and 7.15]{BaerBandara}, and the uniform
local elliptic estimate on lifts of a finite atlas of $M$ give, for
every maximal-domain section with boundary value in $\mathcal B_\chi$,
\begin{equation}\label{eq:D-H1-estimate}
 \|u\|_{H^1}\leq C_0(\|Du\|_{L^2}+\|u\|_{L^2}),
\end{equation}
and
\begin{equation}\label{eq:domain-D-chi}
 \operatorname{Dom}D_\chi
 =\{u\in\operatorname{Dom}D_{\max}:
          u|_{\partial Z}\in\mathcal B_\chi\}
 =\mathcal D_\chi.
\end{equation}
By \cite[Proposition~7.4]{BaerBandara} and
\eqref{eq:adjoint-boundary-condition}, $D_\chi$ is self-adjoint.
We write it simply as $D$.  Green's formula also shows that every
$H^1$ section satisfying \eqref{eq:matching-condition} lies in
$\operatorname{Dom}D_\chi^*=\mathcal D_\chi$.

Let $\psi\in C^\infty(M;\mathbb R)$ vanish on a neighborhood of $C$
and be constant near $\partial M$.  Choose
$C\Subset N\Subset M^\circ$ with $\psi|_N=0$, and use $\psi$ also to denote
 its pullbacks.  Define
\[
 \Sigma^\psi(u_1,u_0)
 =\bigl(\psi U\varepsilon_0u_0,
         \psi U^{-1}\varepsilon_1u_1\bigr)
\]
over $\pi^{-1}(M\setminus C)$, and set it equal to zero over
$\pi^{-1}(N)$.  These definitions agree on the overlap.  Since $U$
is parallel and unitary and preserves chirality,
\[
 \begin{gathered}
 (\Sigma^\psi)^*=\Sigma^\psi,\qquad
 (\Sigma^\psi)^2=\psi^2\operatorname{Id},\qquad
 \varepsilon\Sigma^\psi=-\Sigma^\psi\varepsilon,\qquad
 T_\gamma\Sigma^\psi=\Sigma^\psi T_\gamma,
 \end{gathered}
\]
and
\begin{equation}\label{eq:Sigma-psi-Sobolev}
 \|\Sigma^\psi u\|_{H^k}\leq C_k\|u\|_{H^k},\qquad k\geq0.
\end{equation}

Lastly, for $0\leq r\leq1$, we define a family of Callias operators
\begin{equation}\label{eq:B-family}
 B_r^\psi=D+r\Sigma^\psi,\qquad
 \operatorname{Dom}B_r^\psi=\mathcal D_\chi.
\end{equation}
We write $B_r=B_r^\psi$ when $\psi$ is fixed and understood.

\begin{lemma}\label{lem:B-properties}
There is a constant $C$, independent of $r\in[0,1]$, such that:
\begin{enumerate}[label=\textup{(\roman*)},leftmargin=2.8em]
\item\label{B:H1}
For $u\in\mathcal D_\chi$,
\begin{equation}\label{eq:D-graph-estimate}
 \|u\|_{H^1}\leq C(\|B_ru\|_{L^2}+\|u\|_{L^2}).
\end{equation}
\item\label{B:selfadjoint}
$B_r$ is self-adjoint on the fixed domain $\mathcal D_\chi$.
\item\label{B:equivariance}
$T_\gamma\mathcal D_\chi=\mathcal D_\chi$ and
$B_rT_\gamma=T_\gamma B_r$ for every $\gamma\in\Gamma$.
\item\label{B:odd}
Chirality preserves $\mathcal D_\chi$ and
$B_r\varepsilon=-\varepsilon B_r$.
\end{enumerate}
\end{lemma}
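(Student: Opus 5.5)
All four items are perturbation statements: the operator $r\Sigma^\psi$ is bounded and symmetric, and $D$ with domain $\mathcal D_\chi$ already has the required properties by Lemma~\ref{lem:matching-boundary} and the estimates \eqref{eq:D-H1-estimate}, \eqref{eq:domain-D-chi}. I will treat each item as a consequence of the corresponding property of $D$ together with the algebraic identities for $\Sigma^\psi$ listed before \eqref{eq:Sigma-psi-Sobolev}.

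For \ref{B:H1}, let $u\in\mathcal D_\chi$. Then $u$ lies in $\operatorname{Dom}D_{\max}$ and its boundary value lies in $\mathcal B_\chi$. By \eqref{eq:D-H1-estimate} and the triangle inequality,
\[
 \|u\|_{H^1}\leq C_0\bigl(\|B_ru\|_{L^2}+r\|\Sigma^\psi u\|_{L^2}+\|u\|_{L^2}\bigr).
\]
Since $|\Sigma^\psi|=|\psi|$ pointwise and $r\le 1$, we have $r\|\Sigma^\psi u\|_{L^2}\le\sup|\psi|\,\|u\|_{L^2}$. This gives \eqref{eq:D-graph-estimate} with $C=C_0(1+\sup|\psi|)$, which does not depend on $r$.

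For \ref{B:selfadjoint}, $D$ is self-adjoint on $\mathcal D_\chi$. Also, $\Sigma^\psi$ is a bounded operator on $L^2$, because it is a smooth bundle endomorphism bounded by $\sup|\psi|$, and it is pointwise self-adjoint. Bounded symmetric perturbations preserve self-adjointness without changing the domain (Kato--Rellich with relative bound $0$, or just $(D+S)^*=D^*+S$ for bounded $S$). Hence $B_r$ is self-adjoint on $\mathcal D_\chi$.

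For \ref{B:equivariance}, the operator $T_\gamma$ is a smooth unitary bundle map covering an isometry, and it lifts the connection. It therefore preserves $H^1$, and it commutes with $D$ in the interior. It remains to check that $T_\gamma$ preserves the boundary condition, that is, $T^0_\gamma\chi=\chi T^1_\gamma$ on $\partial Z_1$. I would verify this factor by factor in $\chi=c_0(\nu_0)U^{-1}\varepsilon_1$:
\begin{itemize}
 \item $\varepsilon_1$ commutes with $T^1_\gamma$ by \eqref{eq:T-properties};
 \item $T^0_\gamma U^{-1}=U^{-1}T^1_\gamma$ by \eqref{eq:exterior-compatibility};
 \item $T^0_\gamma c_0(\nu_0)=c_0(\gamma_*\nu_0)T^0_\gamma=c_0(\nu_0)T^0_\gamma$, because $\nu_0$ is the lift of $\nu_M$ and so is $\Gamma$-invariant.
\end{itemize}
Applying $T_\gamma^{-1}=T_{\gamma^{-1}}$ gives $T_\gamma\mathcal D_\chi=\mathcal D_\chi$. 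Finally, $T_\gamma\Sigma^\psi=\Sigma^\psi T_\gamma$ was already noted, so $B_rT_\gamma=T_\gamma B_r$.

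For \ref{B:odd}, the point is that $\varepsilon$ preserves $\mathcal B_\chi$, i.e. $\varepsilon_0\chi=\chi(-\varepsilon_1)$ with the sign convention $\varepsilon=\varepsilon_1\oplus(-\varepsilon_0)$. Indeed, \eqref{eq:chi-boundary-identities} gives $\chi\varepsilon_1=-\varepsilon_0\chi$. So for $u=(u_1,\chi u_1)$ we get
\[
 \varepsilon u=\bigl(\varepsilon_1u_1,\,-\varepsilon_0\chi u_1\bigr)=\bigl(\varepsilon_1u_1,\,\chi\varepsilon_1u_1\bigr)\in\mathcal B_\chi ,
\]
and $\varepsilon$ clearly preserves $H^1$. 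Oddness of $B_r$ then follows because $D$ is odd (Clifford multiplication anticommutes with chirality) and $\varepsilon\Sigma^\psi=-\Sigma^\psi\varepsilon$.

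The only step needing genuine care is the boundary-compatibility check in \ref{B:equivariance}, specifically the invariance of $\nu_0$ and the use of \eqref{eq:exterior-compatibility}. Everything else is bookkeeping.
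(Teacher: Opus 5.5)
Your proposal is correct and follows the paper's own argument, with the details written out: you treat $r\Sigma^\psi$ as a bounded self-adjoint perturbation of $D$ on $\mathcal D_\chi$ for (i)--(ii), use the intertwining $T^0_\gamma\chi=\chi T^1_\gamma$ obtained from \eqref{eq:exterior-compatibility} for (iii), and use $\chi\varepsilon_1=-\varepsilon_0\chi$ for (iv). One small correction: Setup~\ref{setup:data} does not give $T_\gamma^{-1}=T_{\gamma^{-1}}$, and in Sections~\ref{sec:K} and~\ref{sec:H} the lifts compose only up to signs or phases; instead, invert the identity directly to get $(T^0_\gamma)^{-1}\chi=\chi(T^1_\gamma)^{-1}$, which shows that $T_\gamma^{-1}$ also preserves $\mathcal D_\chi$.
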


\begin{proof}
Since $r\Sigma^\psi$ is bounded and self-adjoint, 
\eqref{eq:D-H1-estimate} gives the uniform estimate and
self-adjointness holds on the same domain.
Equation~\eqref{eq:exterior-compatibility} gives
$T^0_\gamma\chi=\chi T^1_\gamma$, proving \ref{B:equivariance}.
The last identity in \eqref{eq:chi-boundary-identities} shows that
chirality preserves the domain. Both $D$ and $\Sigma^\psi$ anticommute with chirality.
\end{proof}


\subsection{Heat kernels on a fundamental domain}\label{subsec:heat-supertrace}

Choose relatively compact measurable fundamental domains $F_a\subset Z_a$
whose translates partition $Z_a$ up to measure zero, and let
$F=F_1\sqcup F_0$.  Let $1_F$ denote multiplication by the characteristic function of
$F$ on $L^2(Z,\cW)$.  For a fiber endomorphism, its supertrace is defined by
$\str A=\tr(\varepsilon A)$. For a map between fibers, we write
$|A|^2=\tr(A^*A)$.  On $L^2(Z,\cW)$, $\Tr$ denotes the ordinary
trace, and $\|\cdot\|$, $\|\cdot\|_1$, and $\|\cdot\|_2$ denote
the operator, trace, and Hilbert--Schmidt norms, respectively.  For a smoothing
operator $P$ with smooth Schwarz kernel $K_P$, define
\[
 \Str_F(P)=\int_F\str K_P(x,x)\,dx
\]
whenever this integral converges absolutely. 

For $t>0$, define
\[
 E_r(t)=e^{-tB_r^2}.
\]

\begin{lemma}\label{lem:heat-kernel}
For $j=0,1$, $r\in[0,1]$, and $s>0$, the following hold.
\begin{enumerate}[label=\textup{(\roman*)},leftmargin=2.8em]
\item\label{heat:smooth}
 $B_r^jE_r(s)$ has a unique Schwarz kernel
\[
 K_{B_r^jE_r(s)}\in
 C^\infty\!\left(Z\times Z;
 \operatorname{Hom}(\cW_y,\cW_x)\right),
\]
smooth up to the boundary in both variables.
\item\label{heat:deck}
For every $\gamma\in\Gamma$,
\begin{equation}\label{eq:heat-kernel-under-gamma}
 K_{B_r^jE_r(s)}(\gamma x,\gamma y)
 =T_{\gamma,x}K_{B_r^jE_r(s)}(x,y)T_{\gamma,y}^{-1}.
\end{equation}
\item\label{heat:L2}
We have
\begin{equation}\label{eq:heat-operator-bound}
 \|B_r^jE_r(s)\|\leq c_js^{-j/2},
 \qquad c_0=1,\quad c_1=(2e)^{-1/2},
\end{equation}
and
\begin{equation}\label{eq:heat-square-integrals}
 \int_F\!\int_Z|K_{B_r^jE_r(s)}(x,y)|^2\,dy\,dx
 =\int_Z\!\int_F|K_{B_r^jE_r(s)}(x,y)|^2\,dy\,dx<\infty.
\end{equation}
The integrals are bounded uniformly for $r\in[0,1]$ and $s$ in a
compact subinterval of $(0,\infty)$.
\end{enumerate}
\end{lemma}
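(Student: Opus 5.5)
We start from the spectral theorem for the self-adjoint operator $B_r$ of Lemma~\ref{lem:B-properties}\ref{B:selfadjoint}. Since $\lambda\mapsto|\lambda|^je^{-s\lambda^2}$ is bounded by $c_js^{-j/2}$, the operator bound \eqref{eq:heat-operator-bound} follows at once. Indeed, $\sup_\lambda\lambda e^{-s\lambda^2}=(2es)^{-1/2}$. The rest of the proof turns this operator-level control into kernel-level control.

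\emph{Smoothness of the kernel.} Write $B_r^jE_r(s)=E_r(s/3)\,B_r^jE_r(s/3)\,E_r(s/3)$. Iterating the graph estimate \eqref{eq:D-graph-estimate} together with \eqref{eq:Sigma-psi-Sobolev} gives elliptic regularity up to the boundary. Concretely, for each $m$, $\operatorname{Dom}(B_r^m)\subset H^m(Z,\cW)$ with $\|u\|_{H^m}\le C_m(\|B_r^mu\|+\|u\|)$, and $C_m$ is independent of $r$. To prove this, we apply the higher regularity statements of \cite{BaerBandara} for the elliptically regular condition $\mathcal B_\chi$ of Lemma~\ref{lem:matching-boundary} on lifts of a finite atlas of $M$. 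These lifts are uniform because of the deck maps $T_\gamma$. We may treat $r\Sigma^\psi$ as a bounded perturbation, since it is smooth, commutes with the boundary structure, and is bounded in each $H^k$. Consequently $E_r(s/3)$ maps $L^2$ into $\bigcap_mH^m$, with norms bounded by $C_m\sup_\lambda(1+|\lambda|^m)e^{-s\lambda^2/3}$.

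By uniform Sobolev embedding on charts, for each $x\in Z$ and each multi-index, the evaluation functional $u\mapsto\partial^\alpha(E_r(s/3)u)(x)$ is bounded on $L^2$, uniformly in $x$. The Riesz representation then yields an $L^2$ section $k_x(y)$. The same argument applied on the right via the adjoint, using $E_r^*=E_r$, produces a kernel that is smooth in both variables up to the boundary. Uniqueness is automatic, since a continuous kernel is determined by the operator.

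\emph{Equivariance \eqref{eq:heat-kernel-under-gamma}.} By Lemma~\ref{lem:B-properties}\ref{B:equivariance}, $T_\gamma$ is a unitary operator preserving $\mathcal D_\chi$ and commuting with $B_r$, so it commutes with every bounded Borel function of $B_r$. Writing $T_\gamma PT_\gamma^{-1}=P$ at the level of kernels and using $(T_\gamma u)(\gamma x)=T_{\gamma,x}u(x)$ gives the identity.

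\emph{Square integrals \eqref{eq:heat-square-integrals}.} Fubini applies to the nonnegative integrand, and equivariance gives $|K(\gamma x,\gamma y)|=|K(x,y)|$. Since the translates $\gamma F$ partition $Z$, we can write
\[
\int_F\!\int_Z|K|^2=\sum_\gamma\int_F\!\int_{\gamma F}|K|^2=\sum_\gamma\int_{\gamma^{-1}F}\!\int_F|K|^2=\int_Z\!\int_F|K|^2 .
\]
For finiteness, note that $\int_Z|K(x,y)|^2dy=\|k_x\|_{L^2}^2$ is the squared norm of the evaluation functional from the previous step. This is bounded uniformly in $x$ by $C(s)$, with $C(s)$ locally bounded in $s>0$ and independent of $r\in[0,1]$. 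Since $F$ is relatively compact and hence has finite volume, integrating over $F$ gives a finite bound.

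\emph{Main obstacle.} The main difficulty is the uniform higher-order regularity up to the boundary for the nonlocal matching condition $u_0=\chi u_1$. The approach is to transfer the problem via $(u_1,u_0)\mapsto(u_1,\chi^{-1}u_0)$ to a matching condition on the doubled geometry, as in the proof of Lemma~\ref{lem:matching-boundary}. This doubled geometry is locally a standard transmission problem, which is elliptic in the sense of \cite{BaerBandara}. We expect to obtain $H^m$ regularity either by citing their higher regularity results or by an induction with tangential difference quotients in a collar. Uniformity in $r$ and across sheets of the cover should then follow from the compactness of $M$, together with the fact that all the data are pulled back and intertwined by $T_\gamma$.
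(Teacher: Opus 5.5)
Your proposal is correct and follows essentially the paper's route: the spectral bound, uniform higher regularity up to the boundary for the local matching condition $u_0=\chi u_1$ on lifted (paired) charts, a kernel $K(x,y)=P_xQ_y^*$ built from smooth evaluation maps, equivariance from $T_\gamma$ commuting with functions of $B_r$, and the bound $\int_Z|K(x,y)|^2\,dy\le d\|Q\|^2\|P_x\|^2$. The only real difference is that you get the equality of the two double integrals from the deck tiling, which is the argument the paper uses in Lemma~\ref{lem:trace-identities}\ref{trace:factors}, instead of from $K(y,x)=K(x,y)^*$. One small correction: $\Sigma^\psi$ does not preserve the boundary condition, since on $\partial Z$ it equals $-\psi c(\nu)$ and sends $u_0=\chi u_1$ to $u_0=-\chi u_1$. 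Your bootstrap does not need this, though: it only uses $Du=B_ru-r\Sigma^\psi u$, \eqref{eq:Sigma-psi-Sobolev}, and the $\infty$-regularity of the condition for $D$, so nothing breaks.
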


\begin{proof}
We use finitely many charts on $M$ to obtain higher-regularity
estimates uniform over all lifts, including the paired boundary charts.
We use lifted interior charts where $\psi=0$, and pairs of interior
charts or boundary half-charts over $M\setminus C$ related by $\Phi$.
On a pair, we take Sobolev norms over both members and choose a smooth
cutoff $\eta$ pulled back from the same cutoff on the base. Thus
$\eta\circ\Phi=\eta$.  Multiplication by $\eta$ preserves the boundary
condition, and
\[
 B_r(\eta u)=\eta B_ru+c(d\eta)u,
 \qquad
 \|\Sigma^\psi u\|_{H^\ell(U)}
 \leq C_{U,\ell}\|u\|_{H^\ell(U)}.
\]
At the boundary, Lemma~\ref{lem:matching-boundary} identifies the
boundary condition with $\{(v,v):v\in\operatorname{Dom}(|A_1|^{1/2})\}$
for $A_1\oplus(-A_1)$.  If
$a=\sigma_{A_1}(x,\xi)$, $\xi\ne0$, then
\[
 (a\oplus(-a))(v,v)=(av,-av),\qquad
 (a\oplus(-a))(v,-v)=(av,av).
\]
Thus the principal symbol exchanges the two subspaces
$\{(v,v)\}$ and $\{(v,-v)\}$.
The compact auxiliary construction in the proof of
\cite[Theorem~7.29]{BaerBandara} therefore gives an elliptic local
boundary condition.  It is $\infty$-regular by
\cite[Corollary~2.16]{BaerBandaraGeneral}, and
\cite[Theorem~2.12]{BaerBandaraGeneral}, with nested cutoffs and the
closed graph theorem, gives the local estimate for $D$.
Using $Du=B_ru-r\Sigma^\psi u$ and induction on $\ell$ yields
\begin{equation}\label{eq:local-boundary-estimate}
 \|u\|_{H^{\ell+1}(K)}
 \leq C_{K,U,\ell}
 \bigl(\|B_ru\|_{H^\ell(U)}+\|u\|_{L^2(U)}\bigr),
\end{equation}
uniformly for $r\in[0,1]$, $\ell\geq0$, $K\Subset U$, and $u\in\mathcal D_\chi$
with $B_ru\in H^\ell(U)$.  On interior charts this is the usual
interior estimate.

The spectral theorem gives, for every integer $m\geq0$,
\begin{equation}\label{eq:spectral-heat-bound}
 E_r(s)L^2\subset\operatorname{Dom}(B_r^m),\qquad
 \|B_r^mE_r(s)\|
 \leq\sup_{\lambda\in\mathbb R}|\lambda|^me^{-s\lambda^2}.
\end{equation}
This proves \eqref{eq:heat-operator-bound}. Iteration of
\eqref{eq:local-boundary-estimate} on nested charts gives
\begin{equation}\label{eq:heat-regularity}
 \sup_{(r,s)\in[0,1]\times J}
 \|B_r^jE_r(s)u\|_{H^N(K)}\leq C_{K,J,j,N}\|u\|_{L^2}
\end{equation}
for every $N$ and $J\Subset(0,\infty)$.
Let $P=B_r^jE_r(s/2)$, $Q=E_r(s/2)$, and write
$P_xu=(Pu)(x)$, $Q_yu=(Qu)(y)$.  Sobolev embedding makes these
evaluation maps smooth in operator norm up to the boundary, and
the Schwarz kernel is given by
\[
 K_{B_r^jE_r(s)}(x,y)=P_xQ_y^*.
\]
The Schwarz kernel theorem
\cite[Theorem~5.2.1]{HormanderI} and continuity give uniqueness.
Commutation with $T_\gamma$, change of variables, and uniqueness give
\eqref{eq:heat-kernel-under-gamma}.

For an orthonormal basis $e_1,\ldots,e_d$ of $\cW_x$,
\[
 \int_Z|K_{B_r^jE_r(s)}(x,y)|^2\,dy
 =\sum_{\ell=1}^d\|QP_x^*e_\ell\|^2
 \leq d\|Q\|^2\|P_x\|^2.
\]
The right side is uniformly bounded on $\overline F$ by
\eqref{eq:heat-regularity}, and $F$ has finite volume.  Since
$B_r^jE_r(s)$ is self-adjoint, its kernel satisfies
$K(y,x)=K(x,y)^*$, proving \eqref{eq:heat-square-integrals}.
Applying a differential operator of fixed order with uniformly bounded
smooth coefficients on the left gives smooth kernels and the first
square-integrability bound, uniformly for $s\in J$. Boundedness follows
by summing the local estimates over a uniformly locally finite lifted
atlas.  The paired-chart formula for $\Sigma^\psi$ and
\eqref{eq:Sigma-psi-Sobolev} likewise give smooth kernels for the
products with $\Sigma^\psi$ used below.
\end{proof}

\begin{remark}
Since $\Sigma^\psi$ is a smooth bundle map, the products of
$B_r^jE_r(s)$ with $\Sigma^\psi$ used later in this section also
have smooth Schwarz kernels.
\end{remark}

\begin{lemma}\label{lem:trace-identities}
The following assertions hold.
\begin{enumerate}[label=\textup{(\roman*)},leftmargin=2.8em]
\item\label{trace:factors}
Let $P,L$ be bounded operators commuting with every $T_\gamma$.
Suppose $P,PL,LP$ have smooth Schwarz kernels and
$\int_F\int_Z|K_P(x,y)|^2\,dy\,dx<\infty$.  Then
\begin{equation}\label{eq:fundamental-domain-square-integrals}
 \int_F\!\int_Z|K_P(x,y)|^2\,dy\,dx
 =\int_Z\!\int_F|K_P(x,y)|^2\,dy\,dx,
\end{equation}
and
\begin{equation}\label{eq:bounded-factor}
 \max\{\|1_FPL\|_2,\|1_FLP\|_2\}
 \leq\|L\|\,\|1_FP\|_2.
\end{equation}
\item\label{trace:exchange}
Let $P,Q$ be bounded operators commuting with every $T_\gamma$,
with $P,Q,PQ,QP$ having smooth Schwarz kernels.  Suppose
$\int_F\int_Z(|K_P|^2+|K_Q|^2)<\infty$ and
\[
 \varepsilon P=(-1)^pP\varepsilon,\qquad
 \varepsilon Q=(-1)^qQ\varepsilon,\qquad p,q\in\{0,1\}.
\]
Then  $1_F\varepsilon PQ1_F$ and
$1_F\varepsilon QP1_F$ are trace class, the corresponding diagonal
supertrace integrals converge absolutely, and
\begin{equation}\label{eq:exchange}
 \Tr(1_F\varepsilon PQ1_F)=\Str_F(PQ)=(-1)^{pq}\Str_F(QP).
\end{equation}
\end{enumerate}
\end{lemma}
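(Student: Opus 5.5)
The plan is to reduce everything to the $\Gamma$-invariance of the kernels and a change of variables between $F$ and its translates. The key identity is equivariance: for $\Gamma$-commuting $P$ with smooth kernel, $K_P(\gamma x,\gamma y)=T_{\gamma,x}K_P(x,y)T_{\gamma,y}^{-1}$. This follows exactly as in Lemma~\ref{lem:heat-kernel}\ref{heat:deck}, by uniqueness of Schwarz kernels. Since $T_\gamma$ is unitary on fibers, we get $|K_P(\gamma x,\gamma y)|=|K_P(x,y)|$.

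For \eqref{eq:fundamental-domain-square-integrals}, I would write $Z=\bigsqcup_\gamma \gamma F$ up to measure zero. Then
\[
\int_F\!\int_Z|K_P|^2=\sum_\gamma\int_F\!\int_{\gamma F}|K_P(x,y)|^2\,dy\,dx .
\]
Substituting $x=\gamma x'$, $y=\gamma y'$ for $\gamma^{-1}$ and reindexing turns this into $\sum_\gamma\int_{\gamma F}\int_F$, which is the other iterated integral. Tonelli justifies the rearrangement since the integrands are nonnegative.

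For \eqref{eq:bounded-factor}:
\begin{itemize}
\item Note that $\|1_FP\|_2^2=\int_F\int_Z|K_P|^2$ is the Hilbert--Schmidt norm, because $1_FP$ has kernel $1_F(x)K_P(x,y)$. Then $\|1_FPL\|_2\le\|1_FP\|_2\|L\|$ is the ideal property.
\item For $1_FLP$ the ideal property does not apply directly. Instead I would use $\|1_FLP\|_2=\|P^*L^*1_F\|_2$. The kernel of $P^*1_F$ is $K_P(y,x)^*1_F(x)$, so its HS norm is $\int_Z\int_F|K_P(y,x)|^2$. By \eqref{eq:fundamental-domain-square-integrals} this equals $\|1_FP\|_2^2$.
\item It remains to move $L^*$ past $1_F$. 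Here I would use invariance. Write $L^*1_F$ and decompose $P^*L^*1_F=\sum_\gamma$ over translates. Alternatively, show $\|1_FLP\|_2^2=\int_F\int_Z|K_{LP}|^2=\int_Z\int_F|K_{LP}|^2=\|(LP)1_F\|_2^2$ by \eqref{eq:fundamental-domain-square-integrals} applied to $LP$. This is justified because finiteness of one side implies finiteness of the other by the Tonelli argument, and one side is finite as shown next.
\item Then $\|LP1_F\|_2\le\|L\|\|P1_F\|_2=\|L\|\|1_FP\|_2$, again by \eqref{eq:fundamental-domain-square-integrals}.
\end{itemize}
The Tonelli step makes the equality of the two iterated integrals unconditional in $[0,\infty]$, so no a priori finiteness is needed.

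For \ref{trace:exchange}, I would write
\[
1_F\varepsilon PQ1_F=\sum_\gamma (1_F\varepsilon P1_{\gamma F})(1_{\gamma F}Q1_F).
\]
Each summand is a product of two HS operators, hence trace class. Summability of the trace norms comes from Cauchy--Schwarz:
\[
\sum_\gamma\|1_FP1_{\gamma F}\|_2\|1_{\gamma F}Q1_F\|_2\le\Bigl(\sum_\gamma\|1_FP1_{\gamma F}\|_2^2\Bigr)^{1/2}\Bigl(\sum_\gamma\|1_{\gamma F}Q1_F\|_2^2\Bigr)^{1/2},
\]
and both factors are finite by \eqref{eq:fundamental-domain-square-integrals}. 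So the series converges in trace norm. The trace is then
\[
\sum_\gamma\int_F\int_{\gamma F}\str\bigl(\varepsilon K_P(x,y)K_Q(y,x)\bigr)\,dy\,dx ,
\]
which converges absolutely by the same Cauchy--Schwarz bound. This double integral equals $\int_F\str K_{PQ}(x,x)\,dx$ because $K_{PQ}(x,x)=\int_Z K_P(x,y)K_Q(y,x)\,dy$. I expect this kernel-composition identity at the diagonal to be the main technical point. I would justify it by writing $K_{PQ}(x,x)=P_xQ^x$ with evaluation maps as in the proof of Lemma~\ref{lem:heat-kernel}, using smoothness of the kernels and $L^2$-integrability in $y$ for a.e. $x$.

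For the exchange, I would use equivariance. Conjugation by $T$ preserves $\varepsilon$, since $T_\gamma$ commutes with chirality. Applying the substitution $(x,y)\mapsto(\gamma^{-1}x,\gamma^{-1}y)$ to the $\gamma$-th term gives
\[
\int_{\gamma^{-1}F}\int_F \str\bigl(\varepsilon K_P(x',y')K_Q(y',x')\bigr).
\]
Summing over $\gamma$ and using Fubini, justified by absolute convergence, gives $\int_F\int_Z\str(\varepsilon K_P(x,y)K_Q(y,x))\,dx\,dy$ with the roles of the variables swapped. Cyclicity of the fiber trace then gives
\[
\str(\varepsilon K_PK_Q)=\tr(K_Q\varepsilon K_P)=(-1)^q\tr(\varepsilon K_QK_P),
\]
which is $(-1)^q\str(K_Q(y,x)K_P(x,y))$. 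By the symmetric argument this integrates to $\Str_F(QP)$. The sign $(-1)^q$ equals $(-1)^{pq}$ in every case where the supertrace is nonzero. If $p\ne q$, then $PQ$ is odd and its supertrace vanishes pointwise. If $p=q=1$ the sign is $-1=(-1)^{pq}$, and if $p=q=0$ it is $+1$.
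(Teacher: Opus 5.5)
Your part (i), the Hilbert--Schmidt and trace-class bookkeeping in (ii), and the final change of variables with the sign $(-1)^q=(-1)^{pq}$ (both sides vanish when $p\ne q$) are correct and essentially match the paper. The gap is at the step you flag yourself: passing from $\Tr(1_F\varepsilon PQ1_F)=\int_F\int_Z\tr\bigl(\varepsilon K_P(x,y)K_Q(y,x)\bigr)\,dy\,dx$ to $\Str_F(PQ)=\int_F\str K_{PQ}(x,x)\,dx$. (Also, $\str$ already contains $\varepsilon$, so your $\str(\varepsilon K_PK_Q)$ should read $\tr(\varepsilon K_PK_Q)$.)

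The evaluation-map argument of Lemma~\ref{lem:heat-kernel} is not available here. In that lemma, $x\mapsto P_x$ is smooth in operator norm because elliptic boundary regularity and Sobolev embedding are applied to the specific operators $B_r^jE_r(s)$. In the present lemma, $P$ and $Q$ are arbitrary bounded equivariant operators whose kernels are only assumed smooth and square-integrable on $F\times Z$. Smoothness of $K_P$ on $Z\times Z$ gives no control over the tails of $K_P(x,\cdot)$ at infinity in the noncompact $Z$. Hence $x\mapsto K_P(x,\cdot)\in L^2(Z)$ need not be continuous, or even bounded, on $F$. Square-integrability for a.e.\ $x$ together with Fubini only gives $K_{PQ}(x,z)=\int_ZK_P(x,y)K_Q(y,z)\,dy$ for almost every $(x,z)$. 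The diagonal is a null set, so nothing follows at $z=x$.

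The paper never composes kernels on the diagonal. It takes a relatively compact $V\supset\overline F$ and cutoffs $\eta_j\in C_c^\infty(V\cap Z^\circ)$ with $0\le\eta_j\le1$ and $\eta_j\to1_F$ a.e. Because $V$ meets only finitely many translates of $F$, dominated convergence gives $\eta_j\varepsilon P\to1_F\varepsilon P$ and $Q\eta_j\to Q1_F$ in Hilbert--Schmidt norm. Hence $\eta_j\varepsilon PQ\eta_j\to1_F\varepsilon PQ1_F$ in trace norm. Each $\eta_j\varepsilon PQ\eta_j$ has a smooth compactly supported kernel, so its trace is $\int\eta_j^2\str K_{PQ}(x,x)\,dx$ by \cite[Proposition~2.44]{BGV}. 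Boundedness of $K_{PQ}$ on $\overline V$ then gives the limit $\Str_F(PQ)$. This uses only continuity of $K_{PQ}$, which is a hypothesis.

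If you want to keep your pointwise route, you can repair it with the Lebesgue differentiation theorem for the $L^2(Z)$-valued maps $x\mapsto K_P(x,\cdot)$ and $x\mapsto K_Q(\cdot,x)$, working in a local trivialization. Average $K_{PQ}$ over $B_\delta(x_0)\times B_\delta(x_0)$: by continuity this tends to $K_{PQ}(x_0,x_0)$, and the averaged kernels converge in $L^2(Z)$ at Lebesgue points. That argument has to be supplied, though. Norm-continuous evaluation maps hold in the paper's applications, but not under the lemma's stated hypotheses.
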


\begin{proof}
For \textup{(i)}, kernel uniqueness gives
$K_P(\gamma x,\gamma y)=T_{\gamma,x}K_P(x,y)T_{\gamma,y}^{-1}$.
Then we use Tonelli's theorem and  $Z=\coprod_\gamma\gamma F$ to obtain
\[
\begin{aligned}
 \int_F\!\int_Z|K_P(x,y)|^2\,dy\,dx
 &=\sum_\gamma\int_F\!\int_{\gamma F}|K_P(x,y)|^2\,dy\,dx\\
 &=\sum_\gamma\int_{\gamma^{-1}F}\!\int_F|K_P(x,y)|^2\,dy\,dx
 =\int_Z\!\int_F|K_P(x,y)|^2\,dy\,dx.
\end{aligned}
\]
Here only the unitarity of each $T_\gamma$ is used.
Thus $\|1_FP\|_2=\|P1_F\|_2$. Since the product of a Hilbert--Schmidt operator and a bounded
operator is Hilbert--Schmidt, $\|1_FPL\|_2\leq \|1_FP\|_2\|L\|$.
Applying \eqref{eq:fundamental-domain-square-integrals} to $LP$ gives
\[
 \|1_FLP\|_2=\|LP1_F\|_2
 \leq \|L\|\,\|P1_F\|_2
 =\|L\|\,\|1_FP\|_2.
\]
This proves \eqref{eq:bounded-factor}.

For \textup{(ii)}, by \textup{(i)} the operators
$1_F\varepsilon P$ and $Q1_F$ are Hilbert--Schmidt.  Hence
$1_F\varepsilon PQ1_F$ is trace class.  Moreover, Cauchy--Schwarz gives
\begin{equation}\label{eq:product-integrable}
 \int_F\!\int_Z
 |\str(K_P(x,y)K_Q(y,x))|\,dy\,dx
 \leq\|1_FP\|_2\|Q1_F\|_2<\infty.
\end{equation}
Hence,
\[
 \Tr(1_F\varepsilon PQ1_F)
 =\int_F\!\int_Z\str(K_P(x,y)K_Q(y,x))\,dy\,dx.
\]
We next identify this trace with $\Str_F(PQ)$. We choose a relatively
compact neighborhood $V$ of $\overline F$ and
$\eta_j\in C_c^\infty(V\cap Z^\circ)$ with
$0\leq\eta_j\leq1$ and $\eta_j\to1_F$ almost everywhere.
Since $V$ meets only finitely many translates of $F$, dominated
convergence gives $\eta_j\varepsilon P\to1_F\varepsilon P$ and
$Q\eta_j\to Q1_F$ in Hilbert--Schmidt norm.  Their products converge
in trace norm. Applying \cite[Proposition~2.44]{BGV} to the smooth,
compactly supported Schwarz kernel of $\eta_j\varepsilon PQ\eta_j$ yields
\[
 \Tr(1_F\varepsilon PQ1_F)
 =\lim_j\int_Z\eta_j^2\str K_{PQ}(x,x)\,dx
 =\Str_F(PQ).
\]
The same argument also applies to $QP$. Lastly, the change of variables in \textup{(i)}, applied to the absolutely
convergent integral in \eqref{eq:product-integrable}, gives
\[
\begin{aligned}
 \Str_F(PQ)
 &=\int_Z\!\int_F\str(K_P(x,y)K_Q(y,x))\,dy\,dx\\
 &=(-1)^{pq}\int_F\!\int_Z
       \str(K_Q(y,x)K_P(x,y))\,dx\,dy
 =(-1)^{pq}\Str_F(QP).
\end{aligned}
\]
The second equality uses the identity $\str(AB)=(-1)^{pq}\str(BA)$. 
\end{proof}

Applying these lemmas to $P=Q=E_r(t/2)$ defines

\begin{equation}\label{eq:H-definition}
 H_r(t)=\Str_F(E_r(t))=\Tr(1_F\varepsilon E_r(t)1_F).
\end{equation}

\vspace{0.5cm}

\begin{proposition}\label{prop:heat-invariance}
$H_r(t)$ is independent of $r\in[0,1]$ and $t>0$.
\end{proposition}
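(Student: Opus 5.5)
We differentiate $H_r(t)$ in both variables and show each derivative is the $\Str_F$ of a supercommutator. The exchange identity of Lemma~\ref{lem:trace-identities}\ref{trace:exchange} then kills each derivative. The needed regularity and square-integrability come from Lemma~\ref{lem:heat-kernel}; Lemma~\ref{lem:B-properties} supplies the $T_\gamma$-equivariance and oddness.

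\emph{Dependence on $t$.} Fix $r$ and write $E=E_r$, $B=B_r$. For $t>0$ and small $h$, the semigroup property gives $E(t+h)-E(t)=E(t/2)\bigl(E(h)-1\bigr)E(t/2)$. The spectral theorem shows that
\[
 h^{-1}\bigl(E(t+h)-E(t)\bigr)\to -E(t/2)B^2E(t/2).
\]
We want this convergence in a norm compatible with $\Tr(1_F\varepsilon\,\cdot\,1_F)$. Write the difference quotient as $E(t/2)\,L_h\,E(t/2)$, where $L_h=h^{-1}(E(h)-1)E(\delta)^{-1}$ is not available directly. Instead we split as $\bigl(1_F E(t/4)\bigr)\,M_h\,\bigl(E(t/4)1_F\bigr)$ with
\[
 M_h=h^{-1}\bigl(E(t/4+h)-E(t/4)\bigr)E(t/4)\cdot E(t/4)^{-1}\ldots
\]
More cleanly, $M_h=E(t/4)\,h^{-1}(E(h)-1)\,E(t/4)$. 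This operator is bounded uniformly in $h$ and converges in operator norm to $-B^2E(t/2)$, since $\sup_\lambda|h^{-1}(e^{-h\lambda^2}-1)+\lambda^2|e^{-t\lambda^2/2}\to0$. Combining Hilbert--Schmidt bounds for $1_FE(t/4)$ and $E(t/4)1_F$ with \eqref{eq:bounded-factor}, the trace-norm limit gives
\[
 \partial_tH_r(t)=-\Tr\bigl(1_F\varepsilon B^2E(t)1_F\bigr)=-\Str_F\bigl(BE(t/2)\cdot BE(t/2)\bigr).
\]
Take $P=Q=BE(t/2)$. Both are odd, so $p=q=1$, and both commute with $T_\gamma$. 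Their kernels are smooth and square-integrable over $F\times Z$ by Lemma~\ref{lem:heat-kernel}. Then \eqref{eq:exchange} gives $\Str_F(PQ)=-\Str_F(QP)=-\Str_F(PQ)$, so $\partial_tH_r=0$.

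\emph{Dependence on $r$.} The domain $\mathcal D_\chi$ is fixed and $\partial_rB_r=\Sigma^\psi$ is bounded, so Duhamel's formula holds:
\[
 E_{r'}(t)-E_r(t)=-\int_0^t E_{r'}(t-s)\bigl(B_{r'}^2-B_r^2\bigr)E_r(s)\,ds .
\]
It can be justified on the domain since $B_{r'}^2-B_r^2=(r'-r)(B_r\Sigma^\psi+\Sigma^\psi B_{r'})$, as quadratic forms on $\mathcal D_\chi$. Dividing by $r'-r$ and letting $r'\to r$, we use continuity of $r\mapsto E_r(s)$ in operator norm, which follows from the same Duhamel formula, and we write each integrand in the factored form $1_F(\cdot)(\cdot)1_F$ with a heat factor on each side. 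This gives
\[
 \partial_rH_r(t)=-t\,\Str_F\bigl(E_r(t/2)\{B_r,\Sigma^\psi\}E_r(t/2)\bigr)
\]
after cyclically moving the $E_r(t-s)$ and $E_r(s)$ factors together. That step uses \eqref{eq:exchange} with even $P$, so there is no sign. Equivalently, the derivative equals $-t\,\Str_F\bigl(\{B_rE_r(t/2),\Sigma^\psi E_r(t/2)\}\bigr)$. Take $P=B_rE_r(t/2)$ and $Q=\Sigma^\psi E_r(t/2)$. Both are odd and commute with $T_\gamma$, and their kernels are smooth by the Remark after Lemma~\ref{lem:heat-kernel}. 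Then \eqref{eq:exchange} gives $\Str_F(PQ)=-\Str_F(QP)$, so the anticommutator has vanishing supertrace and $\partial_rH_r=0$. Joint continuity in $(r,t)$, which follows from the uniform bounds in Lemma~\ref{lem:heat-kernel}, lets us combine the two statements.

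\emph{Main obstacle.} The main difficulty is the rigorous justification of the $r$-derivative. One must show that the Duhamel integrand is trace class after compression by $1_F$, with bounds integrable near $s=0$ and $s=t$. To get them, split the integral at $s=t/2$. On $[0,t/2]$, place the $1_F$ and a factor $E_{r'}(t/4)$ on the left, and use $\|B_rE_r(s)\|\le c_1s^{-1/2}$ from \eqref{eq:heat-operator-bound}, which is integrable. On $[t/2,t]$, argue symmetrically. The Hilbert--Schmidt norms are controlled uniformly in $r$ through \eqref{eq:bounded-factor} and \eqref{eq:heat-square-integrals}. The cyclic rearrangement is then an application of Lemma~\ref{lem:trace-identities}\ref{trace:exchange} with operators split as $E(t-s)=E((t-s)/2)^2$.
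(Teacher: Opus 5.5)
Your $t$-argument is the paper's: the factorization through $E_r(t/4)$, operator-norm convergence of the middle factor, and $\Str_F(P^2)=0$ for the odd operator $P=B_rE_r(t/2)$. For $r$ your route is correct but differs. You divide the Duhamel formula by $r'-r$, pass to the limit, and show that $\partial_rH_r(t)$ is $\Str_F$ of an anticommutator of two odd operators.

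The paper never takes that limit. It applies Lemma~\ref{lem:trace-identities}\ref{trace:exchange} at finite $h$ to the two terms of \eqref{eq:paired-heat-difference}. The first-order contributions cancel exactly, leaving
\[
 H_{r+h}(t)-H_r(t)=-h^2\int_0^t\Str_F\bigl(\Sigma^\psi E_{r+h}(s)\Sigma^\psi E_r(t-s)\bigr)\,ds ,
\]
which is $O(h^2)$ by the uniform bounds of Lemma~\ref{lem:heat-kernel} alone. Subdividing $[r_0,r_1]$ then finishes. The payoff is that no continuity of the heat operators in $r$ is ever needed.

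Your route does need continuity in $r$, and slightly more than you state. The integrand contains $B_{r'}E_{r'}(\cdot)$, so operator-norm continuity of $r\mapsto E_r(s)$ is not enough. You also need it for $r\mapsto B_rE_r(s)$. This holds: $B_{r'}-B_r=(r'-r)\Sigma^\psi$ is bounded, so the resolvents converge in norm, and $\lambda e^{-s\lambda^2}$ vanishes at infinity. You then use \eqref{eq:bounded-factor} to upgrade this to Hilbert--Schmidt convergence after compression by $1_F$, and dominated convergence in $s$. In return, your version exhibits the variation transparently as a supertrace of a supercommutator.

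Two cautions. First, $\Sigma^\psi$ does not preserve $\mathcal D_\chi$: on $\partial Z$ one has $\Sigma^\psi u=-\psi c(\nu)u$. So $E_r(t/2)\{B_r,\Sigma^\psi\}E_r(t/2)$ only makes sense as $B_rE_r(t/2)\Sigma^\psi E_r(t/2)+E_r(t/2)\Sigma^\psi B_rE_r(t/2)$, which is what your quadratic-form Duhamel formula produces. Reading it through the zeroth-order differential-operator anticommutator would drop a Green's-formula boundary term of the form $\int_{\partial Z}\psi\langle w,v\rangle$. Your ``equivalent'' form $\{B_rE_r(t/2),\Sigma^\psi E_r(t/2)\}$ is the correct one.

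Second, in the trace-norm bounds on $0<s\leq t/2$, take both Hilbert--Schmidt factors from $E_{r'}(t-s)=E_{r'}((t-s)/2)^2$, moving $1_F$ across via Lemma~\ref{lem:trace-identities}\ref{trace:factors}. A factor $E_r(s)1_F$ has Hilbert--Schmidt norm of order $s^{-n/4}$ and would destroy integrability near $s=0$; the case $t/2\leq s<t$ is symmetric.

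The joint-continuity remark is unnecessary. Independence in each variable separately already gives independence on $[0,1]\times(0,\infty)$.
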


\begin{proof}
We prove $r$-independence by establishing a quadratic difference
estimate. Fix $t>0$ and $r,r+h\in[0,1]$.  For $f,g\in L^2$, consider
$u(s)=E_{r+h}(s)f$ and $v(s)=E_r(t-s)g$.  On $0<s<t$,
self-adjointness on the common domain gives
\[
 \frac d{ds}\langle u,v\rangle
 =-h\langle\Sigma^\psi u,B_rv\rangle
  -h\langle B_{r+h}u,\Sigma^\psi v\rangle.
\]
The right side above is bounded by
$C|h|\|f\|\|g\|(s^{-1/2}+(t-s)^{-1/2})$ by
\eqref{eq:heat-operator-bound}.  
Integrating over $[\delta, t-\delta]$ and letting $\delta\to 0$ gives
\begin{equation}\label{eq:paired-heat-difference}
\begin{aligned}
 E_{r+h}(t)-E_r(t)=-h\int_0^t\bigl(&B_rE_r(t-s)\Sigma^\psi E_{r+h}(s)
 +E_r(t-s)\Sigma^\psi B_{r+h}E_{r+h}(s)\bigr)\,ds.
\end{aligned}
\end{equation}
We next estimate the trace norms of the two operators in the
integrand.  If $0<s\leq t/2$, write $E_r(t-s)=E_r((t-s)/2)E_r((t-s)/2)$,
while if $t/2\leq s<t$, write $E_{r+h}(s)=E_{r+h}(s/2)E_{r+h}(s/2)$.
In either case, the parameter of each of the two equal factors is at
least $t/4$.  Lemmas~\ref{lem:heat-kernel} and
\ref{lem:trace-identities}\ref{trace:factors}, together with
$\|AC\|_1\leq\|A\|_2\|C\|_2$, therefore give
\begin{equation}\label{eq:heat-variation-bound}
\begin{aligned}
 &\|1_F\varepsilon B_rE_r(t-s)\Sigma^\psi E_{r+h}(s)1_F\|_1
+
 \|1_F\varepsilon E_r(t-s)\Sigma^\psi
 B_{r+h}E_{r+h}(s)1_F\|_1\\
 &\leq
 C_t\bigl(1+s^{-1/2}+(t-s)^{-1/2}\bigr).
\end{aligned}
\end{equation}
The bound in \eqref{eq:heat-variation-bound} is integrable on
$(0,t)$, so the integral converges in trace norm.  We may therefore
take the trace under the integral.  Lemma
\ref{lem:trace-identities}\ref{trace:exchange} gives
\[
\begin{aligned}
 \Str_F(B_rE_r(t-s)\Sigma^\psi E_{r+h}(s))
 &=-\Str_F(\Sigma^\psi E_{r+h}(s)B_rE_r(t-s)),\\
 \Str_F(E_r(t-s)\Sigma^\psi B_{r+h}E_{r+h}(s))
 &=\Str_F(\Sigma^\psi E_{r+h}(s)B_{r+h}E_r(t-s)).
\end{aligned}
\]
Subtracting $B_r$ from $B_{r+h}$ gives
\begin{equation}\label{eq:heat-quadratic-difference}
 H_{r+h}(t)-H_r(t)
 =-h^2\int_0^t\Str_F\bigl(
 \Sigma^\psi E_{r+h}(s)\Sigma^\psi E_r(t-s)\bigr)\,ds.
\end{equation}
For $0<s\leq t/2$, write
\[
 \Sigma^\psi E_{r+h}(s)\Sigma^\psi E_r(t-s)
 =
 \bigl[\Sigma^\psi E_{r+h}(s)\Sigma^\psi
 E_r((t-s)/2)\bigr]E_r((t-s)/2),
\]
while for $t/2\leq s<t$, write
\[
 \Sigma^\psi E_{r+h}(s)\Sigma^\psi E_r(t-s)
 =
 \bigl[\Sigma^\psi E_{r+h}(s/2)\bigr]
 \bigl[E_{r+h}(s/2)\Sigma^\psi E_r(t-s)\bigr].
\]
In the first case $(t-s)/2\geq t/4$, and in the second
$s/2\geq t/4$.  Lemma~\ref{lem:heat-kernel}\textup{(iii)},
\eqref{eq:bounded-factor}, and \eqref{eq:product-integrable} therefore give
\[
 \left|
 \Str_F\bigl(
 \Sigma^\psi E_{r+h}(s)\Sigma^\psi E_r(t-s)
 \bigr)
 \right|
 \leq C_t
\]
uniformly for $r,r+h\in[0,1]$ and $0<s<t$.  Hence
\[
 |H_{r+h}(t)-H_r(t)|\leq C_t h^2.
\]
Subdivision of
$[r_0,r_1]$ into $N$ equal intervals gives
$|H_{r_1}(t)-H_{r_0}(t)|\leq C_t(r_1-r_0)^2/N$. Taking $N\to\infty$ proves
independence of $r$.

To show independence in $t$, we consider
\[
 R_h=\frac{E_r(t/2+h)-E_r(t/2)}h+B_r^2E_r(t/2).
\]
For $0<|h|<t/4$, the spectral theorem gives
$\|R_h\|\leq\frac{|h|}{2}\sup_\lambda\lambda^4e^{-t\lambda^2/4}\to0$.
By the semigroup property,
\[
 \frac{E_r(t+h)-E_r(t)}h+B_r^2E_r(t)
 =E_r(t/4)R_hE_r(t/4).
\]
By Lemma~\ref{lem:heat-kernel}, $E_r(t/4)1_F$ is
Hilbert--Schmidt, and hence
\[
 \bigl\|
 1_F\varepsilon E_r(t/4)R_hE_r(t/4)1_F
 \bigr\|_1
 \leq
 \|E_r(t/4)1_F\|_2^2\,\|R_h\|
 \longrightarrow0.
\]
Thus
\[
 H_r'(t)=-\Str_F(B_r^2E_r(t)).
\]
Set $P=B_rE_r(t/2)$.  Since $P$ anticommutes with chirality and
$P^2=B_r^2E_r(t)$, Lemma~\ref{lem:trace-identities}\ref{trace:exchange}
gives $\Str_F(P^2)=-\Str_F(P^2)=0$.  Therefore $H_r'(t)=0$.
\end{proof}


\subsection{Identification with the characteristic number}

\begin{lemma}\label{lem:collar-deformation}
There are $\delta>0$, a metric $g_1$ on $M$, and unitary connections
$\nabla^{E_a,1}$ with the following properties.
\begin{enumerate}[label=\textup{(\roman*)},leftmargin=2.8em]
\item\label{collar:product}
The collar $[0,\delta]\times\partial M$ lies in $M\setminus C$.  Using
parallel transport for the original connections to identify the
bundles along the collar,
\[
 g_1=dx^2+g|_{T\partial M},\qquad
 \nabla^{E_a,1}=\partial_x+\nabla^{E_a}|_{\partial Z_a}.
\]
\item\label{collar:fixed}
The metric and connections are unchanged on a neighborhood of $C$
and its lifts.
\item\label{collar:exterior}
On $\pi_0^{-1}(M\setminus C)$,
$U^E\nabla^{E_0,1}=\nabla^{E_1,1}U^E$.
The forms $\ch(E_a,\nabla^{E_a,1})$ descend to $M$ and agree on
$M\setminus C$.
\item\label{collar:domain}
On $\cW_1=(S_{Z_1,g_1}\otimes E_1)\sqcup
(S_{Z_0,g_1}\otimes E_0)$, with opposite chirality over $Z_0$,
$D^{(1)}=D_{1,1}\oplus D_{0,1}$ is self-adjoint on
\[
 \{u\in H^1(Z,\cW_1):u|_{\partial Z}\in\mathcal B_\chi\}.
\]
\item\label{collar:heat}
For every $t>0$,
\begin{equation}\label{eq:collar-product-heat}
 \Str_F(e^{-t(D^{(1)})^2})=H_0(t).
\end{equation}
\end{enumerate}
\end{lemma}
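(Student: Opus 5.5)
We construct the deformation by interpolating along a collar, then deform the operator homotopically and reuse the invariance argument of Proposition~\ref{prop:heat-invariance} for the parameter-dependent operators.

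\textbf{Construction.} Let $x$ be the distance to $\partial M$. Choose $\delta>0$ so small that the normal exponential map identifies $[0,3\delta]\times\partial M$ with a collar disjoint from a neighborhood of $C$. This is possible because $\dist(C,\partial M)>0$.

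Along the collar, identify the spinor and twisting bundles with their restrictions to the boundary by parallel transport in $x$ for the original connections. In these coordinates the original data read
\[
 g=dx^2+g_x,\qquad \nabla^{E_a}=\partial_x+A_a(x),
\]
with $A_a(0)=\nabla^{E_a}|_{\partial Z_a}$. Fix a cutoff $\rho(x)$ equal to $0$ on $[0,\delta]$ and $1$ on $[2\delta,\infty)$, and set
\[
 g_1=dx^2+g_{\rho(x)x},\qquad \nabla^{E_a,1}=\partial_x+A_a(\rho(x)x).
\]
Both are pulled back from the base and the $\Gamma$-bundle data, so they are $\Gamma$-invariant, and the $\Gamma$-action on $E_a$ commutes with the construction. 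This gives \ref{collar:product} and \ref{collar:fixed}.

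For \ref{collar:exterior}, note that $U^E$ is parallel, so it intertwines the parallel transports. Hence it maps $A_0(x)$ to $A_1(x)$ and therefore intertwines the deformed connections. The same holds for $\widehat\Phi$ with respect to the spin connections of $g_1$, since $\Phi$ is an isometry of lifted metrics. The identities of Setup~\ref{setup:data} persist for $g_1$ and $\nabla^{E_a,1}$, with $T^a_\gamma$ transported through the canonical identification of spinor bundles of different metrics. Hence the Chern character forms descend and agree off $C$, exactly as in the argument before Theorem~\ref{thm:vanishing}.

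Item \ref{collar:domain} follows by rerunning Lemma~\ref{lem:matching-boundary} and \eqref{eq:domain-D-chi} for the new data. The only inputs used there were the compatibility of $U$ with Clifford multiplication and connections, and the uniform bounded geometry of the lifts.

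\textbf{Heat supertrace.} For \ref{collar:heat}, use the one-parameter family $g_\tau$, $\nabla^{E_a,\tau}$ for $\tau\in[0,1]$, obtained by replacing $\rho$ with $(1-\tau)+\tau\rho$. Let $D^{(\tau)}$ be the corresponding operator. To compare operators acting on different bundles, transport $D^{(\tau)}$ to the fixed Hilbert space $L^2(Z,\cW)$. Use the Bourguignon--Gauduchon identification of spinor bundles, multiplied by the square root of the volume density ratio so that it is unitary. This identification preserves chirality, commutes with $T_\gamma$, and intertwines the boundary condition $\mathcal B_\chi$, because $\nu$ and $U$ are unchanged at $\partial M$.

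After conjugation, $D^{(\tau)}$ is an odd, $\Gamma$-equivariant, self-adjoint operator on the fixed domain $\mathcal D_\chi$. It differs from $D$ by a first-order operator $V_\tau$ supported in the collar, smooth in $\tau$, with coefficients pulled back from $M$.

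The plan is to repeat the Duhamel argument of Proposition~\ref{prop:heat-invariance}:
\[
 \frac{d}{d\tau}\Str_F\bigl(e^{-tD_\tau^2}\bigr)
 =-t\,\Str_F\bigl(\{D_\tau,\dot D_\tau\}e^{-tD_\tau^2}\bigr)
 =-2t\,\Str_F\bigl(\dot D_\tau D_\tau e^{-tD_\tau^2}\bigr).
\]
The second equality uses the cyclicity of Lemma~\ref{lem:trace-identities}\ref{trace:exchange}. The final expression equals $-t\,\Str_F\bigl(\{D_\tau,\dot D_\tau e^{-tD_\tau^2/2}\}\bigr)$-type terms that vanish by the same exchange identity, since $\dot D_\tau$ and $D_\tau$ are both odd, so their graded commutator has zero supertrace. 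One must justify that $\dot D_\tau E(s)$ has Hilbert--Schmidt kernels on $F$. This follows from Lemma~\ref{lem:heat-kernel} and the local estimates \eqref{eq:local-boundary-estimate}, applied uniformly in $\tau$. The result is that $\Str_F(e^{-t(D^{(\tau)})^2})$ is constant in $\tau$, and at $\tau=0$ it equals $H_0(t)$.

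\textbf{Main obstacle.} The hardest step is this last invariance argument. Because $\dot D_\tau$ is first order, the Duhamel integrand involves $\dot D_\tau E(s)$, which has an $s^{-1/2}$ singularity. It also requires that $\dot D_\tau u$ be controlled for $u\in\mathcal D_\chi$ uniformly up to the boundary. The first thing to try is to write $\dot D_\tau$ as $\dot D_\tau=c(\cdot)\nabla+$ bounded terms. The $H^1$ graph estimate \eqref{eq:D-graph-estimate}, uniform in $\tau$, then bounds $\|\dot D_\tau E_\tau(s)\|\le Cs^{-1/2}$. Integrability on $(0,t)$ then follows as in \eqref{eq:heat-variation-bound}.

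A technical point is whether the unitary identification really preserves $\mathcal D_\chi$ exactly. This should hold because the metric deformation fixes $\nu$ and fixes the metric at $x=0$.
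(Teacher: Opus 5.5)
Your construction and overall route are the paper's. Your cutoff $\rho$ is the paper's $1-\vartheta$, and your family $g_\tau$ is the paper's $g_\rho=dx^2+g_{h_\rho(x)}$ with $\rho=\tau$. The Bourguignon--Gauduchon map corrected by $\mu^{1/2}$ is the paper's $J_\rho$, which moves everything onto the fixed domain $\mathcal D_\chi$. The step that fails as written is the second equality in your derivative computation, which uses ungraded cyclicity. By Lemma~\ref{lem:trace-identities}\ref{trace:exchange}, moving the odd factor $D_\tau$ past the odd operator $\dot D_\tau e^{-tD_\tau^2}$ costs a sign, so $\Str_F(D_\tau\dot D_\tau e^{-tD_\tau^2})=-\Str_F(\dot D_\tau D_\tau e^{-tD_\tau^2})$. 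The two halves of the anticommutator therefore cancel instead of doubling. Your ``final expression'' $-2t\Str_F(\dot D_\tau D_\tau e^{-tD_\tau^2})$ is not zero in general. For example, on $\mathbb C^{1|1}$ take $D_\tau$ with lower-left entry $e^{i\tau}a$ and upper-right entry $e^{-i\tau}\bar a$. Then $\str(\dot D_\tau D_\tau e^{-tD_\tau^2})=-2i|a|^2e^{-t|a|^2}$, although $\str e^{-tD_\tau^2}\equiv0$. What does vanish, for the reason you give at the end, is your first expression. Indeed $\{D_\tau,\dot D_\tau\}e^{-tD_\tau^2}=D_\tau(\dot D_\tau e^{-tD_\tau^2})+(\dot D_\tau e^{-tD_\tau^2})D_\tau$ is a supercommutator of odd operators.

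Even with that corrected, differentiating in $\tau$ leaves two things you do not justify. First, $\tau\mapsto\Str_F(e^{-tD_\tau^2})$ must be differentiable with the stated derivative; passing $h\to0$ inside the trace and the Duhamel integral needs continuity of the heat operators in $\tau$ in a suitable norm. Second, $D_\tau\dot D_\tau e^{-tD_\tau^2}$ must be defined, which means checking that $\dot D_\tau e^{-tD_\tau^2}u$ satisfies the boundary condition. The paper avoids both by using finite differences. It puts $L_{\rho,h}=\widetilde D_{\rho+h}-\widetilde D_\rho$ in place of $h\Sigma^\psi$ in the Duhamel argument of Proposition~\ref{prop:heat-invariance}, where unbounded operators are applied only to outputs of heat operators. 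Graded exchange then removes the first-order term exactly and leaves $\mathscr H_{\rho+h}(t)-\mathscr H_\rho(t)=-\int_0^t\Str_F(L_{\rho,h}S_{\rho+h}(s)L_{\rho,h}S_\rho(t-s))\,ds$. This is $O(h^2)$ by the $s^{-1/2}$ bounds you identify, and subdivision gives constancy.

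The paper also proves two points you only assert. First, by the spinorial Gauss formula the conjugated adapted boundary operator is unchanged: the tangential connection contributes $\rho H/2$, which is canceled by $H_\rho=(1-\rho)H$. Hence Lemma~\ref{lem:matching-boundary} applies verbatim. Second, Setup~\ref{setup:data} gives no separate $\Gamma$-action on $E_a$. Irreducibility of the Clifford module gives locally $T^a_\gamma=\widehat T^a_\gamma\otimes t^a_\gamma$, with $t^a_\gamma$ commuting with normal parallel transport. This is what makes the transported $T^a_\gamma$ preserve the deformed connections.
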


\begin{proof}
Choose $\delta>0$ with $[0,4\delta)\times\partial M\subset M\setminus C$.
Using parallel transport along the normal direction, we can write $g=dx^2+g_x$ and
$\nabla^{E_a}=\partial_x+\nabla_x^{E_a}$ in the collar.  
Choose a smooth cutoff $\vartheta$ with values in $[0,1]$, equal to $1$ on
$[0,\delta]$ and $0$ on $[2\delta,4\delta)$, and set
\begin{equation}\label{eq:collar-deformation}
 h_\rho(x)=(1-\rho\vartheta(x))x,\quad
 g_\rho=dx^2+g_{h_\rho(x)},\quad
 \nabla^{E_a,\rho}=\partial_x+\nabla^{E_a}_{h_\rho(x)}.
\end{equation}
Outside the collar, we extend $(g_\rho, \nabla^{E_a,\rho})$ by $(g, \nabla^{E_a})$.  This gives
\ref{collar:product} and \ref{collar:fixed}.  Since $U^E$ is independent of $x$ and satisfies
$U^E\nabla_x^{E_0}=\nabla_x^{E_1}U^E$ for every $x$, it remains parallel for the deformed connections on
$M\setminus C$.

Let $\beta_\rho^a:S_{Z_a,g}\to S_{Z_a,g_\rho}$ be the
Bourguignon--Gauduchon identification, chosen continuously from the
identity; see \cite[\S I, Propositions~1 and 5, and \S II]{BourguignonGauduchon}.
It is the identity at $x=0$, where the metric, normal vector, and
twisting connection are unchanged.  Thus the transported map
$U_\rho=(\beta_\rho^1\otimes1)U(\beta_\rho^0\otimes1)^{-1}$
defines the same boundary map $\chi$. The tangential spin connection changes because the normal derivative
of the metric changes.  Its contribution to $A_{a,\rho}$ is canceled
by the change of $H_\rho/2$.
Moreover, $h_\rho'(0)=1-\rho$, so
$\mathrm{II}_\rho=(1-\rho)\mathrm{II}$ and $H_\rho=(1-\rho)H$.
For a tangential orthonormal frame, the spinorial Gauss formula gives
\[
 (\beta_\rho^a\otimes1)^{-1}\nabla^{a,\rho}_{e_j}
       (\beta_\rho^a\otimes1)
 =\nabla^a_{e_j}+\frac\rho2\sum_k
       \mathrm{II}(e_j,e_k)c_a(\nu_a)c_a(e_k).
\]
Consequently the adapted boundary operator satisfies
\[
 (\beta_\rho^a\otimes1)^{-1}A_{a,\rho}(\beta_\rho^a\otimes1)-A_a
 =\frac{\rho H}{2}+\frac{H_\rho-H}{2}=0.
\]
Write $\cW_\rho=(S_{Z_1,g_\rho}\otimes E_1)\sqcup
(S_{Z_0,g_\rho}\otimes E_0)$, with opposite chirality on $Z_0$.
Lemma~\ref{lem:matching-boundary} therefore applies to $D^{(\rho)}$.
The metrics $g_\rho$ are complete, and for $\rho\in[0,1]$, the
coefficients of $D^{(\rho)}$ and their derivatives are uniformly
bounded in a finite lifted atlas. Moreover, $\|\sigma_{D^{(\rho)}}(z,\xi)\|=|\xi|_{g_\rho}$.
Hence the argument leading to \eqref{eq:D-H1-estimate} and
\eqref{eq:domain-D-chi} applies uniformly in $\rho$, giving
\begin{equation}\label{eq:rho-H1-estimate}
 \|u\|_{H^1(g_\rho)}
 \leq C\bigl(\|D^{(\rho)}u\|_{L^2(g_\rho)}
             +\|u\|_{L^2(g_\rho)}\bigr).
\end{equation}
and
\[
 \operatorname{Dom}D^{(\rho)}
 =\mathcal D_\chi^{(\rho)}
 =\{u\in H^1(Z,\cW_\rho):
      u|_{\partial Z}\in\mathcal B_\chi\}.
\]
The self-adjointness argument after \eqref{eq:domain-D-chi}
applies to this domain.

The maps $T_\gamma^{a,\rho}=(\beta_\rho^a\otimes1)
T_\gamma^a(\beta_\rho^a\otimes1)^{-1}$ preserve the deformed
Clifford multiplication, chirality, and connection.  Indeed, locally
$T_\gamma^a=\widehat T_\gamma^a\otimes t_\gamma^a$, where
$\widehat T_\gamma^a$ is a spin lift and $t_\gamma^a$ preserves
$\nabla^{E_a}$, by \eqref{eq:T-properties} and irreducibility of the
complex Clifford representation.  Using parallel transport along the normal direction, $t_\gamma^a$
is independent of $x$ and satisfies $t_\gamma^a\nabla_x^{E_a} =\nabla_x^{E_a}t_\gamma^a$
for every $x$, and hence also preserves the deformed connection
$\partial_x+\nabla_{h_\rho(x)}^{E_a}$.  Since both $g$ and $g_\rho$
are preserved by $\gamma$, the fiberwise isometry
$(TZ_a,g)\to(TZ_a,g_\rho)$ defining $\beta_\rho^a$ commutes with
$d\gamma$.  Therefore $\beta_\rho^a\widehat T_\gamma^a(\beta_\rho^a)^{-1}$
is a spin lift of the isometry
$\gamma:(Z_a,g_\rho)\to(Z_a,g_\rho)$ and preserves the corresponding
spin connection.
Thus \eqref{eq:twisting-curvature-under-gamma} applies to the deformed
connections, proving the descent in \ref{collar:exterior}. Their
agreement outside $C$ follows from the parallelism of $U^E$.

We next define $\beta_\rho=\beta_\rho^1\sqcup\beta_\rho^0$,
$\mu_\rho=d\operatorname{vol}_{g_\rho}/d\operatorname{vol}_g$, and
\[
 J_\rho u=\mu_\rho^{1/2}(\beta_\rho^{-1}\otimes1)u,\qquad
 \widetilde D_\rho=J_\rho D^{(\rho)}J_\rho^{-1},\qquad
 S_\rho(s)=e^{-s\widetilde D_\rho^2}.
\]
The maps $J_\rho$ are unitary on $L^2$ and uniformly bounded
isomorphisms on $H^1$.  Since $J_\rho=1$ at the boundary,
$\widetilde D_\rho$ is self-adjoint on the common domain
$\mathcal D_\chi$ and anticommutes with chirality.  Moreover,
$D^{(\rho)}T_\gamma^{a,\rho}
=T_\gamma^{a,\rho}D^{(\rho)}$.  Since both $g$ and $g_\rho$ are
$\Gamma$-invariant, so is $\mu_\rho$, and the definitions give
\[
 J_\rho T_\gamma^{a,\rho}J_\rho^{-1}=T_\gamma^a.
\]
Consequently $\widetilde D_\rho T_\gamma=T_\gamma\widetilde D_\rho$ for every $\gamma\in\Gamma$.
The coefficients of $\widetilde D_\rho$ depend smoothly on $\rho$,
with uniform bounds on the lifts of a fixed finite atlas of $M$.
Using \eqref{eq:rho-H1-estimate}, the proof of
\eqref{eq:local-boundary-estimate} applies uniformly in $\rho$.
Iterating as in \eqref{eq:heat-regularity} gives, for
$K\Subset U$ in any lifted interior or boundary chart,
$j=0,1$, $N\geq1$, and $J\Subset(0,\infty)$,
\begin{equation}\label{eq:rho-heat-regularity}
 \sup_{\rho\in[0,1],\,s\in J}
 \|\widetilde D_\rho^jS_\rho(s)u\|_{H^N(K)}
 \leq C_{K,J,j,N}\|u\|_{L^2}.
\end{equation}
The evaluation-map argument in Lemma~\ref{lem:heat-kernel} therefore
gives smooth Schwarz kernels and, uniformly for
$\rho\in[0,1]$, $s\in I\Subset(0,\infty)$, and $j=0,1$,
\[
 \|1_F\widetilde D_\rho^jS_\rho(s)\|_2
 +\|\widetilde D_\rho^jS_\rho(s)1_F\|_2
 \leq C_I.
\]
Lemma~\ref{lem:trace-identities}, applied to
$S_\rho(t/2)^2$, now shows that
\[
 \mathscr H_\rho(t)=\Str_F(S_\rho(t))
\]
is well-defined and absolutely convergent.

We next prove that $\mathscr H_\rho(t)$ is independent of $\rho$.
For $\rho,\rho+h\in[0,1]$, set $L_{\rho,h}=\widetilde D_{\rho+h}-\widetilde D_\rho$.
Smooth dependence of the coefficients gives
\[
 \|L_{\rho,h}u\|_{L^2}
 \leq C|h|\|u\|_{H^1}.
\]
Since $J_\tau^{\pm1}$ are uniformly bounded on $H^1$,
\eqref{eq:rho-H1-estimate} and the spectral theorem give, for
$\tau=\rho,\rho+h$,
\[
 \|S_\tau(s)\|\leq1,\qquad
 \|\widetilde D_\tau S_\tau(s)\|
 \leq(2es)^{-1/2},
\]
and hence
\[
 \|L_{\rho,h}S_\tau(s)f\|
 \leq C|h|(1+s^{-1/2})\|f\|.
\]
Using \eqref{eq:rho-heat-regularity} with one additional derivative
and the argument proving \eqref{eq:heat-square-integrals}, we obtain
\[
 \|1_FL_{\rho,h}S_\tau(s)\|_2
 +\|L_{\rho,h}S_\tau(s)1_F\|_2
 \leq C_I|h|,
 \qquad s\in I\Subset(0,\infty),
\]
for $\tau=\rho,\rho+h$.  The operator $L_{\rho,h}$ is symmetric on
$\mathcal D_\chi$, anticommutes with chirality, and commutes with every
$T_\gamma$. We interpret $S_\tau(s)L_{\rho,h}$ as
$(L_{\rho,h}S_\tau(s))^*$.

We now repeat the proof of the independence of $H_r(t)$ from $r$ in
Proposition~\ref{prop:heat-invariance}, with $L_{\rho,h}$ in place of
$h\Sigma^\psi$.  The preceding estimates justify taking the trace
under the integral, and
Lemma~\ref{lem:trace-identities}\ref{trace:exchange}, together with
$\widetilde D_{\rho+h}-\widetilde D_\rho=L_{\rho,h}$, gives
\[
 \mathscr H_{\rho+h}(t)-\mathscr H_\rho(t)
 =-\int_0^t\Str_F\bigl(
 L_{\rho,h}S_{\rho+h}(s)L_{\rho,h}S_\rho(t-s)\bigr)\,ds.
\]
Proceed similarly as in the proof of Proposition~\ref{prop:heat-invariance}, the preceding bounds give, for $0<s<t$,
\[
 \left|\Str_F\bigl(
 L_{\rho,h}S_{\rho+h}(s)L_{\rho,h}S_\rho(t-s)\bigr)\right|
 \leq C_t h^2\bigl(1+s^{-1/2}+(t-s)^{-1/2}\bigr).
\]
Integrating this bound gives
\[
 |\mathscr H_{\rho+h}(t)-\mathscr H_\rho(t)|
 \leq C_t h^2.
\]
Subdivision shows that $\mathscr H_\rho(t)$ is independent of $\rho$.
Finally, conjugation by $J_\rho$ preserves the supertrace density:
\[
 \str K_{S_\rho(t)}^g(z,z)\,d\operatorname{vol}_g
 =\str K_{e^{-t(D^{(\rho)})^2}}^{g_\rho}(z,z)
       \,d\operatorname{vol}_{g_\rho}.
\]
At $\rho=0,1$ this proves \eqref{eq:collar-product-heat}.
\end{proof}

\begin{lemma}\label{lem:doubled-local-formula}
For the data of Lemma~\ref{lem:collar-deformation}, we have
\begin{equation}\label{eq:doubled-local-formula}
 \begin{aligned}
 \lim_{\tau\downarrow0}\Str_F(e^{-\tau(D^{(1)})^2})
=\int_M\Ahat(TM,g_1)
  \bigl(\ch(E_1,\nabla^{E_1,1})-\ch(E_0,\nabla^{E_0,1})\bigr).
 \end{aligned}
\end{equation}
\end{lemma}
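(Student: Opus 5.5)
The plan is to reduce the small-time limit to two local computations: the interior local index density, and a contribution from the product collar, which we show cancels. In the product collar we glue the two covers along the boundary and use the doubling picture.

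\textbf{Gluing the covers.} The boundary condition \eqref{eq:matching-condition} is $u_0=\chi u_1$ with $\chi=c_0(\nu_0)U^{-1}\varepsilon_1$. In the product collar $[0,\delta]\times\partial M$ of Lemma~\ref{lem:collar-deformation}, this is exactly the transmission condition across the boundary for the manifold obtained by gluing $Z_1$ to a reflected copy of $Z_0$. Concretely, we form $\widehat Z=Z_1\cup_{\partial}\overline{Z_0}$ by identifying $\partial Z_0$ with $\partial Z_1$ via $\Phi$. Over the collar we glue the bundles by $U$, composed with the standard identification $S_{\overline{Z_0}}\cong S_{Z_0}$ that reverses orientation and uses Clifford multiplication by the normal. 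Since the metric and connections are of product form near the boundary, $\widehat Z$ is a smooth complete $\Gamma$-manifold without boundary and $\widehat Z/\Gamma$ is the double $\widehat M$ of $M$. The operator $D^{(1)}$ with domain $\mathcal B_\chi$ is unitarily equivalent to a smooth twisted Dirac operator $\widehat D$ on $\widehat Z$. The opposite chirality on $Z_0$ together with the orientation reversal of the second copy makes the grading $\varepsilon$ correspond to the natural chirality of $\widehat Z$, possibly up to a global sign that we will check via \eqref{eq:chi-boundary-identities}. I expect verifying this equivalence carefully to be the main obstacle: the signs in $\chi$, and the fact that $\varepsilon_1$ is inserted so that the chirality operators match rather than anticommute. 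The first thing to try is to check it on a product half-line model $\mathbb R_{\ge0}\times\partial M$ with $D=c(\nu)(\partial_x+A)$. There one can write the reflected section explicitly as $\tilde u(x)=c(\nu)\varepsilon\,\chi u(-x)$-type and verify smoothness of the glued section across $x=0$.

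\textbf{Small-time asymptotics.} Under the equivalence, $\Str_F(e^{-\tau(D^{(1)})^2})=\Str_{\widehat F}(e^{-\tau\widehat D^2})$, where $\widehat F$ is a fundamental domain in $\widehat Z$. Because the covering has bounded geometry pulled back from the compact quotient, the Duhamel/finite propagation comparison gives the following: the heat kernel on the diagonal agrees with the local parametrix up to $O(\tau^\infty)$, uniformly on $\widehat F$. This is the standard Atiyah $\Gamma$-index argument, using the kernel estimates of Lemma~\ref{lem:heat-kernel}. The local index theorem \cite{BGV} then gives
\[
 \lim_{\tau\downarrow0}\Str_F(e^{-\tau(D^{(1)})^2})=\int_{\widehat M}\Ahat(T\widehat M)\ch(\widehat E).
\]
The sign convention produces $-\ch$ on the reflected copy, coming from the reversed chirality.

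\textbf{Splitting the double.} Split the integral over $\widehat M$ into the $M$ half and the reflected half. On the first half the integrand is $\Ahat(TM,g_1)\ch(E_1,\nabla^{E_1,1})$. On the second half, orientation reversal combined with the chirality flip yields $+\int_M\Ahat(TM,g_1)\bigl(-\ch(E_0,\nabla^{E_0,1})\bigr)$, because the two sign changes combine consistently. The glued forms are smooth across $\partial M$ by the product structure and by Lemma~\ref{lem:collar-deformation}\ref{collar:exterior}. This gives \eqref{eq:doubled-local-formula}.

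If the gluing step resists, the fallback is to compute directly on $Z$. The interior contribution converges to the stated integral. In the collar, the method of images gives the heat kernel of $D^{(1)}$ near the boundary as the free product kernel plus a reflected term $\chi$-twisted. The reflected term has supertrace $\str(\varepsilon\,\cdot\,\chi^{\pm1}\cdots)$, and by $\chi\varepsilon_1=-\varepsilon_0\chi$ together with the two copies, its contributions from $Z_1$ and $Z_0$ cancel pointwise. The boundary term therefore vanishes as $\tau\downarrow0$.
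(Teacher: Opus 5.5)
Your proposal is correct and follows the paper's proof. Like the paper, you form the double $Z_1\cup_\Phi(-Z_0)$ with the spinor bundles glued by $\chi$ (so that $\varepsilon$ becomes the natural chirality), identify $D^{(1)}$ with the smooth Dirac operator on the double, apply a uniform local heat-kernel expansion (the paper cites Roe's Proposition~2.11) together with the BGV local computation on $\widehat F$, and split the result over $F_1$ and $F_0$. Be careful that exactly one sign arises on the $Z_0$ half: because $-\varepsilon_0$ is the natural chirality of $-Z_0$, the minus sign comes only from the reversed orientation of that half, not from a chirality flip and an orientation reversal counted separately.
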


\begin{proof}
We consider the doubles
\[
 \widehat M=M\cup_{\partial M}(-M),\qquad
 \widehat Z=Z_1\cup_\Phi(-Z_0),\qquad
 \widehat E=E_1\cup_{(U^E)^{-1}}E_0.
\]
The product structures give a smooth metric and twisting connection
on the double.  Since $\Phi$ is $\Gamma$-equivariant, the maps
$\pi_1$ and $\pi_0$ glue to a principal $\Gamma$-bundle $\widehat\pi:\widehat Z\longrightarrow\widehat M$.
The image $\widehat F$ of $F_1\sqcup F_0$ under the natural
inclusions into $\widehat Z$ is a relatively compact measurable
fundamental domain.  Under the gluing of $Z_1$ to $-Z_0$, a tangential vector
$w\in T\partial Z_0$ is identified with $\Phi_*w\in T\partial Z_1$,
while the inward normal $\nu_1$ is identified with $-\nu_0$.
The definition of $\chi$ then gives
\begin{equation}\label{eq:double-identities}
 \begin{gathered}
 \chi c_1(\Phi_*w)=c_0(w)\chi,\qquad
 \chi c_1(\nu_1)=-c_0(\nu_0)\chi,\\
 \chi\varepsilon_1=-\varepsilon_0\chi,\qquad
 T_\gamma^{0,1}\chi=\chi T_\gamma^{1,1}.
 \end{gathered}
\end{equation}
The boundary map $\chi$ is
parallel in the tangential direction and extends constantly in the normal
direction.  It glues the twisted spinor bundles to
\[
 \widehat\cS=(S_{Z_1,g_1}\otimes E_1)
       \cup_\chi(S_{Z_0,g_1}\otimes E_0).
\]
The first three identities glue Clifford multiplication and chirality,
and parallelism glues the connection.  Let $\widehat D$ be the resulting
Dirac operator, with domain $H^1(\widehat Z,\widehat\cS)$.
Locally choose a spin lift of $\Phi$, and let $\Phi_{\mathrm{spin}}$
be the induced map on spinors.  Since both $\widehat\Phi$ and
$\Phi_{\mathrm{spin}}$ preserve Clifford multiplication and the spin
connection, we have $\widehat\Phi=\lambda\,\Phi_{\mathrm{spin}}$
for a constant $\lambda\in\mathbb C$ with $|\lambda|=1$.  These constant
factors do not contribute to the curvature, so the twisting curvature
in the local index formula is that of $\widehat E$.

Let $\mathcal R:L^2(\widehat Z,\widehat{\cS})\longrightarrow L^2(Z,\cW_1;g_1)$
be the map which restricts a section to the two halves of
$\widehat Z$.  Since the common boundary has measure zero,
$\mathcal R$ is unitary, and it preserves chirality.

A pair $u=(u_1,u_0)$ with $u_a\in H^1(Z_a)$ determines an
$H^1$ section of $\widehat{\cS}$ precisely when $u_0=\chi u_1$ on $\partial Z$.
Hence $\mathcal R\operatorname{Dom}(\widehat D)=\operatorname{Dom}(D^{(1)})$.
On the interiors of the two halves, $\widehat D$ agrees with
$D_{1,1}$ and $D_{0,1}$, respectively.  Therefore
\[
 \mathcal R\widehat D\mathcal R^{-1}=D^{(1)}.
\]
Consequently,
\[
 \Str_F(e^{-\tau(D^{(1)})^2})
 =\int_{\widehat F}
   \str K_{e^{-\tau\widehat D^2}}(z,z)\,dz.
\]
The last identity in \eqref{eq:double-identities} glues the maps
$T_\gamma^{a,1}$ to maps $\widehat T_\gamma$ preserving the Clifford
multiplication, chirality, and connection.  

The complete metric on $\widehat Z$ is pulled back from the compact
manifold $\widehat M$, and hence has positive injectivity radius and
uniformly bounded curvature and its derivatives.  Since
$\widehat T_\gamma$ preserves the connection on $\widehat{\cS}$,
the curvature of $\widehat{\cS}$ and all its covariant derivatives
are uniformly bounded on $\widehat Z$.
Thus \cite[Proposition~2.11]{RoeOpenIndex} gives a uniform local
heat-kernel expansion.  The local coefficient calculation of
\cite[Theorems~4.1--4.2 and \S4.3]{BGV}, with the normalizations in
the Introduction, gives
\[
 \begin{gathered}
 \str K_{e^{-\tau\widehat D^2}}(z,z)\,d\operatorname{vol}_{\widehat g}
 \sim\sum_{j\geq0}\tau^{(j-n)/2}\str\Psi_j,\\
 \str\Psi_j=0\quad(j<n),\qquad
 \str\Psi_n=[\Ahat(T\widehat Z)\ch(\widehat E)]_{[n]}.
 \end{gathered}
\]
These coefficient identities depend only on the local data.
The remainder after the constant term is uniformly
$O(\tau^{1/2})$, so it tends to zero after integration over
$\widehat F$.  On the copy of $-Z_0$ in $\widehat Z$, the orientation
is opposite to that of $Z_0$, and the corresponding chirality on
$\cW_1$ is $-\varepsilon_0$.  Since the Chern forms descend to $M$
and each $\pi_a$ is one-to-one almost everywhere on $F_a$, we obtain
\[
 \begin{aligned}
&\int_{\widehat F}[\Ahat(T\widehat Z)\ch(\widehat E)]_{[n]}\\
 &=\int_{F_1}
 [\pi_1^*\Ahat(TM,g_1)\ch(E_1,\nabla^{E_1,1})]_{[n]}
-\int_{F_0}
 [\pi_0^*\Ahat(TM,g_1)\ch(E_0,\nabla^{E_0,1})]_{[n]},
 \end{aligned}
\]
which is the right-hand side of
\eqref{eq:doubled-local-formula}.
\end{proof}

\begin{proposition}\label{prop:boundary-formula}
For every $\psi$ used in \eqref{eq:B-family} and every $t>0$,
\begin{equation}\label{eq:boundary-formula}
 \Str_F(E_1(t))=
 \int_M\Ahat(TM)\bigl(\ch(E_1)-\ch(E_0)\bigr).
\end{equation}
\end{proposition}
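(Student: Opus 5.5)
The argument will chain the invariance and identification results already established. By Proposition~\ref{prop:heat-invariance}, $\Str_F(E_1(t))=H_1(t)=H_0(t)=H_0(\tau)$ for every $\tau>0$. The rest of the proof evaluates $H_0$ as $\tau\downarrow0$ and checks that deforming to product structure near the boundary does not change the characteristic integral.

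First, Lemma~\ref{lem:collar-deformation}\ref{collar:heat} gives $H_0(\tau)=\Str_F(e^{-\tau(D^{(1)})^2})$ for every $\tau>0$. Since the left side is independent of $\tau$, so is the right side. Lemma~\ref{lem:doubled-local-formula} then identifies its limit as $\tau\downarrow0$, giving
\[
 \Str_F(E_1(t))=\int_M\Ahat(TM,g_1)\bigl(\ch(E_1,\nabla^{E_1,1})-\ch(E_0,\nabla^{E_0,1})\bigr).
\]

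It remains to replace $(g_1,\nabla^{E_a,1})$ by the original data $(g,\nabla^{E_a})$ in this integral. I would use the family $(g_\rho,\nabla^{E_a,\rho})$, $\rho\in[0,1]$, from the proof of Lemma~\ref{lem:collar-deformation}. The Chern--Weil transgression gives forms
\[
 \ch(E_a,\nabla^{E_a,1})-\ch(E_a,\nabla^{E_a,0})=d\,\widetilde{\ch}_a,\qquad
 \Ahat(g_1)-\Ahat(g_0)=d\,\widetilde{\Ahat},
\]
built from $\partial_\rho$ of the connections and metrics, i.e.\ integrals over $\rho$ of $\tr\bigl(\dot\nabla\,\exp(\text{curvature})\bigr)$-type expressions. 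The transgression forms for $E_a$ are $\Gamma$-invariant because the $T^a_\gamma$ preserve all deformed connections (shown in the lemma), so they descend to $M$. The main point is that the difference integrand
\[
 \Ahat\,(\ch(E_1)-\ch(E_0))
\]
changes by an exact form whose primitive vanishes on $\partial M$. Two facts should give this. On $M\setminus C$, the parallel isometry $U^E$ intertwines the entire families $\nabla^{E_0,\rho}$ and $\nabla^{E_1,\rho}$, because $U^E$ is $x$-independent and commutes with each $\nabla^{E}_x$. Hence $\widetilde{\ch}_1=\widetilde{\ch}_0$ on $M\setminus C$. Writing
\[
 \Ahat_1\ch_1^1-\Ahat_0\ch_1^0=d\widetilde{\Ahat}\,\ch_1^1+\Ahat_0\,d\widetilde{\ch}_1
\]
and subtracting the corresponding expression for $E_0$, the total change is $d$ of
\[
 \widetilde{\Ahat}\,(\ch_1^1-\ch_0^1)+\Ahat_0(\widetilde{\ch}_1-\widetilde{\ch}_0),
\]
up to sign conventions. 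This form is supported in $C$, since $\ch_1^1=\ch_0^1$ and $\widetilde{\ch}_1=\widetilde{\ch}_0$ outside $C$. Stokes then gives zero boundary contribution. Even more simply, the deformation is supported in the collar $[0,4\delta)\times\partial M\subset M\setminus C$, where every term of the primitive vanishes.

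I expect the main obstacle to be bookkeeping rather than analysis. One has to verify that the transgression forms are genuinely equal on $M\setminus C$ (they are, because both are computed from connections intertwined by $U^E$ along the whole path) and that the even-degree signs work out. The supertrace side needs no further work, since the $\tau$-independence and the identification at $\rho=1$ are already supplied by Proposition~\ref{prop:heat-invariance}, Lemma~\ref{lem:collar-deformation}, and Lemma~\ref{lem:doubled-local-formula}.
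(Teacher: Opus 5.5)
Your proposal is correct. Its first half, $\Str_F(E_1(t))=H_1(t)=H_0(\tau)=\Str_F(e^{-\tau(D^{(1)})^2})$ followed by Lemma~\ref{lem:doubled-local-formula}, is exactly the paper's argument. The routes differ only in how you return from $(g_1,\nabla^{E_a,1})$ to the original data.

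You build Chern--Weil transgression forms along the path $(g_\rho,\nabla^{E_a,\rho})$. You then check that the $\widetilde{\ch}_a$ descend to $M$ and agree on $M\setminus C$, and conclude via Stokes, finally noting that the primitive vanishes identically.

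The paper needs none of this. By Lemma~\ref{lem:collar-deformation}\ref{collar:exterior}, the deformed integrand $\Ahat(TM,g_1)\bigl(\ch(E_1,\nabla^{E_1,1})-\ch(E_0,\nabla^{E_0,1})\bigr)$ vanishes on $M\setminus C$. The original integrand also vanishes there, by the remark preceding Theorem~\ref{thm:vanishing}. By Lemma~\ref{lem:collar-deformation}\ref{collar:fixed}, the two agree on a neighborhood of $C$. Hence the two integrands coincide pointwise.

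Your closing observation that the primitive is identically zero is this same pointwise equality in disguise. So the transgression forms, the verification of their $\Gamma$-invariance and $U^E$-compatibility, and the Stokes step are superfluous here. What your route buys is generality: it would still work for a deformation reaching into $C$, provided $U^E$ and the $T_\gamma$ intertwine the whole family outside $C$. The paper's one-line argument relies on the deformation being confined to the collar, away from a neighborhood of $C$.
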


\begin{proof}
Proposition~\ref{prop:heat-invariance} and
Lemma~\ref{lem:collar-deformation} give, for every $\tau>0$,
\[
 H_1(t)=H_0(\tau)=\Str_F(e^{-\tau(D^{(1)})^2}).
\]
Letting $\tau\to 0$ and applying
Lemma~\ref{lem:doubled-local-formula}, we obtain the characteristic
integral for the deformed metric and connections.  Since $\ch(E_1,\nabla^{E_1,1})-\ch(E_0,\nabla^{E_0,1})$
vanishes on $M\setminus C$, its integrand is supported in $C$.
On a neighborhood of $C$, the metric and connections agree with the
original ones.  Hence the deformed characteristic integral equals
\[
 \int_M\Ahat(TM)\bigl(\ch(E_1)-\ch(E_0)\bigr),
\]
which proves \eqref{eq:boundary-formula}.
\end{proof}


\subsection{The long-neck estimate}

\begin{proposition}\label{prop:long-neck}
Under the assumptions of Theorem~\ref{thm:vanishing}, there is a
nonnegative function $\psi\in C^\infty(M)$, zero near $C$ and constant near
$\partial M$, such that
\begin{equation}\label{eq:gap}
 \|B_1^\psi u\|_{L^2}^2\geq\frac{\sigma}{16}\|u\|_{L^2}^2,
 \qquad u\in\mathcal D_\chi.
\end{equation}
\end{proposition}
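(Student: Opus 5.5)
We expand $\|B_1^\psi u\|^2=\|Du\|^2+\|\Sigma^\psi u\|^2+2\operatorname{Re}\langle Du,\Sigma^\psi u\rangle$ and treat the cross term by an integration by parts.

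\emph{Cross term.} Since $U$ is parallel and intertwines Clifford multiplication, while $\varepsilon_a$ anticommutes with $c_a$, we have $D\Sigma^\psi+\Sigma^\psi D=c(d\psi)\widetilde\Sigma$. Here $\widetilde\Sigma$ is the unitary, self-adjoint endomorphism obtained from $\Sigma^\psi$ by replacing $\psi$ with $1$ on $\operatorname{supp}d\psi$. Green's formula then gives
\[
 2\operatorname{Re}\langle Du,\Sigma^\psi u\rangle
 =\langle c(d\psi)\widetilde\Sigma u,u\rangle+\text{boundary term}.
\]
The boundary term is $-\sum_a\int_{\partial Z_a}\langle c_a(\nu_a)u_a,(\Sigma^\psi u)_a\rangle$. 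I will check that it has a sign, preferably $\geq0$, after choosing the sign of the constant value $\psi|_{\partial M}$. With $\psi\ge0$ constant near $\partial M$ and $u_0=\chi u_1$, the definition of $\chi=c_0(\nu_0)U^{-1}\varepsilon_1$ reduces this term to $\pm2\psi|_{\partial M}\int_{\partial Z_1}|u_1|^2$. If the sign comes out wrong, I will replace $\Sigma^\psi$ by its negative; only $\psi\ge0$ matters, and the sign convention is fixed by the boundary condition. Hence, up to this sign check,
\[
 \|B_1u\|^2\ge\|Du\|^2+\int_Z(\psi^2-|d\psi|)|u|^2 .
\]

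\emph{Lichnerowicz on the interior.} By \eqref{eq:SL}, $\Sc\ge\sigma$ and $\|\cR^{E_a}\|\le\sigma/8$, so $D^2\ge\sigma/8$ pointwise on the curvature side. To make this global I must handle the boundary term from integrating $\langle\nabla^*\nabla u,u\rangle$. This term involves $\langle Au,u\rangle$ and $H$, and under the matching condition it cancels between $Z_1$ and $Z_0$. The reason is $A_0\chi=-\chi A_1$ (Lemma~\ref{lem:matching-boundary}) together with $\chi$ being unitary; equivalently, one computes on the double, as in Lemma~\ref{lem:doubled-local-formula}, with the original metric but with the gluing still isometric at the boundary. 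This yields $\|Du\|^2\ge\int(\Sc/4-\sigma/8)|u|^2\ge\frac{\sigma}{8}\|u\|^2$. A cruder but safer alternative is a partition-of-unity, IMS-type argument, which uses Lichnerowicz only where $\psi$ is small.

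\emph{Choice of $\psi$.} Put $L=\dist(C,\partial M)>2\pi/\sqrt\sigma$. The Lichnerowicz bound alone may lose the full $\sigma/8$, so I want $\psi^2-|d\psi|\ge-\sigma/16$ everywhere and $\psi^2-|d\psi|\ge0$ near $\partial M$. The natural choice is $\psi=f(\dist(\cdot,C))$, smoothed, where $f$ solves the Riccati equation $f'=f^2+\sigma/16$ on $[0,L]$. This gives $f(s)=\frac{\sqrt\sigma}{4}\tan\bigl(\frac{\sqrt\sigma}{4}s\bigr)$, which stays finite exactly when $L<2\pi/\sqrt\sigma$. That is the wrong direction: the hypothesis is $L>2\pi/\sqrt\sigma$, so we have room to spare. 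I will therefore keep $f$ bounded on $[0,L)$, reaching a value $\Lambda$ with $\Lambda^2\ge\sigma/16$ before the boundary, and then make it constant. A cutoff of $\tan$ on $[0,L']$ with $L'<L$ works. The constant $\sqrt\sigma/4$ and the threshold should be tuned so that $\sigma/8-\sigma/16=\sigma/16$ survives, allowing a smoothing error absorbed by the strict inequality. Then $\|B_1u\|^2\ge\int(\sigma/8+\psi^2-|d\psi|)|u|^2\ge\frac{\sigma}{16}\|u\|^2$.

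\emph{Main obstacle.} I expect the hardest step to be the boundary bookkeeping. One must verify that the Lichnerowicz boundary term cancels under the matching condition, and that the Callias cross-term boundary contribution has the favorable sign for $\psi\ge0$. Both reduce to the identities in \eqref{eq:chi-boundary-identities}, and I would check them first with $u$ smooth, then extend by density using \eqref{eq:domain-D-chi}.
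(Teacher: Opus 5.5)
\textbf{Gap: the mean-curvature boundary term does not cancel.} Your cross-term computation is fine. Its boundary contribution is in fact $+\int_{\partial Z}\psi|u|^2=2\psi|_{\partial M}\int_{\partial Z_1}|u_1|^2$, since $\Sigma^\psi u=-\psi c(\nu)u$ on $\partial Z$. So no sign flip is needed, and none would be permitted, because $\Sigma^\psi$ and $\psi\geq0$ are fixed by \eqref{eq:B-family} and the statement.

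The problem is your Lichnerowicz step. Integrating \eqref{eq:SL} over $Z$ produces the boundary term $\int_{\partial Z}\bigl(\frac H2|u|^2-\langle Au,u\rangle\bigr)$. Under $u_0=\chi u_1$, only the $\langle Au,u\rangle$ part cancels, via $A_0\chi=-\chi A_1$. The mean-curvature parts add rather than cancel: $\partial Z_1$ and $\partial Z_0$ both lie over $\partial M$ with the same inward normal and $\chi$ is unitary, so you are left with $\int_{\partial Z_1}H|u_1|^2$, and $H$ may be negative.

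Your doubling heuristic fails for the same reason. With the original metric, the double is only Lipschitz across the seam and carries a singular scalar-curvature term proportional to $H$ there. This is exactly why the paper passes to product form in Lemma~\ref{lem:collar-deformation} before doubling. Hence $\|Du\|^2\geq\frac\sigma8\|u\|^2$ on $\mathcal D_\chi$ is false in general. If it held, $\psi=0$ would already prove the proposition and the long-neck hypothesis would play no role, which should have been a warning sign.

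\textbf{Missing idea: use the cross-term boundary contribution to absorb the negative part of $H/2$.} Do not discard the positive boundary term from the cross term. Keeping it, you get
\[
 \|B_1^\psi u\|^2\geq\|\nabla u\|^2+\int_Z\Bigl(\frac\sigma8+\psi^2-|d\psi|\Bigr)|u|^2+\int_{\partial Z}\Bigl(\frac H2+\psi\Bigr)|u|^2 .
\]
So besides $|d\psi|\leq\psi^2+\sigma/16$, you need the plateau value of $\psi$ at $\partial M$ to exceed $\max_{\partial M}\max\{0,-H/2\}$.

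This is where $\dist(C,\partial M)>2\pi/\sqrt\sigma$ enters. The Riccati solution $\frac{\sqrt\sigma}4\tan\bigl(\frac{\sqrt\sigma}4s\bigr)$ blows up at $s=2\pi/\sqrt\sigma$, before the boundary. After slowing it down by a cutoff factor $\theta\in[0,1]$ and compensating for the Lipschitz constant $1+\eta$ of the smoothed distance function, it can be driven to any prescribed finite height and then frozen. The paper does exactly this with $m_0=1+\max_{\partial M}\max\{0,-H/2\}$.

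Your condition $\Lambda^2\geq\sigma/16$ is therefore not the relevant one. Your remark that blow-up before $\partial M$ is ``the wrong direction'' has it backwards: that blow-up is precisely what makes an arbitrarily large plateau available.
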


\begin{proof}
Define 
\[
 \lambda_0=\frac{\sigma}{16},\qquad
 L_0=\frac{\pi}{2\sqrt{\lambda_0}}
     =\frac{2\pi}{\sqrt\sigma},
\]
and choose $\eta\in(0,1)$ and $R$ with
$(1+\eta)L_0<R<\dist(C,\partial M)$.
Lemma~\ref{lem:smooth-plateau} gives
$r\in C^\infty(M;[0,R])$ satisfying
\[
 r=0\ \text{near }C,\qquad
 r=R\ \text{near }\partial M,\qquad |dr|\leq1+\eta.
\]
Set $m_0=1+\max_{\partial M}\max\{0,-H/2\}$.
Since $\arctan(m_0/\sqrt{\lambda_0})<\pi/2$ and
$(1+\eta)L_0<R$, we may choose
$\theta\in C^\infty([0,R];[0,1])$, which is zero near $R$, such that
\[
 \frac{1+\eta}{\sqrt{\lambda_0}}
   \arctan\frac{m_0}{\sqrt{\lambda_0}}
 <\int_0^R\theta(s)\,ds<(1+\eta)L_0.
\]
Define
\[
 \Theta(t)=\int_0^t\theta(s)\,ds,\qquad
 \varphi(t)=\sqrt{\lambda_0}\tan\left(
  \frac{\sqrt{\lambda_0}}{1+\eta}\Theta(t)\right),
 \qquad \psi=\varphi\circ r.
\]
The function $\varphi$ is
nonnegative, smooth, constant near $R$, and satisfies
\[
 \varphi(0)=0,\qquad \varphi(R)>m_0,\qquad
 \varphi'(t)=\frac{\theta(t)}{1+\eta}
              \bigl(\varphi(t)^2+\lambda_0\bigr).
\]
Consequently, $\psi$ is nonnegative and smooth, zero near $C$,
and constant near $\partial M$, with
\begin{equation}\label{eq:psi-estimates}
 |d\psi|\leq\psi^2+\frac{\sigma}{16},
 \qquad\text{and}\quad 
 \frac H2+\psi>0\quad\text{on }\partial M.
\end{equation}

For $u$ smooth up to the boundary with compact support, integrating
\eqref{eq:SL} gives
\[
 \|Du\|_{L^2}^2
 =\|\nabla u\|_{L^2}^2
  +\int_Z\left\langle
   \left(\frac{\Sc}{4}+\cR\right)u,u\right\rangle
  +\int_{\partial Z}\left(\frac H2|u|^2-\langle Au,u\rangle\right),
\]
where $\cR=\operatorname{diag}(\cR^{E_1},\cR^{E_0})$ and the
normal is inward.  Imposing the boundary condition $u_0=\chi u_1$ and using the identity
$A_0\chi=-\chi A_1$, and changing variables by $\Phi$, we obtain
\[
 \int_{\partial Z}\langle Au,u\rangle
 =\int_{\partial Z_1}\langle A_1u_1,u_1\rangle
  -\int_{\partial Z_0}\langle\chi A_1u_1,\chi u_1\rangle=0.
\]

The identities $D_1U=UD_0$ and
$D_a\varepsilon_a=-\varepsilon_aD_a$ give
\[
 (D\Sigma^\psi+\Sigma^\psi D)u
 =\bigl(c_1(d\psi)U\varepsilon_0u_0,\,
         c_0(d\psi)U^{-1}\varepsilon_1u_1\bigr)
\]
over $\pi^{-1}(M\setminus C)$, and the expression vanishes near $\pi^{-1}(C)$.  Since $\Phi$
preserves volume and the pullback of $|d\psi|$, the inequality
$2ab\leq a^2+b^2$ yields
\[
 \left|\int_Z\langle(D\Sigma^\psi+\Sigma^\psi D)u,u\rangle\right|
 \leq\int_Z|d\psi|\,|u|^2.
\]
Moreover, the definition of $\chi$ gives
\[
 \begin{aligned}
 U\varepsilon_0u_0
 &=U\varepsilon_0c_0(\nu_0)U^{-1}\varepsilon_1u_1
   =-c_1(\nu_1)u_1,\\
 c_0(\nu_0)u_0&=-U^{-1}\varepsilon_1u_1.
 \end{aligned}
\]
Thus $\Sigma^\psi u=-\psi c(\nu)u$ on $\partial Z$.
Green's formula therefore gives
\[
 \begin{aligned}
 2\operatorname{Re}\langle Du,\Sigma^\psi u\rangle_{L^2}
 &=\int_Z\langle(D\Sigma^\psi+\Sigma^\psi D)u,u\rangle
   -\int_{\partial Z}\langle c(\nu)u,\Sigma^\psi u\rangle\\
 &\geq-\int_Z|d\psi|\,|u|^2
        +\int_{\partial Z}\psi|u|^2.
 \end{aligned}
\]
Combining these identities with $(\Sigma^\psi)^2=\psi^2I$ and
$\Sc/4+\cR\geq\sigma/8$, we obtain
\begin{equation}\label{eq:energy}
 \begin{aligned}
 \|B_1^\psi u\|_{L^2}^2
 \geq{}&\|\nabla u\|_{L^2}^2
 +\int_Z\left(\frac{\sigma}{8}+\psi^2-|d\psi|\right)|u|^2
 +\int_{\partial Z}\left(\frac H2+\psi\right)|u|^2.
 \end{aligned}
\end{equation}
By density, the trace theorem, and the boundedness
$A:H^{1/2}\to H^{-1/2}$, \eqref{eq:energy} extends to every
$u\in\mathcal D_\chi$.

Using \eqref{eq:psi-estimates} in \eqref{eq:energy} gives
\[
 \|B_1^\psi u\|_{L^2}^2
 \geq\frac{\sigma}{16}\|u\|_{L^2}^2,
 \qquad u\in\mathcal D_\chi,
\]
which proves \eqref{eq:gap}.
\end{proof}


\subsection{Proof of the vanishing theorem}
\begin{proof}[Proof of Theorem~\ref{thm:vanishing}]
Choose $\psi$ from Proposition~\ref{prop:long-neck}, and set
$B_1=B_1^\psi$ and $\delta=\sigma/16$.  Then $B_1^2\geq\delta$.
For $t\geq1$, the spectral theorem gives
\[
 0\leq E_1(t)\leq e^{-(t-1)\delta}E_1(1).
\]
Multiplying on the left and right by $1_F$ gives
\[
 0\leq1_FE_1(t)1_F
 \leq e^{-(t-1)\delta}1_FE_1(1)1_F.
\]
Lemma~\ref{lem:trace-identities}, applied to
$P=Q=E_1(t/2)$, shows that
$1_F\varepsilon E_1(t)1_F$ is trace class for every $t>0$.
Since $\varepsilon$ commutes with $1_F$ and $\varepsilon^2=1$,
\[
 1_FE_1(t)1_F
 =\varepsilon\bigl(1_F\varepsilon E_1(t)1_F\bigr)
\]
is trace class as well.  Taking traces in the preceding inequality,
we obtain
\[
 \begin{aligned}
 |H_1(t)|
 \leq\Tr(1_FE_1(t)1_F)
 \leq e^{-(t-1)\delta}\Tr(1_FE_1(1)1_F)
 \longrightarrow0
 \qquad \text{as}\ t\to\infty.
 \end{aligned}
\]
Proposition~\ref{prop:heat-invariance} shows that $H_1(t)$ is
independent of $t$, and hence $H_1(t)=0$.  Proposition~\ref{prop:boundary-formula}
then gives
\[
 \int_M\Ahat(TM)\bigl(\ch(E_1)-\ch(E_0)\bigr)=0,
\]
which proves Theorem~\ref{thm:vanishing}.
\end{proof}


\vspace{0.5cm}

\section{Proof under the relative $K$-area assumption}\label{sec:K}

Assume alternative~\ref{alt:K} in Theorem~\ref{thm:main} and, for a contradiction, fix a
complete metric $g$ on $X$ with $\Sc_g\geq\sigma>0$.  We construct almost-flat bundles and apply Theorem~\ref{thm:vanishing}
to obtain contradiction.

\begin{lemma}\label{lem:K-compact}
There is a constant $b_n>0$, depending only on $n$, such that, for all
sufficiently large $i$, there are connected compact smooth domains
$M_i\subset X^\circ$, compact smooth domains
$C_i\Subset M_i^\circ$ (possibly disconnected), equal-rank Hermitian
bundles $A_{1,i},A_{0,i}\to M_i$, and a unitary map
\[
 V_i^A:A_{0,i}\longrightarrow A_{1,i}
 \qquad\text{over }M_i\setminus C_i
\]
with the following properties:
\begin{enumerate}[label=\textup{(K\arabic*)},ref=\textup{(K\arabic*)},
                  leftmargin=3.4em]
\item\label{K:distance}
$\partial M_i\ne\varnothing$ and
$\dist(C_i,\partial M_i)\to\infty$.

\item\label{K:exterior}
The bundles are flat on $M_i\setminus C_i$, and $V_i^A$ is parallel
there.

\item
$\max_a\|R^{A_{a,i}}\|_g\leq b_n/i$.

\item\label{K:pairing}
\[
 \int_{M_i}\Ahat(TM_i)
 \bigl(\ch(A_{1,i})-\ch(A_{0,i})\bigr)\ne0.
\]
\end{enumerate}
\end{lemma}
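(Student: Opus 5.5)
We reduce the relative $K$-area hypothesis to bundle data on a compact domain by unwinding Gromov's definition and then truncating at a large distance. Recall that infinite relative $K$-area of $X^\circ$ with respect to $X^\circ\setminus X_i^\circ$ means the following. For every $\epsilon>0$ there are Hermitian bundles $A_1,A_0\to X^\circ$ with unitary connections and $\|R^{A_a}\|_g\leq\epsilon$. These bundles come with a parallel unitary isomorphism $A_0\to A_1$ over $X^\circ\setminus K$, where $K\subset X_i^\circ$ is compact and both bundles are flat there. Finally, some Chern character number of $\ch(A_1)-\ch(A_0)$, which is compactly supported in $K$, is nonzero.

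The first step is to pass from "some Chern number nonzero" to the $\Ahat$-twisted pairing in \ref{K:pairing}. For this we follow the standard argument of Gromov--Lawson, as in \cite[Lemma~5]{BaerHankeKCowaist}. The nonzero component of the relative Chern character lives in some degree $2j$. We apply Adams operations, or equivalently tensor products and exterior powers of $A_a$, to produce new pairs whose $\ch$-difference has nonzero top-degree term against $\Ahat$. Since $\Ahat=1+(\text{higher terms})$ and the top-degree contributions can be isolated by a triangular linear combination, some combination in the virtual representation ring gives a nonzero $\Ahat$-pairing.

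These operations multiply curvature norms only by a constant depending on $n$. The bundle ranks may grow, but ranks are unconstrained. This accounts for the universal constant $b_n$: we choose $\epsilon=1/i$ in the definition and absorb the constant. Because the new bundles are polynomial in the old ones, they are still flat and parallel-isomorphic outside $K$.

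Next we choose the domains. Put $C_i$ equal to a compact smooth domain with $K\subset C_i^\circ\subset C_i\subset X_i^\circ$, obtained by smoothing a small neighborhood of $K$. Since $X_i^\circ$ escapes every compact set, $\dist(X_i^\circ,X\setminus X^\circ)\to\infty$ along a subsequence. We reindex so that this holds for all large $i$; the statement only asserts the conclusion for sufficiently large $i$.

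The distance is measured in the complete metric $g$ restricted to $X^\circ$. This is where we use that $X\setminus X^\circ$ is compact and $X_i^\circ$ leaves every compact set. We then take $M_i$ to be a smoothed sublevel set $\{x:\dist(x,C_i)\leq d_i\}$, with $d_i=\tfrac12\dist(C_i,X\setminus X^\circ)-1$. We smooth the distance function using the appendix construction (Lemma~\ref{lem:smooth-plateau} type), choosing a regular value. $M_i$ is compact by completeness (Hopf--Rinow), and it lies in $X^\circ$. We take the connected component containing $C_i$, or more precisely the union with thin tubes joining the components of $C_i$, which stay within bounded distance. This component has nonempty boundary because $X$ is noncompact, and $\dist(C_i,\partial M_i)\geq d_i-1\to\infty$.

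Restricting the bundles and the isomorphism to $M_i$ gives \ref{K:exterior} and the curvature bound. The pairing over $M_i$ equals the pairing over $X^\circ$ because the integrand is supported in $K$. Note that $\ch(A_{1,i})-\ch(A_{0,i})$ vanishes where the bundles are flat and parallel-isomorphic.

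The main obstacle is the algebraic step producing a nonzero $\Ahat$-twisted pairing with curvature controlled by $b_n/i$. There are two issues. First, the precise form of Gromov's definition (nonzero "some Chern number" versus nonzero index) must be matched. Second, the curvature of the bundles built from Adams operations must be bounded linearly. I would first follow the proof of \cite[Lemma~5]{BaerHankeKCowaist}, adapted verbatim to relative pairs trivialized outside $K$.
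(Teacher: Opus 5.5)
Your overall route matches the paper's: Gromov's definition with $\varepsilon=1/i$, the Adams-operation trick from B\"ar--Hanke to get a nonzero $\Ahat$-twisted pairing with curvature at most $b_n/i$, then truncation to a compact sublevel set of a smoothed distance function. The step that fails is the localization. You require the connected domain $M_i$ to contain the whole support ($K\subset C_i\subset M_i$), so that $\int_{M_i}=\int_{X^\circ}$, and you join the components of the sublevel set by thin tubes. Nothing bounds the mutual distances of the components of $X_i^\circ$. More seriously, $X^\circ$ is only an open set with compact complement and may be disconnected, for instance when $X$ has several ends. If the support meets two components of $X^\circ$, no connected $M_i\subset X^\circ$ contains it, and you cannot leave $X^\circ$ because the bundles are only defined there. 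The missing idea is to localize by additivity rather than containment. After the Adams step you have a top-degree form $\Theta_i$ supported in $X_i^\circ$ with $\int_{X^\circ}\Theta_i\ne0$. For the compact sublevel set $V_i\supset X_i^\circ$ one has $\int_{V_i}\Theta_i=\int_{X^\circ}\Theta_i$, so some connected component $M_i$ of $V_i$ has $\int_{M_i}\Theta_i\ne0$. Then take $C_i$ to be a compact smooth neighborhood of $X_i^\circ\cap M_i$ inside its unit neighborhood. This is why the lemma allows $C_i$ to be disconnected; no tubes are needed, and $\dist(C_i,\partial M_i)\to\infty$ follows from the choice of level. Also, $\dist(X_i^\circ,X\setminus X^\circ)\to\infty$ holds for the whole sequence by Hopf--Rinow, so no subsequence is needed.

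Two further points need repair. First, you cannot in general arrange $K\subset C_i^\circ\subset C_i\subset X_i^\circ$, since the compact set $X_i^\circ$ may have empty interior. What (K2) needs is the reverse inclusion $X_i^\circ\cap M_i\subset C_i$, so that $M_i\setminus C_i$ lies where the bundles are flat and parallel-isomorphic. Second, a nonzero relative Chern number is a product of Chern classes of total degree $n$; it does not yield a single nonzero component of $\ch(\xi_i)$. For example, $\int c_1(\xi_i)^k$ may be nonzero while $\int\ch_k(\xi_i)=0$, and a one-variable triangular isolation only detects $\int\ch_k(\xi_i)$. The universal identities give a nonzero monomial $\int_{X^\circ}\prod_{\ell=1}^p\ch_{j_\ell}(\xi_i)$ with $j_1+\cdots+j_p=k$. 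You then need the $p$-variable polynomial $\int[\Ahat\prod_\ell\sum_q a_\ell^q\ch_q(\xi_i)]_{[n]}$, whose $a_1^{j_1}\cdots a_p^{j_p}$-coefficient is that monomial because only $\Ahat_0$ contributes in top degree. Correspondingly, you need the $p$-fold virtual tensor product of Adams operations, realized as an even/odd pair of actual bundles with an explicitly constructed parallel unitary isomorphism outside the support.
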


\begin{proof}
By the  formulation in
\cite[Theorem~5a]{GromovFourLectures}, applied with
$\varepsilon=1/i$, there are Hermitian bundles
$\widetilde B_{1,i},\widetilde B_{0,i}\to X^\circ$ with unitary
connections such that
\[
 \max_a\|R^{\widetilde B_{a,i}}\|_g\leq\frac1i,
 \qquad R^{\widetilde B_{a,i}}=0
 \quad\text{on }X^\circ\setminus X_i^\circ,
\]
a parallel isomorphism
$\widetilde V_i:\widetilde B_{0,i}\to\widetilde B_{1,i}$ on that
set, and a nonzero relative Chern number of the virtual bundle
\[
 \xi_i=\widetilde B_{1,i}-\widetilde B_{0,i}.
\]
Here a virtual bundle denotes a formal difference of actual bundles.
Since $\widetilde V_i^*\widetilde V_i$ is parallel, replacing
$\widetilde V_i$ by
$\widetilde V_i(\widetilde V_i^*\widetilde V_i)^{-1/2}$ makes it
unitary.  Every component of $X^\circ$ is noncompact and therefore
meets $X^\circ\setminus X_i^\circ$, so the two bundles have equal
rank on every component and $\ch_0(\xi_i)=0$.

For $q\geq1$, write
$\ch_q(\xi_i)=\ch_q(\widetilde B_{1,i})-\ch_q(\widetilde B_{0,i})$.
These forms vanish outside $X_i^\circ$ because $\widetilde V_i$ is
parallel there.  The universal rational identities between Chern
classes and Chern characters, which are also used in
\cite[\S2]{BaerHankeKCowaist}, express a relative Chern number as a
linear combination of their monomial integrals.  Thus there are
positive integers $j_1,\ldots,j_p$ with $j_1+\cdots+j_p=k$ such that
\begin{equation}\label{eq:K-chern-character-monomial}
 \int_{X^\circ}\prod_{\ell=1}^p\ch_{j_\ell}(\xi_i)\ne0.
\end{equation}
For $\mathbf a=(a_1,\ldots,a_p)\in\mathbb R^p$, define
\begin{equation}\label{eq:K-Adams-polynomial}
 P_i(\mathbf a)=
 \int_{X^\circ}
 \left[
 \Ahat(TX^\circ)\wedge
 \prod_{\ell=1}^p
 \left(\sum_{q=1}^k a_\ell^q\ch_q(\xi_i)\right)
 \right]_{[n]}.
\end{equation}
Since each $\ch_q(\xi_i)$ vanishes outside $X_i^\circ$, all the
Chern-character monomials above have compact support. The product $\prod_\ell\ch_{j_\ell}(\xi_i)$ has degree
$2(j_1+\cdots+j_p)=2k=n$.  Only $\Ahat_0=1$ can therefore
contribute to the coefficient of $a_1^{j_1}\cdots a_p^{j_p}$ in
$P_i$, which is \eqref{eq:K-chern-character-monomial}.
The degree of $P_i$ in each variable is at most $k$.  If
$P_i(\mathbf a)=0$ for every
$\mathbf a\in\{1,\ldots,k+1\}^p$, then $P_i$ would vanish as a
polynomial, so the displayed coefficient would be zero, contrary to
\eqref{eq:K-chern-character-monomial}.  Thus we can choose
$\mathbf a\in\{1,\ldots,k+1\}^p$ with $P_i(\mathbf a)\ne0$,
as in the proof of \cite[Lemma~5]{BaerHankeKCowaist}.

Define
\begin{equation}\label{eq:K-relative-top-form}
 \Theta_i=
 \left[
 \Ahat(TX^\circ)
 \prod_{\ell=1}^p
 \left(\sum_{q=1}^k a_\ell^q\ch_q(\xi_i)\right)
 \right]_{[n]}.
\end{equation}
Since the forms $\ch_q(\xi_i)$ vanish outside $X_i^\circ$,
$\supp\Theta_i\subset X_i^\circ$ and
$\int_{X^\circ}\Theta_i=P_i(\mathbf a)\ne0$.
We apply Lemma~\ref{lem:comparison-domains}\textup{(ii)} with
$K_i=X_i^\circ$ and $\Theta_i$.  For all sufficiently large $i$ it
gives $M_i,C_i$ satisfying \ref{K:distance}, with
\[
 X_i^\circ\cap M_i\subset C_i,\qquad
 \int_{M_i}\Theta_i\ne0.
\]

For the chosen integers, we use the construction in the proof of
\cite[Lemma~5]{BaerHankeKCowaist}, equations~(3)--(4).  For
$B=\widetilde B_{1,i}|_{M_i}$ or $\widetilde B_{0,i}|_{M_i}$, it gives
\[
 \Psi_{a_\ell}B=\Psi_{a_\ell}^+(B)-\Psi_{a_\ell}^-(B),
 \qquad
 \ch_q(\Psi_{a_\ell}B)=a_\ell^q\ch_q(B),
 \quad 0\leq q\leq k,
\]
where $\Psi_{a_\ell}^\pm(B)$ are actual bundles obtained from finite
direct sums of tensor products of exterior powers of $B$, with the
induced metrics and connections.  Define bundles over $M_i$ by
\[
\begin{aligned}
 C_{1,\ell}&=\Psi_{a_\ell}^+(\widetilde B_{1,i}|_{M_i})
            \oplus\Psi_{a_\ell}^-(\widetilde B_{0,i}|_{M_i}),\\
 C_{0,\ell}&=\Psi_{a_\ell}^-(\widetilde B_{1,i}|_{M_i})
            \oplus\Psi_{a_\ell}^+(\widetilde B_{0,i}|_{M_i}).
\end{aligned}
\]
Then $\Psi_{a_\ell}(\xi_i|_{M_i})=C_{1,\ell}-C_{0,\ell}$ and
\[
 \ch(C_{1,\ell})-\ch(C_{0,\ell})
 =\left(\sum_{q=1}^k a_\ell^q\ch_q(\xi_i)\right)\Big|_{M_i}.
\]
Define the two bundles by
\[
\begin{aligned}
 A_{1,i}
 &=\bigoplus_{\substack{\epsilon\in\{0,1\}^p\\
              \#\{\ell:\epsilon_\ell=0\}\ \mathrm{even}}}
      \bigotimes_{\ell=1}^p C_{\epsilon_\ell,\ell},\\
 A_{0,i}
 &=\bigoplus_{\substack{\epsilon\in\{0,1\}^p\\
              \#\{\ell:\epsilon_\ell=0\}\ \mathrm{odd}}}
      \bigotimes_{\ell=1}^p C_{\epsilon_\ell,\ell}.
\end{aligned}
\]
Expanding $\bigotimes_{\ell=1}^p(C_{1,\ell}-C_{0,\ell})$ gives
\[
 \ch(A_{1,i})-\ch(A_{0,i})
 =\left[
 \prod_{\ell=1}^p
 \left(\sum_{q=1}^k a_\ell^q\ch_q(\xi_i)\right)
 \right]\Big|_{M_i}.
\]
Every factor on the right has positive degree, so
\[
 \operatorname{rk}(A_{1,i})-\operatorname{rk}(A_{0,i})
 =[\ch(A_{1,i})-\ch(A_{0,i})]_{[0]}=0.
\]

The curvature argument in the proof of
\cite[Lemma~5, equation~(4)]{BaerHankeKCowaist} gives
$\|R^{\Psi_{a_\ell}^\pm(B)}\|_g\leq c_{a_\ell}\|R^B\|_g$,
with $c_{a_\ell}$ depending only on $a_\ell$.  Since
$a_\ell\leq k+1$, $p\leq k$, and
\[
 R^{E\oplus F}=R^E\oplus R^F,\qquad
 R^{E\otimes F}=R^E\otimes1+1\otimes R^F,
\]
there is a constant $b_n>0$, depending only on $n$, such that
\[
 \max_a\|R^{A_{a,i}}\|_g
 \leq b_n\max_a\|R^{\widetilde B_{a,i}|_{M_i}}\|_g
 \leq\frac{b_n}{i}.
\]

On $M_i\setminus C_i\subset X^\circ\setminus X_i^\circ$,
applying the same exterior-power, tensor-product, and direct-sum
operations to $\widetilde V_i|_{M_i\setminus C_i}$ gives parallel
unitary maps.  In particular, define
\[
\begin{aligned}
 W_1:C_{0,1}|_{M_i\setminus C_i}
   &\longrightarrow C_{1,1}|_{M_i\setminus C_i},\\
 W_1(u,v)&=
 \bigl(\Psi_{a_1}^+(\widetilde V_i)v,\,
       \Psi_{a_1}^-(\widetilde V_i^{-1})u\bigr).
\end{aligned}
\]
For every $\epsilon=(\epsilon_1,\ldots,\epsilon_p)$ with an odd number of zero entries, we denote
$\epsilon'=(1-\epsilon_1,\epsilon_2,\ldots,\epsilon_p)$.
The number of zero entries in $\epsilon'$ differs by one and is
therefore even.  Over $M_i\setminus C_i$, define
\[
\begin{aligned}
 V_i^A\big|_{\bigotimes_{\ell=1}^pC_{\epsilon_\ell,\ell}}
 &: \bigotimes_{\ell=1}^pC_{\epsilon_\ell,\ell}
 \longrightarrow
 \bigotimes_{\ell=1}^pC_{\epsilon'_\ell,\ell},\\
 V_i^A(u_1\otimes\cdots\otimes u_p)
 &=\begin{cases}
  W_1u_1\otimes u_2\otimes\cdots\otimes u_p,
       &\epsilon_1=0,\\
  W_1^{-1}u_1\otimes u_2\otimes\cdots\otimes u_p,
       &\epsilon_1=1.
 \end{cases}
\end{aligned}
\]
Changing the first entry again recovers $\epsilon$, so every even
summand occurs exactly once as a target.  Each displayed map is
parallel and unitary. Their direct sum is the parallel unitary bundle
isomorphism
\[
 V_i^A:A_{0,i}|_{M_i\setminus C_i}
       \longrightarrow A_{1,i}|_{M_i\setminus C_i}.
\]
Both connections are flat on this set, which proves \ref{K:exterior}.
Since $\Ahat(TM_i)=\Ahat(TX^\circ)|_{M_i}$,
\[
 \int_{M_i}\Ahat(TM_i)
 \bigl(\ch(A_{1,i})-\ch(A_{0,i})\bigr)
 =\int_{M_i}\Theta_i\ne0.
\]
This proves \ref{K:pairing} and completes the proof.
\end{proof}

For each sufficiently large $i$, let $q_i:Y_i\to M_i$ be the
universal cover and $\Gamma_i=\Deck(Y_i/M_i)$, and set
$E_{a,i}=q_i^*A_{a,i}$.  Since $M_i$ is compact, $\Gamma_i$ is
countable and $q_i$ is a principal $\Gamma_i$-bundle.

\begin{lemma}\label{lem:K-lifted}
The preceding objects have the following properties:
\begin{enumerate}[label=\textup{(K\arabic*)},ref=\textup{(K\arabic*)},
                  start=5,leftmargin=3.4em]
\item\label{K:spin}
$Y_i$ is spin.

\item
Every $\gamma\in\Gamma_i$ admits a unitary spinor map
\[
 \widehat T_{\gamma,i}:S_{Y_i}\longrightarrow S_{Y_i},
 \qquad
 \operatorname{pr}_{S_{Y_i}}\widehat T_{\gamma,i}
 =\gamma\operatorname{pr}_{S_{Y_i}},
\]
preserving chirality, Clifford multiplication, and the spin connection.

\item\label{K:T-maps}
Using the same $\widehat T_{\gamma,i}$ for $a=1,0$, the maps
$T^a_{\gamma,i}=\widehat T_{\gamma,i}\otimes1$
on $S_{Y_i}\otimes E_{a,i}$, where $1$ denotes the canonical map
$(y,v)\mapsto(\gamma y,v)$ on $q_i^*A_{a,i}$, satisfy
Setup~\ref{setup:data}\ref{setup:lifts}.

\item\label{K:exterior-maps}
With
\[
 Z_1=Z_0=Y_i,\qquad
 \Phi=\operatorname{id}_{Y_i},\qquad
 \widehat\Phi=1,\qquad
 U^E=q_i^*V_i^A,
\]
Setup~\ref{setup:data}\ref{setup:exterior}--\ref{setup:compatibility}
holds over $q_i^{-1}(M_i\setminus C_i)$.

\item\label{K:small-contraction}
For all sufficiently large $i$,
\[
 \|\cR^{E_{a,i}}\|<\frac{\sigma}{8},
 \qquad a=1,0.
\]
\end{enumerate}
\end{lemma}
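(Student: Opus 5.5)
We prove the five items in order; all but the last are formal consequences of the covering structure. For \ref{K:spin}, $M_i\subset X^\circ\subset X$ is a compact domain, and its universal cover $Y_i$ maps to the universal cover $\widetilde X$ of $X$. Concretely, the inclusion $M_i\hookrightarrow X$ lifts to a local diffeomorphism $Y_i\to\widetilde X$, since $Y_i$ is simply connected. Then $TY_i$ is the pullback of $T\widetilde X$ along this local diffeomorphism, so $w_2(Y_i)$ is the pullback of $w_2(\widetilde X)=0$, and $Y_i$ is spin. Because $Y_i$ is simply connected, $H^1(Y_i;\mathbb Z/2)=0$, so its spin structure is unique up to isomorphism.

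For the spinor lifts, each deck transformation $\gamma$ is an orientation-preserving isometry of $Y_i$, since the metric and orientation are pulled back. Hence $\gamma^*$ of the spin structure is again a spin structure, and by uniqueness it is isomorphic to the original one. This gives a lift of $d\gamma$ to the principal $\mathrm{Spin}(n)$-bundle, which is determined up to sign because $Y_i$ is connected. The associated map $\widehat T_{\gamma,i}$ on $S_{Y_i}$ is unitary, covers $\gamma$, and intertwines Clifford multiplication as $c(\gamma_*v)$. It commutes with the volume element and hence with chirality, and it preserves the spin connection, because that connection is induced from the Levi-Civita connection, which $\gamma$ preserves. We do not need the sign ambiguity resolved into a homomorphism, since Setup~\ref{setup:data} asks only for individual lifts $T^a_\gamma$.

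For \ref{K:T-maps}, the canonical map $1$ on $q_i^*A_{a,i}$ covering $\gamma$ is unitary and preserves the pulled-back connection, because $q_i\circ\gamma=q_i$. Therefore $T^a_{\gamma,i}=\widehat T_{\gamma,i}\otimes1$ satisfies the three identities in \eqref{eq:T-properties}. For \ref{K:exterior-maps}, $\Phi=\mathrm{id}$ is trivially an equivariant isometry with $\pi_1\circ\Phi=\pi_0$, and $\widehat\Phi=1$ has all the required properties. $U^E=q_i^*V_i^A$ is unitary and parallel by \ref{K:exterior}. The compatibility condition \eqref{eq:exterior-compatibility} reads $(\widehat T_{\gamma,i}\otimes1)(1\otimes q_i^*V_i^A)=(1\otimes q_i^*V_i^A)(\widehat T_{\gamma,i}\otimes1)$. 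It holds because a pulled-back bundle map commutes with the canonical deck action: at $y$, both sides send $s\otimes v$ to $\widehat T_{\gamma,i}s\otimes V_i^A(q_i(y))v$. Crucially, the same $\widehat T_{\gamma,i}$ is used for $a=1$ and $a=0$, which is why the sign ambiguity is harmless.

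The one quantitative step is \ref{K:small-contraction}. From \eqref{eq:twisting-curvature-endomorphism} and the fact that $c(e_i)c(e_j)$ is unitary,
\[
 \|\cR^{E_{a,i}}\|\leq\binom n2\,\|R^{E_{a,i}}\|_g.
\]
Moreover $\|R^{E_{a,i}}\|_g=\|R^{A_{a,i}}\|_g\leq b_n/i$, since $q_i$ is a local isometry and the connection is pulled back. Hence $\|\cR^{E_{a,i}}\|\leq\binom n2 b_n/i$, which is less than $\sigma/8$ once $i>8\binom n2 b_n/\sigma$. We expect no real obstacle here. The only point needing care is the existence and uniqueness of the spin structure on $Y_i$, which may be noncompact and has boundary. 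It is handled by the local-diffeomorphism pullback from $\widetilde X$ together with simple connectivity.
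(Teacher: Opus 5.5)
Your proposal is correct and follows essentially the same route as the paper: lift $Y_i\to M_i\hookrightarrow X$ to $\widetilde X$ to make $Y_i$ spin, use $H^1(Y_i;\mathbb Z_2)=0$ to get spin lifts of the deck transformations, use the same lift for both twists so the compatibility with $q_i^*V_i^A$ holds, and bound $\|\cR^{E_{a,i}}\|$ by $\binom n2 b_n/i$. The only cosmetic difference is that you phrase spinness through pulling back $w_2$ rather than pulling back the spin structure itself, which is equivalent.
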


\begin{proof}
The map $Y_i\xrightarrow{q_i}M_i\hookrightarrow X$ lifts, because
$Y_i$ is simply connected, to the universal cover $\widetilde X$ of
$X$.  Pulling back its spin structure gives a spin structure on
$Y_i$.  By \cite[Chapter~II, Theorem~1.7]{LawsonMichelsohn}, this
structure is unique up to isomorphism because
$H^1(Y_i;\mathbb Z_2)=0$.  For $\gamma\in\Gamma_i$, the
orientation-preserving deck isometry $\gamma:Y_i\to Y_i$ pulls back
the spin structure to an isomorphic one.  Thus $\gamma$ admits a
spin lift inducing a unitary spinor map $\widehat T_{\gamma,i}$
preserving chirality, Clifford multiplication, and the spin connection.

The canonical map on $q_i^*A_{a,i}$ preserves the pulled-back
connection because $q_i\circ\gamma=q_i$.
Tensoring it with $\widehat T_{\gamma,i}$ proves
\ref{K:T-maps}.  The map $q_i^*V_i^A$ is unitary and parallel, and,
because the same spin lift is used for both twists,
\[
 T^1_{\gamma,i}(1\otimes q_i^*V_i^A)
 =(1\otimes q_i^*V_i^A)T^0_{\gamma,i}.
\]
This proves \ref{K:exterior-maps}.

Since $q_i$ is a local isometry,
$\|R^{E_{a,i}}\|=\|R^{A_{a,i}}\|_g$.  For a local orthonormal frame
$e_1,\ldots,e_n$, \eqref{eq:twisting-curvature-endomorphism} gives
\[
\begin{aligned}
 \|\cR^{E_{a,i}}\|
 \leq\sum_{r<s}
   \|R^{E_{a,i}}(e_r,e_s)\|
 \leq\binom n2\frac{b_n}{i}<\frac{\sigma}{8}
\end{aligned}
\]
for all sufficiently large $i$.  This proves
\ref{K:small-contraction}.
\end{proof}

\begin{proof}[Proof of Theorem~\ref{thm:main} under
alternative~\ref{alt:K}]
Choose $i$ sufficiently large that
\[
 \dist(C_i,\partial M_i)>\frac{2\pi}{\sqrt\sigma},
 \qquad
 \binom n2\frac{b_n}{i}<\frac{\sigma}{8}.
\]
In Setup~\ref{setup:data}, take
$(M,C,\Gamma,\pi_1,\pi_0)=(M_i,C_i,\Gamma_i,q_i,q_i)$
with the bundles and maps of Lemma~\ref{lem:K-lifted}.
The scalar-curvature bound is inherited from $g$, and
$\ch(E_{a,i})=q_i^*\ch(A_{a,i})$.  Theorem~\ref{thm:vanishing}
therefore gives
\[
 \int_{M_i}\Ahat(TM_i)
 \bigl(\ch(A_{1,i})-\ch(A_{0,i})\bigr)=0,
\]
contrary to Lemma~\ref{lem:K-compact}\ref{K:pairing}.
\end{proof}


\vspace{0.5cm}

\section{Proof under the relative cohomology assumption}\label{sec:H}

Assume alternative~\ref{alt:H} in Theorem~\ref{thm:main} and, for a contradiction, fix a
complete metric $g$ on $X$ with $\Sc_g\geq\sigma>0$.  The relative classes give compactly supported
closed two-forms on suitable domains. After passing to the
universal covers, we use them to construct the twisting data required by Theorem~\ref{thm:vanishing}.

\begin{lemma}\label{lem:H-forms}
For every sufficiently large $i$ there are a connected compact smooth
domain $M_i\subset X^\circ$ with nonempty boundary, a compact smooth
domain $C_i\Subset M_i^\circ$, the universal cover
$q_i:Y_i\to M_i$, and forms
$\omega_i\in\Omega_c^2(M_i)$ and $\alpha_i\in\Omega^1(Y_i)$ with the
following properties:
\begin{enumerate}[label=\textup{(H\arabic*)},ref=\textup{(H\arabic*)},
                  leftmargin=3.4em]
\item\label{H:domains}
$\dist(C_i,\partial M_i)\to\infty$.

\item\label{H:closed-support}
$d\omega_i=0$ and $\supp\omega_i\Subset C_i$.

\item\label{H:pairing}
$\displaystyle\int_{M_i}\omega_i^k\ne0$.

\item\label{H:primitive}
$d\alpha_i=q_i^*\omega_i$.

\item\label{H:exterior-zero}
$\alpha_i=0$ on $q_i^{-1}(M_i\setminus C_i)$.

\item\label{H:lifts}
The map $Y_i\to X^\circ$ lifts to $\widetilde X^\circ$ under
$p^\circ$ and to the spin universal cover of $X$.  In particular,
$Y_i$ is spin.
\end{enumerate}
\end{lemma}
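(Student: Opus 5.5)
We will build the forms directly from the relative classes $h_i$ by relative de Rham theory, and then obtain the domains from Lemma~\ref{lem:comparison-domains}, in the same way as in the proof of Lemma~\ref{lem:K-compact}.

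\emph{Step 1: representing forms.} Relative de Rham cohomology identifies $H^2(X^\circ,X^\circ\setminus X_i^\circ;\mathbb R)$ with the cohomology of closed forms vanishing near $X^\circ\setminus X_i^\circ$, up to derivatives of forms with the same vanishing property. We may work with an open neighborhood of the compact set $X_i^\circ$ and use excision. Choose a closed two-form $\widetilde\omega_i$ on $X^\circ$ with compact support in a small neighborhood $W_i$ of $X_i^\circ$ that represents $h_i$. Since $h_i^k\ne0$ in top relative cohomology, which is compactly supported cohomology near $X_i^\circ$ paired by integration, we have $\int_{X^\circ}\widetilde\omega_i^k\ne0$; at worst we pass to a component.

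For the pullback, the vanishing hypothesis on $\widetilde X^\circ$ gives a one-form $\widetilde\alpha_i$ on $\widetilde X^\circ$ with $d\widetilde\alpha_i=(p^\circ)^*\widetilde\omega_i$ and $\widetilde\alpha_i=0$ on $\widetilde X^\circ\setminus(p^\circ)^{-1}(\overline W_i)$. Obtaining this primitive with the vanishing property requires the relative Poincar\'e lemma together with a collar argument: the relative primitive is a form that is exact near the complement, and it is corrected by $d$ of a cutoff function times a local primitive. This correction is the step I expect to be delicate. The difficulty is that the complement region upstairs may have infinitely many components, and we need a uniform choice. I would follow the standard shrinking trick: pass from $X_i^\circ$ to a slightly larger compact neighborhood, where the primitive is genuinely zero.

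\emph{Step 2: domains.} Apply Lemma~\ref{lem:comparison-domains}\textup{(ii)} with $K_i=\overline W_i$ and $\Theta_i=\widetilde\omega_i^k$. For large $i$ this gives connected compact domains $M_i\subset X^\circ$ and $C_i\Subset M_i^\circ$ with $\dist(C_i,\partial M_i)\to\infty$, $K_i\cap M_i\subset C_i$, and $\int_{M_i}\Theta_i\ne0$. Set $\omega_i=\widetilde\omega_i|_{M_i}$. Then \ref{H:domains}, \ref{H:closed-support}, and \ref{H:pairing} hold, and $\supp\omega_i\Subset C_i$ after slightly enlarging $C_i$ if necessary.

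\emph{Step 3: lifts and primitive.} The inclusion $M_i\hookrightarrow X^\circ$ lifts along the universal cover $Y_i$, because $Y_i$ is simply connected. This gives $j_i:Y_i\to\widetilde X^\circ$ with $p^\circ\circ j_i=q_i$. Similarly, the composite $Y_i\to X$ lifts to the universal cover of $X$, whose spin structure pulls back to $Y_i$, as in Lemma~\ref{lem:K-lifted}. This proves \ref{H:lifts}.

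Set $\alpha_i=j_i^*\widetilde\alpha_i$. Then $d\alpha_i=j_i^*(p^\circ)^*\widetilde\omega_i=q_i^*\omega_i$, which is \ref{H:primitive}. Since $j_i$ maps $q_i^{-1}(M_i\setminus C_i)$ into $(p^\circ)^{-1}(X^\circ\setminus K_i)$, where $\widetilde\alpha_i$ vanishes, we get \ref{H:exterior-zero}.

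The main obstacle is therefore Step 1: producing $\widetilde\alpha_i$ that vanishes identically, not merely exactly, on the noncompact region $(p^\circ)^{-1}(X^\circ\setminus K_i)$. This is where the relative, rather than absolute, vanishing hypothesis upstairs is essential.
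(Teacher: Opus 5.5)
Your outline has the same architecture as the paper: a cut-off relative de Rham representative near $X_i^\circ$, a primitive upstairs corrected by a cutoff, comparison domains, and lifts of $Y_i$ to $\widetilde X^\circ$ and $\widetilde X$. Step~3 is fine. Step~1, however, has two genuine gaps.

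\emph{The primitive.} You leave the construction of $\widetilde\alpha_i$ unproved and gesture at local primitives and a shrinking trick. This step is the heart of the lemma, and the missing idea is that the relative complex already supplies a global primitive. Take a representative $(\xi_i^0,\mu_i^0)$ of $h_i$ in the complex $d(\xi,\mu)=(d\xi,\xi|-d\mu)$. Choose a cutoff $\chi_i$ with $\chi_i=0$ near $X_i^\circ$ and $\supp(1-\chi_i)$ compact inside the chosen neighborhood. Replace the representative by $(\xi_i,(1-\chi_i)\mu_i^0)$, where $\xi_i=\xi_i^0-d(\chi_i\mu_i^0)$. The hypothesis $(p^\circ)^*h_i=0$ then gives a one-form $\beta_i$ and a single function $\nu_i$ on all of $\widetilde X^\circ\setminus(p^\circ)^{-1}(X_i^\circ)$ with $d\beta_i=(p^\circ)^*\xi_i$ and $\beta_i-d\nu_i=(p^\circ)^*((1-\chi_i)\mu_i^0)$. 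Hence $\beta_i=d\nu_i$ wherever $\chi_i=1$, and $\widetilde\alpha_i=\beta_i-d\bigl(((p^\circ)^*\chi_i)\nu_i\bigr)$ is the required primitive. No local choices or uniformity over infinitely many components arise. Also, ``a closed compactly supported two-form representing $h_i$'' does not determine a relative class. Your claimed vanishing of $\widetilde\alpha_i$ outside $(p^\circ)^{-1}(\overline W_i)$ requires the second component of the representative to vanish there, and you never arrange this.

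\emph{The nonvanishing.} The inference from $h_i^k\ne0$ to $\int_{X^\circ}\widetilde\omega_i^k\ne0$ is false. The map $H^n(X^\circ,X^\circ\setminus X_i^\circ)\to H^n_c(X^\circ)$ is not injective when $X_i^\circ$ has several pieces; for example, a class with local degrees $+1$ and $-1$ on two disjoint discs in one component has total integral zero. The same cancellation can occur in $H^n_c$ of a badly chosen neighborhood, or in the large connected domain built inside Lemma~\ref{lem:comparison-domains}\textup{(ii)}. So ``passing to a component'' only works for a neighborhood on which the class is known to survive. Your opening identification of the relative group with cohomology of forms vanishing near $X^\circ\setminus X_i^\circ$ is also wrong: that complex computes $H^*_c$ of the interior of $X_i^\circ$, which is zero if the interior is empty, while $H^n(X^\circ,X^\circ\setminus X_i^\circ)\ne0$.

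The paper fixes this as follows. It takes a relative cycle $z_i$ with $\langle h_i^k,z_i\rangle\ne0$ and chooses the neighborhood $N_i$ disjoint from $|\partial z_i|$. Then $1-\chi_i=0$ near $|\partial z_i|$ kills the boundary term, and Stokes gives $[\xi_i^k]\ne0$ in $H^n_c(N_i^\circ)$. Hence some component $W_i$ satisfies $\int_{W_i}\xi_i^k\ne0$. The paper then applies Lemma~\ref{lem:comparison-domains}\textup{(i)} to the connected set $W_i$. It defines $\omega_i$ and $\alpha_i$ as zero extensions of $\xi_i|_{W_i}$ and $\ell_i^*\widetilde\alpha_i|_{q_i^{-1}(W_i)}$. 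This needs $\widetilde\alpha_i$ to vanish outside the lift of a compact subset of $W_i^\circ$, which $\supp(1-\chi_i)\Subset N_i^\circ$ provides. Vanishing outside the lift of $\overline W_i$ is not enough: other pieces of the neighborhood lying in $M_i\setminus C_i$ would otherwise violate (H4) and (H5).
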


\begin{proof}
We recall the relative de Rham complex of
\cite[\S6, pp.~78--79]{BottTu}, with differential
\[
\begin{gathered}
 \Omega^r(X^\circ,X^\circ\setminus X_i^\circ)
 =\Omega^r(X^\circ)\oplus
 \Omega^{r-1}(X^\circ\setminus X_i^\circ),\\
 d(\xi,\mu)=
 (d\xi,\xi|_{X^\circ\setminus X_i^\circ}-d\mu).
\end{gathered}
\]
Choose a representative $(\xi_i^0,\mu_i^0)$ of $h_i$.  Thus
\[
 d\xi_i^0=0,
 \qquad \xi_i^0=d\mu_i^0
 \quad\text{on }X^\circ\setminus X_i^\circ.
\]
The absolute de Rham map preserves products by
\cite[Theorems~14.28, 15.8 and the discussion following (15.9)]{BottTu}.
Multiplication by a closed form $\eta$ on $X^\circ$ commutes with
the displayed differential and is given by $(\xi,\mu)\eta = \bigl(\xi\wedge\eta,\,\mu\wedge\eta|_{X^\circ\setminus X_i^\circ}\bigr)$. In this complex, $h_i^k$ is the product of $h_i$ with the absolute
class $[\xi_i^0]^{k-1}$ and is represented by
\begin{equation}\label{eq:H-relative-power}
 \bigl((\xi_i^0)^k,
       \mu_i^0\wedge(\xi_i^0)^{k-1}\bigr).
\end{equation}
By the relative de Rham theorem, there is a finite smooth relative
$2k$-cycle $z_i$, with $|\partial z_i|\subset
X^\circ\setminus X_i^\circ$, such that
\[
 \langle h_i^k,z_i\rangle\ne0.
\]

We choose a compact smooth neighborhood $N_i$ of $X_i^\circ$ in
$X^\circ\setminus|\partial z_i|$, contained in the unit neighborhood
of $X_i^\circ$.  The $N_i$ escape every compact subset of $X$.
We then choose $\chi_i\in C^\infty(X^\circ;[0,1])$ with
$\chi_i=0$ near $X_i^\circ$ and
$\supp(1-\chi_i)\Subset N_i^\circ$.  Extend $\chi_i\mu_i^0$ by zero
across $X_i^\circ$ and define
\[
 \xi_i=\xi_i^0-d(\chi_i\mu_i^0).
\]
Then $d\xi_i=0$, $\supp\xi_i\subset\supp(1-\chi_i)$, and
\[
 (\xi_i,(1-\chi_i)\mu_i^0)
 =(\xi_i^0,\mu_i^0)-d(\chi_i\mu_i^0,0)
\]
is still a representative of $h_i$.  Since $\chi_i=1$ near $|\partial z_i|$, we have
\begin{equation}\label{eq:H-cycle-pairing}
 \int_{z_i}\xi_i^k= \int_{z_i}\xi_i^k - \int_{\partial z_i}(1-\chi_i)\mu_i^0\wedge\xi_i^{k-1}= \langle h_i^k,z_i\rangle\ne0.
\end{equation}
Since $|\partial z_i|\cap N_i=\varnothing$, we also have $[\xi_i^k]\ne0$ in $H_c^n(N_i^\circ)$, otherwise a compactly supported
primitive of $\xi_i^k$ in $N_i^\circ$ would, after extension by zero,
give $\int_{z_i}\xi_i^k=0$ by Stokes' theorem. 
By \cite[Corollary~5.8]{BottTu}, for every connected component $W$
of $N_i$, the map
\[
 H_c^n(W^\circ)\longrightarrow\mathbb R,
 \qquad [\eta]\longmapsto\int_{W^\circ}\eta
\]
is an isomorphism.  Hence some component $W_i$ of $N_i$ satisfies
\begin{equation}\label{eq:H-local-form}
 \int_{W_i}\xi_i^k\ne0.
\end{equation}

Since $(p^\circ)^*h_i=0$, the relative differential defined above
applied to a primitive of
$(p^\circ)^*(\xi_i,(1-\chi_i)\mu_i^0)$ gives
\[
 \beta_i\in\Omega^1(\widetilde X^\circ),\qquad
 \nu_i\in C^\infty\!\left(
 \widetilde X^\circ\setminus(p^\circ)^{-1}(X_i^\circ)\right)
\]
with
\[
 d\beta_i=(p^\circ)^*\xi_i,
 \qquad
 \beta_i-d\nu_i=(p^\circ)^*((1-\chi_i)\mu_i^0)
\]
where the second equality holds on
$\widetilde X^\circ\setminus(p^\circ)^{-1}(X_i^\circ)$.
The function $((p^\circ)^*\chi_i)\nu_i$ extends smoothly by zero
across $(p^\circ)^{-1}(X_i^\circ)$, since $\chi_i=0$ near
$X_i^\circ$. The function $\nu_i$ satisfies
$\beta_i=d\nu_i$ outside $(p^\circ)^{-1}\supp(1-\chi_i)$. The
following exact correction therefore makes the primitive vanish
there without changing its differential.   Define
\[
 \widetilde\alpha_i
 =\beta_i-d\bigl(((p^\circ)^*\chi_i)\nu_i\bigr).
\]
Since $\chi_i=1$ on $X^\circ\setminus\supp(1-\chi_i)$,
\begin{equation}\label{eq:H-beta}
 \begin{gathered}
 d\widetilde\alpha_i=d\beta_i=(p^\circ)^*\xi_i,\\
 \widetilde\alpha_i=\beta_i-d\nu_i=0
 \quad\text{on }
 \widetilde X^\circ\setminus(p^\circ)^{-1}\supp(1-\chi_i).
 \end{gathered}
\end{equation}

We apply Lemma~\ref{lem:comparison-domains}\textup{(i)} to $W_i$.  It
gives a connected compact smooth domain $M_i\subset X^\circ$ with
nonempty boundary, $W_i\Subset M_i^\circ$, and
\[
 \dist(W_i,\partial M_i)\longrightarrow\infty.
\]
Let $q_i:Y_i\to M_i$ and $p:\widetilde X\to X$ be the universal
covers.  Since $Y_i$ is simply connected, the compositions of $q_i$
with the inclusions $M_i\hookrightarrow X^\circ$ and
$M_i\hookrightarrow X$ admit lifts
\[
 \begin{aligned}
 \ell_i:Y_i&\longrightarrow\widetilde X^\circ,
 & p^\circ(\ell_i(y))&=q_i(y),\\
 \ell_i^{\mathrm{spin}}:Y_i&\longrightarrow\widetilde X,
 & p(\ell_i^{\mathrm{spin}}(y))&=q_i(y).
 \end{aligned}
\]
The manifold $\widetilde X$ is spin by hypothesis.
We choose a compact smooth domain $C_i\Subset W_i^\circ$ whose interior
contains $W_i\cap\supp(1-\chi_i)$.
Define $\omega_i$ and $\alpha_i$ as the zero extensions to $M_i$
and $Y_i$ of $\xi_i|_{W_i}$ and
$\ell_i^*\widetilde\alpha_i|_{q_i^{-1}(W_i)}$, respectively.
Their support properties make these extensions smooth, and
\[
\begin{gathered}
 d\omega_i=0,
 \qquad \supp\omega_i\Subset C_i,
 \qquad \int_{M_i}\omega_i^k=\int_{W_i}\xi_i^k\ne0,\\
 d\alpha_i=q_i^*\omega_i,
 \quad \alpha_i=0\ \text{on }q_i^{-1}(M_i\setminus C_i).
\end{gathered}
\]
Also
$\dist(C_i,\partial M_i)\geq\dist(W_i,\partial M_i)\to\infty$.
The pullback of the spin structure along $\ell_i^{\mathrm{spin}}$
makes $Y_i$ spin.
\end{proof}

We next use the one-form $\alpha_i$ to equip the trivial Hermitian line bundle
$L_{s,i}=Y_i\times\mathbb C$ with the unitary connection
\[
 \nabla^{s,i}=d-2\pi is\alpha_i,
 \qquad s\in\mathbb R.
\]

\begin{lemma}\label{lem:H-twists}
Let $\Gamma_i=\Deck(Y_i/M_i)$ and fix $y_i\in Y_i$.  Then:
\begin{enumerate}[label=\textup{(H\arabic*)},ref=\textup{(H\arabic*)},
                  start=7,leftmargin=3.4em]
\item\label{H:curvature}
\[
 R^{L_{s,i}}=-2\pi is\,q_i^*\omega_i,
 \qquad
 \|\cR^{L_{s,i}}\|
 \leq 2\pi\sqrt{\frac{n(n-1)}2}\,
       |s|\,\|\omega_i\|_\infty.
\]
Here $\|\omega_i\|_\infty=\sup_{x\in M_i}|\omega_i(x)|_g$,
where $|\omega_i|_g^2=\sum_{a<b}\omega_i(e_a,e_b)^2$ in a
$g$-orthonormal frame.

\item\label{H:chern}
\[
 \ch(L_{s,i},\nabla^{s,i})=q_i^*e^{s\omega_i}.
\]

\item\label{H:potentials}
For every $\gamma\in\Gamma_i$ there is a unique smooth function
$f_{\gamma,i}$ such that
\begin{equation}\label{eq:H-f}
 df_{\gamma,i}=\gamma^*\alpha_i-\alpha_i,
 \qquad f_{\gamma,i}(y_i)=0.
\end{equation}

\item\label{H:maps}
There are unitary spinor bundle maps
\[
 \widehat T_{\gamma,i}:S_{Y_i}\longrightarrow S_{Y_i},
 \qquad
 \operatorname{pr}_{S_{Y_i}}\widehat T_{\gamma,i}
 =\gamma\operatorname{pr}_{S_{Y_i}},
\]
chosen with $\widehat T_{1,i}=1$, for which the bundle maps defined
for $u\otimes z\in(S_{Y_i}\otimes L_{s,i})_y$ by
\[
\begin{aligned}
 T^{s,i}_\gamma:S_{Y_i}\otimes L_{s,i}
 &\longrightarrow S_{Y_i}\otimes L_{s,i},\\
 T^{s,i}_\gamma(u\otimes z)
 &=\widehat T_{\gamma,i}u\otimes
   e^{2\pi isf_{\gamma,i}(y)}z
\end{aligned}
\]
satisfy Setup~\ref{setup:data}\ref{setup:lifts}.

\item\label{H:phase}
On every connected component $N$ of
$q_i^{-1}(M_i\setminus C_i)$, $f_{\gamma,i}$ is constant and
\begin{equation}\label{eq:H-component-phase}
 T^{s,i}_\gamma
 =e^{2\pi isf_{\gamma,i}|_N}T^{0,i}_\gamma
 \qquad\text{on }N.
\end{equation}
\end{enumerate}
\end{lemma}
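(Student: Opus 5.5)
The five items are largely local computations, so I would verify them in order, using only the properties \ref{H:primitive}--\ref{H:lifts} of Lemma~\ref{lem:H-forms}.

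For \ref{H:curvature}, the curvature of $d-2\pi is\alpha_i$ on the trivial line bundle is $-2\pi is\,d\alpha_i=-2\pi is\,q_i^*\omega_i$ by \ref{H:primitive}. For the norm of $\cR^{L_{s,i}}=\sum_{a<b}c(e_a)c(e_b)R(e_a,e_b)$, each $c(e_a)c(e_b)$ with $a<b$ is unitary. I would therefore bound $\|\cR\|\leq\sum_{a<b}|R(e_a,e_b)|$ and apply Cauchy--Schwarz over the $\binom n2$ pairs. This gives $2\pi|s|\sqrt{n(n-1)/2}\,|\omega_i|_g$, and $q_i$ is a local isometry. Item \ref{H:chern} is immediate from the convention $\ch=\Tr\exp(\sqrt{-1}R/2\pi)$, since $\sqrt{-1}\cdot(-2\pi is\,q_i^*\omega_i)/(2\pi)=s\,q_i^*\omega_i$.

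For \ref{H:potentials}, I would note that $d(\gamma^*\alpha_i-\alpha_i)=\gamma^*q_i^*\omega_i-q_i^*\omega_i=0$ because $q_i\circ\gamma=q_i$. Since $Y_i$ is simply connected, this closed form is exact on the connected space $Y_i$, and the normalization $f_{\gamma,i}(y_i)=0$ fixes the constant.

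For \ref{H:maps}, I would choose the spin lifts $\widehat T_{\gamma,i}$ exactly as in Lemma~\ref{lem:K-lifted}. The spin structure is unique because $H^1(Y_i;\mathbb Z_2)=0$, so each deck isometry has a lift preserving chirality, Clifford multiplication and the spin connection; for $\gamma=1$ I take the identity. Chirality and Clifford compatibility of $T^{s,i}_\gamma$ are then clear, since the line factor is a scalar. The connection identity $\nabla_{\gamma_*v}(T_\gamma u)=T_\gamma\nabla_vu$ reduces on the line factor to checking that multiplication by $e^{2\pi isf_{\gamma,i}}$, combined with the translation $z\mapsto z$ from $y$ to $\gamma y$, intertwines $d-2\pi is\alpha_i$ at $y$ with $d-2\pi is\alpha_i$ at $\gamma y$. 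Pulling back the connection at $\gamma y$ gives $d-2\pi is\gamma^*\alpha_i$. Conjugating $d-2\pi is\alpha_i$ by $e^{2\pi isf}$ gives $d-2\pi is(\alpha_i+df)$, and these agree by \eqref{eq:H-f}. The main care point is the sign and direction conventions: whether the function is evaluated at $y$ or at $\gamma y$, and that $(\gamma^{-1})^*$ does not appear. I would fix these by writing the section-level formula $(T_\gamma u)(\gamma y)=\widehat T u(y)\,e^{2\pi isf(y)}$ and differentiating. Unitarity holds because $f$ is real.

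For \ref{H:phase}, on $q_i^{-1}(M_i\setminus C_i)$ we have $\alpha_i=0$ by \ref{H:exterior-zero}. The set $q_i^{-1}(M_i\setminus C_i)$ is $\gamma$-invariant, so $\gamma^*\alpha_i=0$ there as well. Hence $df_{\gamma,i}=0$ on each component $N$, which is open, so $f_{\gamma,i}$ is constant on $N$. Since $T^{0,i}_\gamma=\widehat T_{\gamma,i}\otimes 1$, the identity \eqref{eq:H-component-phase} follows directly. I expect no genuine obstacle; the only delicate step is the convention check in \ref{H:maps}.
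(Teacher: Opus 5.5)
Your proposal is correct and follows essentially the same route as the paper. The paper's proof consists of the same steps: the curvature and Chern character computations, the closed-form argument giving $f_{\gamma,i}$ on the simply connected $Y_i$, the spin lifts taken from Lemma~\ref{lem:K-lifted} together with the gauge identity $(d-2\pi is\gamma^*\alpha_i)(e^{2\pi isf_{\gamma,i}}v)=e^{2\pi isf_{\gamma,i}}(d-2\pi is\alpha_i)v$, and the vanishing of $df_{\gamma,i}$ on the $\gamma$-invariant set $q_i^{-1}(M_i\setminus C_i)$. Your convention check in the connection step comes out the way you describe, since $\gamma^*\alpha_i=\alpha_i+df_{\gamma,i}$.
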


\begin{proof}
Lemma~\ref{lem:H-forms}\ref{H:primitive} gives
\[
 R^{L_{s,i}}=d(-2\pi is\alpha_i)
 =-2\pi is\,d\alpha_i=-2\pi is\,q_i^*\omega_i.
\]
For a local orthonormal
frame $e_1,\ldots,e_n$, \eqref{eq:twisting-curvature-endomorphism}
and Cauchy--Schwarz give
\[
 \|\cR^{L_{s,i}}\|
 \leq2\pi|s|\sum_{a<b}|\omega_i(e_a,e_b)|
 \leq2\pi\sqrt{\frac{n(n-1)}2}\,|s|\,\|\omega_i\|_\infty.
\]
The normalization in the Introduction gives
\[
 \ch(L_{s,i},\nabla^{s,i})
 =\exp\!\left(\frac{iR^{L_{s,i}}}{2\pi}\right)
 =q_i^*e^{s\omega_i},
\]
which proves \ref{H:chern}.

Since $q_i\circ\gamma=q_i$ and $d\alpha_i=q_i^*\omega_i$,
\[
 d(\gamma^*\alpha_i-\alpha_i)
 =\gamma^*q_i^*\omega_i-q_i^*\omega_i
 =(q_i\circ\gamma)^*\omega_i-q_i^*\omega_i=0.
\]
Thus simple connectedness of $Y_i$ gives a unique solution of
\eqref{eq:H-f} with the stated
normalization.  As in the proof of
Lemma~\ref{lem:K-lifted}, each deck isometry admits a spin lift
inducing a unitary map $\widehat T_{\gamma,i}$
preserving chirality, Clifford multiplication, and the spin connection.
Choose one for each $\gamma$, with $\widehat T_{1,i}=1$.
For a section $v$ of $L_{s,i}$, \eqref{eq:H-f} gives
\[
 (d-2\pi is\,\gamma^*\alpha_i)
       (e^{2\pi isf_{\gamma,i}}v)
 =e^{2\pi isf_{\gamma,i}}
       (d-2\pi is\,\alpha_i)v.
\]
Consequently the maps $T^{s,i}_\gamma$ in \ref{H:maps} are unitary,
preserve the connection, and satisfy \eqref{eq:T-properties}.
Since $q_i\circ\gamma=q_i$, the map $\gamma$ preserves
$q_i^{-1}(M_i\setminus C_i)$.  Lemma~\ref{lem:H-forms}\ref{H:exterior-zero}
and \eqref{eq:H-f} give
\[
 df_{\gamma,i}=\gamma^*\alpha_i-\alpha_i=0
 \quad\text{on }q_i^{-1}(M_i\setminus C_i).
\]
Thus $f_{\gamma,i}$ is constant on each connected component $N$ of
this set, and the formula for $T^{s,i}_\gamma$ gives
\eqref{eq:H-component-phase}.
\end{proof}

Observe that taking $Z_1=Z_0=Y_i$ and $\Phi=\mathrm{id}$ would require $T^{s,i}_\gamma=T^{0,i}_\gamma$
on $q_i^{-1}(M_i\setminus C_i)$. By \eqref{eq:H-component-phase}, these maps instead
differ on each component by a constant phase. We therefore
construct two principal bundles for a common countable group in a way that that the required compatibility holds. The following lemma provides the required data.

\begin{lemma}\label{lem:H-coverings}
Fix $i$ and $s\in\mathbb R$.  There are:
\begin{enumerate}[label=\textup{(H\arabic*)},ref=\textup{(H\arabic*)},
                  start=12,leftmargin=3.4em]
\item\label{H:group}
a countable discrete group $Q_i$;

\item\label{H:covers}
principal $Q_i$-bundles $\pi_{a,i}:Z_{a,i}\to M_i$, $a=1,0$;

\item\label{H:local}
spin structures on $Z_{a,i}$ and Hermitian line bundles
$E_{a,i}\to Z_{a,i}$ with unitary connections such that every
component of $Z_{1,i}$, respectively $Z_{0,i}$, has the metric, spin
structure, and line connection of
$(Y_i,L_{s,i},\nabla^{s,i})$, respectively
$(Y_i,L_{0,i},\nabla^{0,i})$;

\item\label{H:Q-maps}
for every $\theta\in Q_i$, individual smooth unitary bundle maps
\[
 T^a_{\theta,i}:S_{Z_{a,i}}\otimes E_{a,i}
 \longrightarrow S_{Z_{a,i}}\otimes E_{a,i},
 \qquad
 \operatorname{pr}\circ T^a_{\theta,i}=\theta\circ\operatorname{pr},
\]
satisfying \eqref{eq:T-properties}, with $\operatorname{pr}$ denoting
the bundle projection;

\item\label{H:outside-maps}
a $Q_i$-equivariant orientation-preserving isometry
\[
 \Phi_i:\pi_{0,i}^{-1}(M_i\setminus C_i)
 \longrightarrow\pi_{1,i}^{-1}(M_i\setminus C_i),
\]
and, on these inverse images, smooth bundle maps
$\widehat\Phi_i:S_{Z_{0,i}}\to S_{Z_{1,i}}$ and
$U_i^E:E_{0,i}\to E_{1,i}$ satisfying
\[
 \operatorname{pr}_{S_{Z_{1,i}}}\widehat\Phi_i
 =\Phi_i\operatorname{pr}_{S_{Z_{0,i}}},
 \qquad
 \operatorname{pr}_{E_{1,i}}U_i^E
 =\Phi_i\operatorname{pr}_{E_{0,i}},
\]
where $\widehat\Phi_i$ is unitary and preserves chirality, Clifford
multiplication, and the spin connection, and $U_i^E$
is parallel and unitary, such that
\begin{equation}\label{eq:H-Q-equality}
 T^1_{\theta,i}(\widehat\Phi_i\otimes U_i^E)
 =(\widehat\Phi_i\otimes U_i^E)T^0_{\theta,i},
 \qquad \theta\in Q_i;
\end{equation}

\item\label{H:forms-on-base}
\[
 \ch(E_{1,i})=\pi_{1,i}^*e^{s\omega_i},
 \qquad
 \ch(E_{0,i})=\pi_{0,i}^*1.
\]
\end{enumerate}
\end{lemma}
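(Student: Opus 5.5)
The plan is to reduce the problem to a bookkeeping problem for a group action on a tree. By Lemma~\ref{lem:H-twists}\ref{H:phase}, the obstruction to taking $Z_1=Z_0=Y_i$ is a locally constant phase $e^{2\pi isf_{\gamma,i}|_N}$ on each component $N$ of $q_i^{-1}(M_i\setminus C_i)$. So I will enlarge the symmetry group until these phases can be absorbed into the identification $U_i^E$ separately on each exterior component.

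\emph{Step 1: the structure groups.} For $a=1,0$ let $G^a$ be the group of unitary bundle maps of $S_{Y_i}\otimes L_{sa,i}$ that cover deck transformations and preserve chirality, Clifford multiplication and the connection. By Lemma~\ref{lem:H-twists}\ref{H:maps} and irreducibility of the Clifford module, this is a central extension
\[
 1\to U(1)\to G^a\to\Gamma_i\to1 .
\]
I replace $U(1)$ by the countable subgroup generated by all phases $e^{2\pi isf_{\gamma,i}|_N}$, by the spin sign ambiguities, and by the cocycle values of $\gamma\mapsto T^{sa,i}_\gamma$. This gives countable groups $G^a_{\mathrm c}$.

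\emph{Step 2: the pieces and the subgroups.} Decompose $M_i$ into the components of $C_i$ and the components of $M_i\setminus C_i^\circ$, glued along components of $\partial C_i$. Fix a component $N$ of $q_i^{-1}(M_i\setminus C_i)$ with stabilizer $\Gamma_N\subset\Gamma_i$. By Lemma~\ref{lem:H-twists}\ref{H:phase}, $\gamma\mapsto e^{2\pi isf_{\gamma,i}|_N}$ converts the lifts $T^{s,i}_\gamma$ on $N$ into $T^{0,i}_\gamma$. This yields an isomorphism between the preimages of $\Gamma_N$ in $G^1_{\mathrm c}$ and in $G^0_{\mathrm c}$. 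Different choices of $N$ in the same $\Gamma_i$-orbit give conjugate identifications.

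\emph{Step 3: the group $Q_i$ and the covers.} Following Appendix~\ref{app:common-cover}, I take $Q_i$ to be the fundamental group of a graph of groups modelled on this decomposition. Interior vertices carry copies of the relevant subgroups of $G^1_{\mathrm c}$, and exterior vertices carry the $\Gamma_N$-preimages glued via the isomorphisms of Step~2. The Bass--Serre tree then supplies homomorphisms $\rho_a:Q_i\to G^a_{\mathrm c}$, each compatible with the exterior identifications. Set
\[
 Z_{a,i}=Q_i\times_{\ker\text{-adjusted}}Y_i,
\]
that is, $Z_{a,i}=Q_i\times Y_i$ modulo the diagonal action of the appropriate subgroup through $\rho_a$. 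This is a principal $Q_i$-bundle over $M_i$, and each of its components is a copy of $Y_i$ carrying $L_{s,i}$, respectively $L_{0,i}$. This gives \ref{H:group}--\ref{H:local}, with spin structures pulled back as in Lemma~\ref{lem:H-forms}\ref{H:lifts}. The maps $T^a_{\theta,i}$ are induced by left multiplication together with $\rho_a(\theta)$, which gives \ref{H:Q-maps}. Finally, \ref{H:forms-on-base} follows from \ref{H:chern}.

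\emph{Step 4: the exterior identification.} On $\pi_{0,i}^{-1}(M_i\setminus C_i)$, define $\Phi_i$ as the identity on each copy of $Y_i$, define $\widehat\Phi_i=1$, and on the copy of the component $N$ let $U_i^E$ be multiplication by the constant phase attached to $N$. This $U_i^E$ is parallel because $\alpha_i=0$ there (\ref{H:exterior-zero}), so $\nabla^{s,i}=\nabla^{0,i}$. Equation \eqref{eq:H-Q-equality} should then reduce, component by component, to \eqref{eq:H-component-phase}.

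The main obstacle is Step~3. The phases attached to different exterior components must be chosen consistently under the whole group, and a single group $Q_i$ must map to both extensions compatibly. This fails in general for $\Gamma_i$ itself, which is why $\Gamma_i$ is replaced by an amalgam. I would first try the graph of groups with one vertex for each piece of the decomposition and edge groups the stabilizers of the boundary components of $C_i$. I would then check that the normal form theorem makes the induced maps $\rho_a$ well defined and makes the phases constant along each $Q_i$-orbit of exterior components.
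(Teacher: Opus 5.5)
Your Steps 1 and 2 match the paper: the countable central extensions $G_a$ of $\Gamma_i$ by $\Lambda$, and the isomorphisms $\phi_j$ between the preimages of the exterior stabilizers. There is, however, a genuine gap at Steps 3--4, and Step 4 as written fails for every choice of $Q_i$.

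Suppose $\Phi_i$ carries the exterior part of a copy $\widetilde Y$ of $Y_i$ in $Z_{0,i}$ identically onto one copy in $Z_{1,i}$, with $\widehat\Phi_i=1$. Equivariance forces the stabilizer of $\widetilde Y$, which acts on it as $\Gamma_i$, to stabilize the image copy as well, by the same deck transformations. Take $\theta$ in this stabilizer covering $\gamma$. Irreducibility of the Clifford module gives $T^1_{\theta,i}=\lambda T^{s,i}_\gamma$ and $T^0_{\theta,i}=\mu T^{0,i}_\gamma$ on these copies, with constants $\lambda,\mu\in U(1)$. A parallel unitary $U_i^E$ is a constant $c_N$ on each exterior component $N$. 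By \eqref{eq:H-component-phase}, equation \eqref{eq:H-Q-equality} on $N$ then reads $\lambda\mu^{-1}e^{2\pi isf_{\gamma,i}|_N}=c_{\gamma N}/c_N$.

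If some $h\in\Gamma_i$ stabilizes two exterior components $N\neq N'$, this forces $e^{2\pi isf_{h,i}|_N}=e^{2\pi isf_{h,i}|_{N'}}$, which is false in general. Already for nested annuli $C_i\Subset M_i$, as arise for $X=S^1\times\mathbb R$, the generator of $\Gamma_i\cong\mathbb Z$ stabilizes both lifted exterior strips. The two phases there differ by $e^{\pm2\pi is\int_{M_i}\omega_i}\neq1$ for small $s\neq0$. This is exactly the obstruction the lemma must get around, and it lives inside a single copy of $Y_i$. So no enlargement of the group helps as long as $\Phi_i$ keeps each copy intact.

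Step 3 does not repair this either. Bass--Serre theory gives injections of the vertex groups into the fundamental group, not homomorphisms $\rho_a:Q_i\to G^a_{\mathrm c}$ out of it. Moreover, a graph whose interior vertices carry only subgroups of $G^1_{\mathrm c}$ contains no copy of $G^0_{\mathrm c}$ from which $Z_{0,i}$ and $T^0_{\theta,i}$ could be induced.

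The missing idea is to let $\Phi_i$ send the parts of one copy of $Y_i$ lying over different components $N_j$ of $M_i\setminus C_i$ into different copies in $Z_{1,i}$. The paper's graph of groups is not modelled on the geometry of $M_i$. It has two vertices carrying all of $G_0$ and $G_1$, and one edge for each $N_j$, with edge group $p_0^{-1}(H_j)$ included into $G_0$ and mapped into $G_1$ by $\phi_j$. In its fundamental group $\mathcal L$ both vertex groups embed by Serre's theorem, $\Lambda$ becomes central, and one sets $Q_i=\mathcal L/\Lambda$.

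The covers are induced from these embeddings, $Z_{a,i}=(\mathcal L\times Y_i)/G_a$. The stable letters $e_j$ then give the exterior identifications $\Phi_j[\ell,y]_0=[\ell e_j,y]_1$ and $U_j[\ell,w]_0=[\ell e_j,w]_1$. These are well defined precisely because of the edge relation $ge_j=e_j\phi_j(g)$. They commute with the $Q_i$-action because $Q_i$ acts on the left while $e_j$ acts on the right.

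In the annulus example, the two lifted strips of one copy in $Z_{0,i}$ go to two different copies in $Z_{1,i}$, namely those indexed by $\ell$ and $\ell e_2$. Accordingly, $\widehat\Phi_j$ is a spin lift of an isometry between different copies. The map $U_j^E$ is obtained by factoring $U_j=\widehat\Phi_j\otimes U_j^E$, rather than being a phase chosen by hand.
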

The proof of Lemma~\ref{lem:H-coverings} is given in Appendix~\ref{app:common-cover}.

\vspace{0.5cm}

For every sufficiently large $i$, define
\[
 P_i(s)=\int_{M_i}
 [\Ahat(TM_i)(e^{s\omega_i}-1)]_{[2k]}
 =\sum_{j=1}^k\frac{s^j}{j!}
   \int_{M_i}\Ahat_{2k-2j}(TM_i)\wedge\omega_i^j.
\]
This is a polynomial, and its coefficient of $s^k$ is
\[
 \frac1{k!}\int_{M_i}\omega_i^k\ne0
\]
by Lemma~\ref{lem:H-forms}\ref{H:pairing}.  Hence $P_i(s_i)\ne0$ for all arbitrarily small $s_i>0$.

\begin{proof}[Proof of Theorem~\ref{thm:main} under
alternative~\ref{alt:H}]
Choose $i$ so large that
\[
 \dist(C_i,\partial M_i)>\frac{2\pi}{\sqrt\sigma}.
\]
Choose $s_i\ne0$ with $P_i(s_i)\ne0$ and
\[
 2\pi\sqrt{\frac{n(n-1)}2}\,
 |s_i|\,\|\omega_i\|_\infty<\frac\sigma8.
\]
We apply Lemma~\ref{lem:H-coverings} with $s=s_i$.  Take
\[
 (M,C,\Gamma,\pi_1,\pi_0)
 =(M_i,C_i,Q_i,\pi_{1,i},\pi_{0,i})
\]
in Setup~\ref{setup:data}.  Items~\ref{H:local} and \ref{H:Q-maps}
give the spin structures, line connections, and maps required in
\ref{setup:covering}--\ref{setup:lifts}. The maps in
\ref{H:outside-maps} and \eqref{eq:H-Q-equality} give
\ref{setup:exterior}--\ref{setup:compatibility}.  The metric on every
component is pulled back from $(M_i,g)$, and
Lemma~\ref{lem:H-twists}\ref{H:curvature} gives
\[
 \|\cR^{E_{1,i}}\|<\frac\sigma8,
 \qquad \cR^{E_{0,i}}=0.
\]
Theorem~\ref{thm:vanishing} and \ref{H:forms-on-base} now give
\[
 0=\int_{M_i}\Ahat(TM_i)(e^{s_i\omega_i}-1)=P_i(s_i),
\]
which is a contradiction.  Together with the proof under
alternative~\ref{alt:K}, this proves Theorem~\ref{thm:main}.
\end{proof}


\vspace{0.5cm}

\appendix

\section{Comparison domains and smooth functions}
\label{app:localization}

\begin{lemma}\label{lem:comparison-domains}
Let $g$ be complete on $X$, and let compact sets
$K_i\subset X^\circ$ escape every compact subset of $X$.
\begin{enumerate}[label=\textup{(\roman*)}]
\item If every $K_i$ is connected, there are connected compact smooth
domains $M_i\subset X^\circ$ with nonempty boundary such that
\[
 K_i\Subset M_i^\circ,
 \qquad \dist(K_i,\partial M_i)\longrightarrow\infty.
\]

\item If $\Theta_i\in\Omega_c^n(X^\circ)$ satisfies
$\supp\Theta_i\subset K_i$ and $\int_{X^\circ}\Theta_i\ne0$, there
are connected compact smooth domains $M_i\subset X^\circ$ with
nonempty boundary and compact smooth domains
$C_i\Subset M_i^\circ$ such that
\[
 K_i\cap M_i\subset C_i,
 \qquad \int_{M_i}\Theta_i\ne0,
 \qquad \dist(C_i,\partial M_i)\longrightarrow\infty.
\]
\end{enumerate}
\end{lemma}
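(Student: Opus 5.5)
The plan is to build both families of domains as sublevel sets of a smoothed distance function to $K_i$, and then pass to a suitable connected component.

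\emph{Setup.} Since $X\setminus X^\circ$ is compact and the $K_i$ escape every compact subset of $X$, we have
\[
 \rho_i=\dist(K_i,X\setminus X^\circ)\longrightarrow\infty
\]
(with $\rho_i=\infty$ if $X^\circ=X$). In particular $X$ is noncompact.

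\emph{Smoothed distance.} For each $i$, I would produce $f_i\in C^\infty(X;[0,\infty))$ with
\[
 |f_i-\dist(\cdot,K_i)|\leq1,\qquad |df_i|\leq2.
\]
One option is the standard Greene--Wu smoothing of Lipschitz functions. Another is a partition-of-unity convolution in charts; this is the same device behind Lemma~\ref{lem:smooth-plateau}.

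By completeness and Hopf--Rinow, each sublevel set $\{f_i\leq t\}$ lies in the closed $(t+1)$-neighbourhood of the compact set $K_i$, and is therefore compact. By Sard, I would pick regular values
\[
 t_i\in\bigl[\min(\rho_i,i)/3,\ \min(\rho_i,i)/3+1\bigr]\quad\text{and}\quad\tau\in[2,3].
\]
Then $M_i'=\{f_i\leq t_i\}$ is a compact smooth domain. For large $i$ it contains $K_i$ in its interior, because $f_i\leq1<t_i$ on $K_i$. It lies in $X^\circ$, since every point of it is within distance $t_i+1<\rho_i$ of $K_i$. Its boundary is nonempty, since $M_i'$ is a compact proper subset of the connected noncompact manifold $X$. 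Finally, the gradient bound gives, for $x\in\{f_i\leq\tau\}$ and $y\in\partial M_i'$,
\[
 t_i-\tau=f_i(y)-f_i(x)\leq2\,d(x,y),
\]
so $\dist(\{f_i\leq\tau\},\partial M_i')\geq(t_i-3)/2\to\infty$.

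\emph{Part (i).} Let $M_i$ be the connected component of $M_i'$ that contains the connected set $K_i$. It is a connected compact smooth domain with $K_i\Subset M_i^\circ$. Its boundary is nonempty by the same compactness argument. Since $\partial M_i\subset\partial M_i'$ and $\dist(y,K_i)\geq t_i-1$ for $y\in\partial M_i'$, we get $\dist(K_i,\partial M_i)\geq t_i-1\to\infty$.

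\emph{Part (ii).} Here $M_i'$ has finitely many components, and $\int_{M_i'}\Theta_i=\int_{X^\circ}\Theta_i\neq0$ because $\supp\Theta_i\subset K_i\subset M_i'$. So some component $M_i$ has $\int_{M_i}\Theta_i\neq0$. Set $C_i=M_i\cap\{f_i\leq\tau\}$. This is a compact smooth domain, being a union of components of $\{f_i\leq\tau\}$, and it lies in $M_i^\circ$. It contains $K_i\cap M_i$, since $f_i\leq1<\tau$ on $K_i$. By the estimate above, $\dist(C_i,\partial M_i)\geq(t_i-3)/2\to\infty$.

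The only step that is not bookkeeping is the smoothing of $\dist(\cdot,K_i)$ with a uniform gradient bound and uniform $C^0$ closeness on a complete, possibly unbounded-geometry manifold. I would handle it by the standard partition-of-unity convolution argument, which only needs a local Lipschitz bound; the constant $2$ could be replaced by $1+\eta$ if needed.
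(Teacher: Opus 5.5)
Your proof is correct and follows essentially the same route as the paper. In both, $M_i$ is a component of a sublevel set of a smoothing of $\dist(\cdot,K_i)$ at a regular value, compact by Hopf--Rinow, with nonempty boundary because $X$ is noncompact. The paper cites the approximation theorem of Azagra et al.\ for the smoothing and takes $C_i$ to be any compact smooth neighbourhood of $K_i\cap M_i$ inside its unit neighbourhood, whereas you take $C_i$ to be a second sublevel set.

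The gradient bound $|df_i|\leq2$ is not actually needed. The $C^0$ bound and the triangle inequality already give $\dist(\{f_i\leq\tau\},\partial M_i')\geq t_i-\tau-2$. Also, your displayed equality $t_i-\tau=f_i(y)-f_i(x)$ should be the inequality $t_i-\tau\leq f_i(y)-f_i(x)$.
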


\begin{proof}
Set
\[
 r_i=\min\!\left\{i,
       \frac18\dist(K_i,X\setminus X^\circ)\right\},
\]
with $r_i=i$ when $X^\circ=X$.  Then $r_i\to\infty$: otherwise some
$K_i$ would meet a fixed closed neighborhood of the compact set
$X\setminus X^\circ$, which is compact by Hopf--Rinow.  By
\cite[Theorem~1]{AzagraSmooth}, we can choose a smooth function $d_i$ with
\[
 |d_i-\dist(\mathord\cdot,K_i)|<1.
\]
For a regular value $a_i\in(3r_i,4r_i)$, let
$V_i=\{d_i\leq a_i\}$.  The uniform error and Hopf--Rinow show that
$V_i$ is compact. For large $i$ it lies in $X^\circ$, contains $K_i$
in its interior, and
\begin{equation}\label{eq:comparison-distance}
 \dist(K_i,\partial V_i)\geq3r_i-1.
\end{equation}
If $K_i$ is connected, we take the component of $V_i$ containing it.
For the second assertion,
$\int_{V_i}\Theta_i=\int_{X^\circ}\Theta_i\ne0$. We take a component
$M_i$ on which the integral is nonzero and choose $C_i$ to be a
compact smooth neighborhood of $K_i\cap M_i$ contained in its unit
neighborhood.  In either case $\partial M_i\ne\varnothing$, because a
component of $X^\circ$ cannot be compact.  Equation
\eqref{eq:comparison-distance} gives the conclusions, and in the
second case
$\dist(C_i,\partial M_i)\geq3r_i-2$.
\end{proof}

\begin{lemma}\label{lem:smooth-plateau}
Let $M$ be compact with boundary, let $C\Subset M^\circ$ be compact,
and let $0<R<\dist(C,\partial M)$ and $\eta>0$.  There is
$r\in C^\infty(M;[0,R])$ such that
\[
 r=0\ \text{near }C,
 \qquad r=R\ \text{near }\partial M,
 \qquad |dr|\leq1+\eta.
\]
\end{lemma}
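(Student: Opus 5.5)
The plan is to build $r$ by smoothing and truncating a rescaled distance function to $C$. Set $\rho=\dist(\cdot,C)$ on $M$; this is $1$-Lipschitz with respect to the intrinsic distance. Since $0<R<\dist(C,\partial M)$, I will fix small numbers $0<\epsilon<\epsilon'$ with $R+4\epsilon'<\dist(C,\partial M)$. The target is a smooth $1$-Lipschitz-up-to-$\eta$ function $\tilde\rho$ with $|\tilde\rho-\rho|<\epsilon$. The function $r$ will then be obtained by composing $\tilde\rho$ with a smooth monotone cutoff profile.

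\emph{Step 1: smoothing.} I would invoke \cite[Theorem~1]{AzagraSmooth} (already used in Lemma~\ref{lem:comparison-domains}), or a partition-of-unity convolution argument in finitely many charts of the compact manifold $M$. Either route gives $\tilde\rho\in C^\infty(M)$ with $|\tilde\rho-\rho|<\epsilon$ and $|d\tilde\rho|\leq1+\eta/2$. Near $\partial M$ I would first extend $M$ slightly to an open manifold, or use a collar, so the smoothing is defined up to the boundary. The Lipschitz constant is preserved up to an arbitrarily small loss by the standard mollification estimate. This step is the main technical point: the approximation must control the gradient, not just the sup norm, and it must be done on a manifold with boundary.

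\emph{Step 2: profile.} I choose a smooth nondecreasing $\phi:\mathbb R\to[0,R]$ with $\phi=0$ on $(-\infty,2\epsilon']$ and $\phi=R$ on $[R+3\epsilon',\infty)$. I also require $0\leq\phi'\leq(1+\eta)/(1+\eta/2)$. This is possible because the ramp has length $R+\epsilon'>R$: take a smoothed version of the linear ramp of slope $R/(R+\epsilon')<1$, whose smoothing can keep the slope below $(1+\eta)/(1+\eta/2)$ once $\epsilon'$ is small relative to $\eta$. Then I set $r=\phi\circ\tilde\rho$.

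\emph{Step 3: verification.}
\begin{itemize}
\item Near $C$ we have $\rho<\epsilon'$, so $\tilde\rho<2\epsilon'$ and hence $r=0$.
\item Near $\partial M$ we have $\rho>R+4\epsilon'-\epsilon'$, so $\tilde\rho>R+3\epsilon'$ and hence $r=R$; both conditions are open.
\item By the chain rule, $|dr|\leq\phi'\,|d\tilde\rho|\leq1+\eta$.
\item Smoothness and the range $[0,R]$ are immediate.
\end{itemize}
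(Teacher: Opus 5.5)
Your proposal is correct and is essentially the paper's argument: extend $M$ across an outward collar, smooth the distance to $C$ with \cite[Theorem~1]{AzagraSmooth} under Lipschitz control, and compose with a monotone profile. The only difference is bookkeeping. The paper truncates at $R$ before smoothing and spends the $\eta$-slack on the profile slope $(1+\eta)/(1+\epsilon)$, whereas you use the gap $\dist(C,\partial M)-R$ to keep the profile slope below $1$ and spend the slack on the smoothing.

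Two small fixes are needed. First, the function you actually smooth should be $\dist_{\widetilde M}(\cdot,C)$ on the extended manifold. It is $1$-Lipschitz there, and it still exceeds $R+4\epsilon'$ on $\partial M$, because any path from $C$ must reach $\partial M$ while staying in $M$. Second, near $\partial M$ you should use a neighborhood on which $\rho>R+3\epsilon'+\epsilon$, which exists since $\epsilon<\epsilon'$. The bound $\rho>R+3\epsilon'$ alone only gives $\tilde\rho>R+3\epsilon'-\epsilon$.
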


\begin{proof}
We attach an outward collar to $M$ and extend the metric to a complete
metric on the resulting boundaryless manifold $\widetilde M$.  Set
\[
 f=\min\{\dist_{\widetilde M}(\mathord\cdot,C),R\}.
\]
This function is $1$-Lipschitz, vanishes on $C$, and equals $R$ on a
neighborhood of $\partial M$, since
$\dist(C,\partial M)>R$ and distance from $C$ is continuous.  Choose
$\epsilon>0$ so small that
\[
 \frac{(R-4\epsilon)(1+\eta)}{1+\epsilon}>R.
\]
We now apply \cite[Theorem~1]{AzagraSmooth}, with approximation and Lipschitz
errors smaller than $\epsilon$, to obtain a smooth $r_0$ with
$|r_0-f|<\epsilon$ and $\operatorname{Lip}(r_0)<1+\epsilon$.
Choose a smooth nondecreasing $\lambda:\mathbb R\to[0,R]$, equal to
$0$ on $(-\infty,2\epsilon]$ and to $R$ on
$[R-2\epsilon,\infty)$, with
\[
 0\leq\lambda'\leq\frac{1+\eta}{1+\epsilon}.
\]
Then $r=(\lambda\circ r_0)|_M$ has the stated properties.
\end{proof}


\vspace{0.5cm}

\section{The covers used in the relative cohomology case}
\label{app:common-cover}

We first enlarge the maps $T^{s_a}_\gamma$ to groups $G_a$ and
use the identifications over the components of $M\setminus C$ to
construct a common countable group $Q$.  We then construct the two
principal $Q$-bundles and the bundle maps required in
Lemma~\ref{lem:H-coverings}.

On a lifted exterior component, the two systems differ by a
constant phase, which may depend on the component.  The group
$\Lambda$ below contains the phases arising from composition and
from the stabilizers of the chosen lifted components. The
construction of $Q$ allows the two systems to be identified along
all these stabilizers.

\begin{proof}[Proof of Lemma~\ref{lem:H-coverings}]
Fix $i$ and $s$. Set $s_1=s$, $s_0=0$, and use $a=1,0$ to index
the two systems. We suppress the index $i$ and write
\[
 \begin{gathered}
 (M,C,Y,q,\omega,\alpha)=(M_i,C_i,Y_i,q_i,\omega_i,\alpha_i),\\
 (L_{s_a},T^{s_a}_\gamma)=(L_{s_a,i},T^{s_a,i}_\gamma),
 \qquad \Gamma=\Deck(Y/M).
 \end{gathered}
\]
Let $N_1,\ldots,N_r$ be the components of $M\setminus C$.
There are finitely many: $M\setminus\operatorname{int}C$ is a compact
smooth manifold with boundary, and removing $\partial C$ does not
change its components. Also $r\geq1$, since $C\Subset M^\circ$ and
$\partial M\ne\varnothing$.

\emph{Proof of \ref{H:group}.}
The spin lifts inducing
$\widehat T_\gamma\widehat T_\eta$ and
$\widehat T_{\gamma\eta}$ induce the same map of oriented
orthonormal-frame bundles.
Since $\ker(\operatorname{Spin}(n)\to SO(n))=\{\pm1\}$ and $Y$ is connected,
\[
 \widehat T_\gamma\widehat T_\eta=\pm\widehat T_{\gamma\eta},
\]
with a sign independent of $y$. Moreover,
\[
\begin{aligned}
 d(f_\eta+f_\gamma\circ\eta-f_{\gamma\eta})
 &=(\eta^*\alpha-\alpha)+\eta^*(\gamma^*\alpha-\alpha)
   -((\gamma\eta)^*\alpha-\alpha)\\
 &=0.
\end{aligned}
\]
Consequently, for $u\otimes v\in(S_Y\otimes L_{s_a})_y$,
\begin{equation}\label{eq:appendix-tau}
\begin{aligned}
 T^{s_a}_\gamma T^{s_a}_\eta(u\otimes v)
 &=\widehat T_\gamma\widehat T_\eta u
   \otimes e^{2\pi i s_a(f_\eta(y)+f_\gamma(\eta y))}v\\
 &=\pm e^{2\pi i s_a(f_\eta(y)+f_\gamma(\eta y)-f_{\gamma\eta}(y))}
   T^{s_a}_{\gamma\eta}(u\otimes v).
\end{aligned}
\end{equation}
The sign is the one displayed above, and the exponential is constant
on $Y$, but need not belong to $\{\pm1\}$.

Choose one component $\widetilde N_j$ of $q^{-1}(N_j)$ and set
\[
 H_j=\{h\in\Gamma:h\widetilde N_j=\widetilde N_j\}.
\]
Equation~\eqref{eq:H-component-phase} reads
\begin{equation}\label{eq:appendix-exterior-phase}
 T_h^s=e^{2\pi isf_h|_{\widetilde N_j}}T_h^0
 \quad\text{on }\widetilde N_j,\qquad h\in H_j.
\end{equation}
Let $\Lambda\subset U(1)$ be the subgroup generated by $-1$ and
the numbers
\[
\begin{gathered}
 e^{2\pi i s_a(f_\eta(y)+f_\gamma(\eta y)-f_{\gamma\eta}(y))}
 \qquad(a=1,0,\ \gamma,\eta\in\Gamma),\\
 e^{2\pi isf_h|_{\widetilde N_j}}
 \qquad(1\leq j\leq r,\ h\in H_j).
\end{gathered}
\]
The first expression is independent of $y$ by the calculation above,
and the second is constant by \eqref{eq:H-component-phase}.
Since $\Gamma\cong\pi_1(M)$ is countable and $r$ is finite,
$\Lambda$ is countable. Regard all groups below as discrete.
For $a=1,0$, define the group of bundle maps
\[
 G_a=\{\lambda T^{s_a}_\gamma:\lambda\in\Lambda,\ \gamma\in\Gamma\},
\]
with multiplication given by composition.
Equation~\eqref{eq:appendix-tau}, including $\eta=\gamma^{-1}$,
proves closure under multiplication and inverses.
Each element has a unique displayed expression, and
\[
 p_a:G_a\longrightarrow\Gamma,\qquad
 p_a(\lambda T^{s_a}_\gamma)=\gamma
\]
is onto with central kernel $\Lambda$, represented by the maps $\lambda I$.

Define
\[
 \phi_j:p_0^{-1}(H_j)\longrightarrow p_1^{-1}(H_j),
 \qquad
 \phi_j(\lambda T_h^0)
 =\lambda e^{-2\pi isf_h|_{\widetilde N_j}}T_h^s.
\]
By \eqref{eq:appendix-exterior-phase},
\[
 \phi_j(\lambda T_h^0)|_{\widetilde N_j}
 =\lambda e^{-2\pi isf_h|_{\widetilde N_j}}
       T_h^s|_{\widetilde N_j}
 =\lambda T_h^0|_{\widetilde N_j}.
\]
Restriction to $\widetilde N_j$ determines a map in either
$p_a^{-1}(H_j)$: its base map determines $h$, and its value in one
fiber determines $\lambda$.
Thus $\phi_j$ preserves products. Its displayed formula is bijective,
so it is an isomorphism.

To identify the corresponding subgroups via $\phi_j$ by conjugation,
while keeping $G_0$ and $G_1$ embedded, we consider the graph of groups with two vertices carrying separate
copies of $G_0$ and $G_1$,  and $r$ edges joining
them.  The group assigned to the $j$-th edge is $p_0^{-1}(H_j)$,
with injective maps
\[
 G_0 \xleftarrow{\;\mathrm{inclusion}\;} p_0^{-1}(H_j)
 \xrightarrow{\;\phi_j\;} G_1.
\]
Choose the first edge as a maximal tree.  By the definition in
\cite[Chapter~I, \S5.1]{SerreTrees}, the fundamental group
$\mathcal L$ has the presentation
\begin{equation}\label{eq:appendix-group}
 \mathcal L=\left\langle G_1,G_0,e_1,\ldots,e_r\ \middle|\
 e_j\phi_j(g)e_j^{-1}=g\ (g\in p_0^{-1}(H_j)),\
 e_1=1\right\rangle.
\end{equation}
Here the $e_j$ are additional generators, and all multiplication
relations within $G_0$ and $G_1$ are included.
By \cite[Chapter~I, \S5.2, Theorem~11 and Corollary~3]{SerreTrees},
the canonical maps $G_0\to\mathcal L$ and $G_1\to\mathcal L$
are injective.
Since $e_1=1$ and $\phi_j(\lambda I)=\lambda I$, their copies of
$\Lambda$ coincide, and
\[
 e_j(\lambda I)e_j^{-1}=\lambda I
 \qquad(\lambda\in\Lambda).
\]
This copy of $\Lambda$ is therefore central in $\mathcal L$. Set
\[
 Q=\mathcal L/\Lambda.
\]
For each $a=1,0$,
\[
 \ker(G_a\longrightarrow Q)=\Lambda,\qquad G_a/\Lambda\cong\Gamma.
\]
Both $G_a$ are countable, so the presentation
\eqref{eq:appendix-group} makes $\mathcal L$ and its quotient $Q$
countable. Take $Q_i=Q$.

\emph{Proof of \ref{H:covers}.}
For $a=1,0$, define $\pi_a:Z_a\to M$ by
\[
 Z_a=(\mathcal L\times Y)/G_a,\qquad
 (\ell g,y)\sim(\ell,p_a(g)y),\qquad
 \pi_a[\ell,y]_a=q(y).
\]
Choose representatives $R_a\subset\mathcal L$ for $\mathcal L/G_a$.
The set $R_a$ is countable because $\mathcal L$ is countable.
Every $\ell\in\mathcal L$ has a unique expression $\ell=rg$ with
$r\in R_a$ and $g\in G_a$, and $[\ell,y]_a=[r,p_a(g)y]_a$. Thus
\[
 R_a\times Y\longrightarrow Z_a,\qquad
 (r,y)\longmapsto[r,y]_a
\]
is a bijection.  We give $Z_a$ the smooth structure of this countable
disjoint union of copies of $Y$.  Since
$\pi_a[r,y]_a=q(y)$, the map $\pi_a$ is a smooth covering of $M$.

Left multiplication on the first factor defines an action of
$\mathcal L$ on $Z_a$, since
\[
 [\ell'\ell g,y]_a=[\ell'\ell,p_a(g)y]_a,
 \qquad g\in G_a.
\]
For $\lambda\in\Lambda$, centrality and $p_a(\lambda I)=1$ give
\[
 [\lambda\ell,y]_a=[\ell\lambda,y]_a=[\ell,y]_a.
\]
Thus the action factors through $Q$.  It is free, since
\[
 [\ell'\ell,y]_a=[\ell,y]_a
 \ \Longrightarrow\
 \ell'\in\ell\ker(p_a)\ell^{-1}=\Lambda
 \ \Longrightarrow\ \ell'\Lambda=1\quad\text{in }Q.
\]
It is transitive on every fiber: if $y'=\gamma y$, the class of
$\ell'T^{s_a}_\gamma\ell^{-1}$ in $Q$ sends $[\ell,y]_a$ to
$[\ell',y']_a$. Thus $\pi_a$ is a principal
$Q$-bundle. The image under $y\mapsto[1,y]_a$ of a relatively compact
measurable fundamental domain for $\Gamma$ on $Y$ is one for $Q$ on
$Z_a$. Take $Z_{a,i}=Z_a$ and $\pi_{a,i}=\pi_a$.

\emph{Proof of \ref{H:local}.}
For each $r\in R_a$, the map $y\mapsto[r,y]_a$ is a diffeomorphism
from $Y$ onto one connected component of $Z_a$. We transport the spin
structure of $Y$ to this component. Let $\alpha_a\in\Omega^1(Z_a)$ be the one-form whose pullback under $y\mapsto[r,y]_a$ is $\alpha$ for every $r\in R_a$.
Define
\[
 E_a=Z_a\times\mathbb C,
 \qquad
 \nabla^{E_a}=d-2\pi i s_a\alpha_a.
\]
On each copy of $Y$, this bundle and connection are
$(L_{s_a},\nabla^{L_{s_a}})$.

The maps $u\otimes v\mapsto[r,u\otimes v]_a$ over $[r,y]_a$ give
a unitary bundle isomorphism
\begin{equation}\label{eq:appendix-full-bundle}
 S_{Z_a}\otimes E_a
 \cong(\mathcal L\times(S_Y\otimes L_{s_a}))/G_a,
 \qquad [\ell g,w]_a=[\ell,gw]_a.
\end{equation}
Indeed, every $\ell\in\mathcal L$ has a unique expression $\ell=rg$
with $r\in R_a$ and $g\in G_a$, and $[\ell,w]_a=[r,gw]_a$.
For $g=\lambda T^{s_a}_\gamma$ and
$u\otimes v\in(S_Y\otimes L_{s_a})_y$, the action is
\begin{equation}\label{eq:appendix-factor}
 g(u\otimes v)
 =\widehat T_\gamma u\otimes
   \lambda e^{2\pi i s_a f_\gamma(y)}v.
\end{equation}
Since these maps preserve the tensor-product connection,
the isomorphism in \eqref{eq:appendix-full-bundle} preserves
connections.

\emph{Proof of \ref{H:Q-maps}.}
Choose one representative $\kappa(\theta)\in\mathcal L$ of each
$\theta\in Q$, using the same choice for $a=1,0$.  Define
\[
 T^a_\theta[\ell,w]_a=[\kappa(\theta)\ell,w]_a.
\]
This is well-defined because, for $g\in G_a$,
$[\kappa(\theta)\ell g,w]_a=[\kappa(\theta)\ell,gw]_a$.
Since $\theta[\ell,y]_a=[\kappa(\theta)\ell,y]_a$,
$\operatorname{pr}\circ T^a_\theta=\theta\circ\operatorname{pr}$.
For every $\ell\in\mathcal L$, the map
$w\mapsto[\ell,w]_a$ identifies $S_Y\otimes L_{s_a}$ with
the restriction of $S_{Z_a}\otimes E_a$ to the component
$\{[\ell,y]_a:y\in Y\}$.  It preserves the Hermitian metric,
chirality, Clifford multiplication, and connection by
\eqref{eq:appendix-factor} and Lemma~\ref{lem:H-twists}.
Under the two maps
\[
 w\longmapsto[\ell,w]_a,
 \qquad
 w\longmapsto[\kappa(\theta)\ell,w]_a,
\]
the map $T_\theta^a$ is the identity on $S_Y\otimes L_{s_a}$.
Thus it is unitary and satisfies \eqref{eq:T-properties}.

\emph{Proof of \ref{H:outside-maps}.}
Every point of $\pi_a^{-1}(N_j)$ has a representative
$[\ell,y]_a$ with $y\in\widetilde N_j$, and two such
representatives differ by an element of $p_a^{-1}(H_j)$.
Define
\begin{equation}\label{eq:appendix-exterior-maps}
\begin{aligned}
 \Phi_j:\pi_0^{-1}(N_j)&\longrightarrow\pi_1^{-1}(N_j),\\
 \Phi_j[\ell,y]_0&=[\ell e_j,y]_1.
\end{aligned}
\end{equation}
For $g\in p_0^{-1}(H_j)$, \eqref{eq:appendix-group} gives
$ge_j=e_j\phi_j(g)$, and
\[
 [\ell ge_j,y]_1
 =[\ell e_j\phi_j(g),y]_1
 =[\ell e_j,p_0(g)y]_1.
\]
Thus $\Phi_j$ is independent of the representative. Its inverse
sends $[\ell,y]_1$ to $[\ell e_j^{-1},y]_0$, and
$\pi_1\Phi_j[\ell,y]_0=q(y)=\pi_0[\ell,y]_0$.
Since $g_{Z_a}=\pi_a^*g$,
\[
 \Phi_j^*g_{Z_1}
 =\Phi_j^*(\pi_1^*g)
 =(\pi_1\circ\Phi_j)^*g
 =\pi_0^*g=g_{Z_0}.
\]
Thus $\Phi_j$ is an isometry. Since $\pi_0$ and $\pi_1$ preserve
orientation and $\pi_1\circ\Phi_j=\pi_0$, it preserves orientation.

For each connected component $D$ of $\pi_0^{-1}(N_j)$, choose
$\ell\in\mathcal L$ with
$D=\{[\ell,y]_0:y\in\widetilde N_j\}$.
The restriction $\Phi_j|_D$ extends to
\[
 [\ell,y]_0\longmapsto[\ell e_j,y]_1,\qquad y\in Y.
\]
This extension is again an orientation-preserving isometry between
the corresponding copies of $Y$.
These two components are simply connected, so their spin
structures are unique up to isomorphism, as in the proof of
Lemma~\ref{lem:H-twists}. Choose a spin lift of this isometry
and restrict it to $D$. For all such $D$, these restrictions define
$\widehat\Phi_j$ on $\pi_0^{-1}(N_j)$, with
$\operatorname{pr}_{S_{Z_1}}\widehat\Phi_j
=\Phi_j\operatorname{pr}_{S_{Z_0}}$.
It is unitary and preserves chirality, Clifford multiplication,
and the spin connection.

To construct $U_j^E$, define
\[
\begin{aligned}
 U_j:(S_{Z_0}\otimes E_0)|_{\pi_0^{-1}(N_j)}
 &\longrightarrow(S_{Z_1}\otimes E_1)|_{\pi_1^{-1}(N_j)},\\
 U_j[\ell,w]_0&=[\ell e_j,w]_1,
\end{aligned}
\]
where $w\in(S_Y\otimes L_0)_y$ and $y\in\widetilde N_j$.
For $g\in p_0^{-1}(H_j)$, the formula for $\phi_j$ gives
\[
 [\ell ge_j,w]_1
 =[\ell e_j\phi_j(g),w]_1
 =[\ell e_j,\phi_j(g)w]_1
 =[\ell e_j,gw]_1,
\]
so $U_j$ is well-defined. Under the maps
$w\mapsto[\ell,w]_0$ and $w\mapsto[\ell e_j,w]_1$
from the proof of \ref{H:Q-maps}, $U_j$ is the identity on
$S_Y\otimes\mathbb C$. Since $\alpha=0$ on $\widetilde N_j$,
both line connections there are $d$. Thus $U_j$ is smooth
and unitary and preserves chirality, Clifford multiplication,
and connection.

After pulling back the target bundles by $\Phi_j$, the map
$(\widehat\Phi_j^{-1}\otimes1)U_j$ takes values in
$S_{Z_0}\otimes\Phi_j^*E_1$ and commutes with every
Clifford multiplication. Since $n$ is even, Clifford products
span all endomorphisms of the complex spinor fiber
\cite[Chapter~I, Theorem~4.3]{LawsonMichelsohn}. Consequently,
for the chosen $\widehat\Phi_j$, there is a unique smooth
unitary line-bundle map $U_j^E$ on $\pi_0^{-1}(N_j)$ such that
$U_j=\widehat\Phi_j\otimes U_j^E$ and
$\operatorname{pr}_{E_1}U_j^E=\Phi_j\operatorname{pr}_{E_0}$.
In particular,
\begin{equation}\label{eq:appendix-tensor-map}
 (\widehat\Phi_j\otimes U_j^E)[\ell,w]_0
 =U_j[\ell,w]_0=[\ell e_j,w]_1.
\end{equation}
Since $U_j$ preserves the tensor-product connection and
$\widehat\Phi_j$ preserves the spin connection, $U_j^E$ is parallel.

Finally, for $\theta\in Q$,
\[
\begin{aligned}
 \Phi_j(\theta[\ell,y]_0)
 =\Phi_j[\kappa(\theta)\ell,y]_0
 =[\kappa(\theta)\ell e_j,y]_1
  =\theta\Phi_j[\ell,y]_0.
\end{aligned}
\]
The definition in \ref{H:Q-maps} and
\eqref{eq:appendix-tensor-map} give
\[
\begin{aligned}
 T^1_\theta U_j[\ell,w]_0
 &=T^1_\theta[\ell e_j,w]_1\\
 &=[\kappa(\theta)\ell e_j,w]_1\\
 &=U_j[\kappa(\theta)\ell,w]_0
  =U_jT^0_\theta[\ell,w]_0.
\end{aligned}
\]
Since $U_j=\widehat\Phi_j\otimes U_j^E$, this is
\[
 T^1_\theta(\widehat\Phi_j\otimes U_j^E)
 =(\widehat\Phi_j\otimes U_j^E)T^0_\theta.
\]
Assembling these maps over $N_1,\ldots,N_r$ gives
$\Phi_i$, $\widehat\Phi_i$, and $U_i^E$, and proves
\eqref{eq:H-Q-equality}.

\emph{Proof of \ref{H:forms-on-base}.}
Every component of $Z_a$ has the original line connection on $Y$, so
\[
 R^{E_a}=-2\pi i s_a\,\pi_a^*\omega,\qquad
 \ch(E_a,\nabla^{E_a})=\pi_a^*e^{s_a\omega}.
\]
For $a=1,0$ these are the two identities in \ref{H:forms-on-base}.
\end{proof}



\vspace{0.5cm}



\begin{thebibliography}{99}

\bibitem{AzagraSmooth}
D. Azagra, J. Ferrera, F. L\'opez-Mesas, and Y. Rangel,
\emph{Smooth approximation of Lipschitz functions on Riemannian manifolds},
J. Math. Anal. Appl. \textbf{326} (2007), no.~2, 1370--1378,

\bibitem{BaerBandaraGeneral}
C. B\"ar and L. Bandara,
\emph{Boundary value problems for general first-order elliptic differential
operators},
J. Funct. Anal. \textbf{282} (2022), no.~12, Paper No.~109445,

\bibitem{BaerBandara}
C. B\"ar and L. Bandara,
\emph{First-order elliptic boundary value problems on manifolds with
noncompact boundary},
Math. Ann. \textbf{393} (2025), 2953--3023,

\bibitem{BourguignonGauduchon}
J.-P. Bourguignon and P. Gauduchon,
\emph{Spineurs, op\'erateurs de Dirac et variations de m\'etriques},
Comm. Math. Phys. \textbf{144} (1992), no.~3, 581--599,

\bibitem{BGV}
N. Berline, E. Getzler, and M. Vergne,
\emph{Heat Kernels and Dirac Operators},
Grundlehren der mathematischen Wissenschaften \textbf{298},
Springer-Verlag, Berlin, 1992; corrected reprint, 2004,

\bibitem{BaerHankeKCowaist}
C. B\"ar and B. Hanke,
\emph{$K$-cowaist of manifolds with boundary},
C. R. Math. Acad. Sci. Paris \textbf{362} (2024), 1349--1356,

\bibitem{BottTu}
R. Bott and L.~W. Tu,
\emph{Differential Forms in Algebraic Topology},
Graduate Texts in Mathematics \textbf{82}, Springer-Verlag,
New York, 1982,

\bibitem{CecchiniLongNeck}
S. Cecchini,
\emph{A long neck principle for Riemannian spin manifolds with positive
scalar curvature},
Geom. Funct. Anal. \textbf{30} (2020), no.~5, 1183--1223,

\bibitem{CecchiniZeidlerCallias}
S. Cecchini and R. Zeidler,
\emph{Scalar curvature and generalized Callias operators},
in M. Gromov and H.~B. Lawson, Jr. (eds.),
\emph{Perspectives in Scalar Curvature}, Vol.~1,
World Scientific, Hackensack, NJ, 2023, pp.~515--542,

\bibitem{CecchiniZeidlerComparison}
S. Cecchini and R. Zeidler,
\emph{Scalar and mean curvature comparison via the Dirac operator},
Geom. Topol. \textbf{28} (2024), no.~3, 1167--1212,

\bibitem{GromovLawsonComplete}
M. Gromov and H.~B. Lawson, Jr.,
\emph{Positive scalar curvature and the Dirac operator on complete
Riemannian manifolds},
Inst. Hautes \`Etudes Sci. Publ. Math. \textbf{58} (1983), 83--196,


\bibitem{GromovMetricInequalities}
M. Gromov,
\emph{Metric inequalities with scalar curvature},
Geom. Funct. Anal. \textbf{28} (2018), no.~3, 645--726,

\bibitem{GromovFourLectures}
M. Gromov,
\emph{Four lectures on scalar curvature},
in M. Gromov and H.~B. Lawson, Jr. (eds.),
\emph{Perspectives in Scalar Curvature}, Vol.~1,
World Scientific, Hackensack, NJ, 2023, pp.~1--514;


\bibitem{GuoXieYuQuantitative}
H. Guo, Z. Xie, and G. Yu,
\emph{Quantitative $K$-theory, positive scalar curvature, and bandwidth},
in M. Gromov and H.~B. Lawson, Jr. (eds.),
\emph{Perspectives in Scalar Curvature}, Vol.~2,
World Scientific, Hackensack, NJ, 2023, pp.~763--798,


\bibitem{HormanderI}
L. H\"ormander,
\emph{The Analysis of Linear Partial Differential Operators I:
Distribution Theory and Fourier Analysis},
Classics in Mathematics, Springer-Verlag, Berlin, 2003,

\bibitem{LawsonMichelsohn}
H.~B. Lawson, Jr. and M.-L. Michelsohn,
\emph{Spin Geometry}, Princeton Mathematical Series 38,
Princeton University Press, Princeton, NJ, 1989.

\bibitem{RoeOpenIndex}
J. Roe,
\emph{An index theorem on open manifolds. I},
J. Differential Geom. \textbf{27} (1988), no.~1, 87--113,

\bibitem{SerreTrees}
J.-P. Serre,
\emph{Trees},
Springer Monographs in Mathematics, corrected second printing,
Springer-Verlag, Berlin, 2003.

\bibitem{ZeidlerBandWidth}
R. Zeidler,
\emph{Band width estimates via the Dirac operator},
J. Differential Geom. \textbf{122} (2022), no.~1, 155--183,


\bibitem{ZhangDeformedDirac}
W. Zhang,
\emph{Deformed Dirac operators and scalar curvature},
in M. Gromov and H.~B. Lawson, Jr. (eds.),
\emph{Perspectives in Scalar Curvature}, Vol.~2,
World Scientific, Hackensack, NJ, 2023, pp.~201--214,

\end{thebibliography}
\end{document}